\algnewcommand\algorithmicinput{\textbf{Input: }}
\algnewcommand\INPUT{\State\algorithmicinput}
\algnewcommand\algorithmicinitialize{\textbf{Initialize: }}
\algnewcommand\INIT{\State\algorithmicinitialize}
\algnewcommand\algorithmicrun{\textbf{Run: }}
\algnewcommand\RUN{\State\algorithmicrun}
\algnewcommand\algorithmicupdate{\textbf{Update: }}
\algnewcommand\UPDATE{\State\algorithmicupdate}
\algnewcommand\algorithmicset{\textbf{Set: }}
\algnewcommand\SET{\State\algorithmicset}
\algnewcommand\algorithmicquery{\textbf{Query: }}
\algnewcommand\QUERY{\State\algorithmicquery}
\algnewcommand\algorithmicoutput{\textbf{Output: }}
\algnewcommand\OUTPUT{\State\algorithmicoutput}
\newtheoremstyle{norm}
{12pt}
{12pt}
{}
{}
{\bf}
{:}
{.5em}
{}
\newtheorem{thm}{Theorem}[section]
\newtheorem*{thm*}{Theorem}
\newtheorem*{clm*}{Claim}
\newtheorem*{conj*}{Conjecture}
\newtheorem*{lem*}{Lemma}
\theoremstyle{norm}
\newtheorem{prb}[thm]{Problem}%[section]
\newtheorem*{prb*}{Problem}
\newtheorem*{ax*}{Axiom}
\newtheorem*{df*}{Definition}
\newtheorem*{ex*}{Example}
\newtheorem{expl}[thm]{Exploration}%prb
\newtheorem*{pos*}{Postulate}
\newtheorem*{pr*}{Proposition}
\newtheorem*{qu*}{Question}
\newtheorem{rem}[thm]{Remark}
\newtheorem*{rem*}{Remark}
\tikzstyle{prbox} = [draw=black, fill=blue!20, very thick,
\tikzstyle{thbox} = [draw=black,double, fill=blue!10, very thick,
\tikzstyle{cpbox} = [drop shadow={
\tikzstyle{wrbox} = [drop shadow={
\tikzstyle{hnbox} = [draw=black, fill=white, very thick,
\newcommand{\cB}[0]{\mathcal{B}}
\newcommand{\cC}[0]{\mathcal{C}}
\newcommand{\E}[0]{\mathbb{E}}
\newcommand{\Pj}[0]{\mathbb{P}}
\newcommand{\R}[0]{\mathbb{R}}
\newcommand{\ga}[0]{\gamma}
\newcommand{\de}[0]{\delta}
\newcommand{\ep}[0]{\varepsilon}
\newcommand{\Te}[0]{\Theta}
\newcommand{\Om}[0]{\Omega}
\newcommand{\si}[0]{\sigma}
\newcommand{\nin}[0]{\not\in}
\newcommand{\opl}[0]{\oplus}
\newcommand{\subeq}[0]{\subseteq}
\newcommand{\supeq}[0]{\supseteq}
\newcommand{\bs}[0]{\backslash}
\newcommand{\iy}[0]{\infty}
\newcommand{\rc}[1]{\frac{1}{#1}}
\newcommand{\prc}[1]{\pa{\rc{#1}}}
\newcommand{\fc}[2]{\frac{#1}{#2}}
\newcommand{\pf}[2]{\pa{\frac{#1}{#2}}}%Shortcut for fraction with parentheses
\newcommand{\ba}[1]{\left[ {#1} \right]}
\newcommand{\ce}[1]{\left\lceil {#1}\right\rceil}
\newcommand{\fl}[1]{\left\lfloor {#1}\right\rfloor}
\newcommand{\pa}[1]{\left( {#1} \right)}
\newcommand{\set}[2]{\left\{{#1}:{#2}\right\}}
\newcommand{\wt}[1]{\widetilde{#1}}
\newcommand{\poly}{\operatorname{poly}}
\providecommand{\cal}[1]{\mathcal{#1}}
\renewcommand{\cal}[1]{\mathcal{#1}}
\newcommand{\pull}[9]{
#1\ar@/_/[ddr]_{#2} \ar@{.>}[rd]^{#3} \ar@/^/[rrd]^{#4} & &\\
& #5\ar[r]^{#6}\ar[d]^{#8} &#7\ar[d]^{#9} \\}
\newcommand{\cmp}[9]{
\xymatrix{
#1 \ar[r]^{#4}{#5} \ar@/_2pc/[rr]^{#8}_{#9} & #2 \ar[r]^{#6}_{#7} & #3
}
}
\newcommand{\ha}[1]{\ar@{^(->}[#1]}
\newcommand{\ls}[1]{\ar@{-}[#1]}
\newcommand{\sj}[1]{\ar@{->>}[#1]}
\newcommand{\aq}[1]{\ar@{=}[#1]}
\newcommand{\acir}[1]{\ar@{}[#1]|-{\textstyle{\circlearrowright}}}
\newcommand{\acil}[1]{\ar@{}[#1]|-{\textstyle{\circlearrowleft}}}
\newcommand{\ard}[1]{\ar@{.>}[#1]}
\newcommand{\mt}[1]{\ar@{|->}[#1]}
\newcommand{\inm}[1]{\ar@{}[#1]|-{\in}}
\newcommand{\inr}{\ar@{}[d]|-{\rotatebox[origin=c]{-90}{$\in$}}}
\newcommand{\inl}{\ar@{}[u]|-{\rotatebox[origin=c]{90}{$\in$}}}
\newcommand{\sumo}[2]{\sum_{#1=1}^{#2}}
\newcommand{\prodo}[2]{\prod_{#1=1}^{#2}}
\newcommand{\beq}[1]{\begin{equation}\llabel{#1}}
\newcommand{\eeq}[0]{\end{equation}}
\newcommand{\bal}[0]{\begin{align*}}
\newcommand{\eal}[0]{\end{align*}}%this doesn't work; i don't know why
\newcommand{\ban}[0]{\begin{align}}
\newcommand{\ean}[0]{\end{align}}
\newcommand{\fixme}[1]{{\color{red}#1}}
\newcommand{\llabel}[1]{\label{#1}\text{\fixme{\tiny#1}}}
\newcommand{\arxiv}[1]{\url{http://www.arxiv.org/abs/#1}}
\newcommand{\vocab}[1]{\emph{#1}} %also index automatically.
\DeclareFontFamily{U}{wncy}{}
    \DeclareFontShape{U}{wncy}{m}{n}{<->wncyr10}{}
    \DeclareSymbolFont{mcy}{U}{wncy}{m}{n}
    \DeclareMathSymbol{\Sh}{\mathord}{mcy}{"58} 
\newcommand{\M}{\mathcal{M}}
\newcommand{\Mnn}{\mathcal{M}_{\mathsf{AT}}}
\newcommand{\Mwe}{\mathcal{M}_{\mathsf{WE}}}
\newcommand{\blocksigma}{\widehat{\sigma}}
\newcommand{\Tmix}{T_{\text{mix}}}
\newcommand{\eps}{\varepsilon}
\newcommand{\loc}[0]{\text{-}\mathsf{Loc}}
\newcommand{\vell}[0]{\vec{\ell}}
\newcommand{\vloc}[0]{\vell\loc}
\newcommand{\tv}{{\mathrm{TV}}}
\newtheorem{theorem}{Theorem}[section]
\newtheorem{lemma}[theorem]{Lemma}
\newtheorem{proposition}[theorem]{Proposition}
\newtheorem{claim}[theorem]{Claim}
\newtheorem{corollary}[theorem]{Corollary}
\newtheorem{remark}[theorem]{Remark}
\newtheorem{definition}[theorem]{Definition}
\newtheorem{observation}[theorem]{Observation}
\title{Mixing of general biased adjacent transposition chains}
\author{Reza Gheissari\thanks{Department of Mathematics, Northwestern University. Email: \url{gheissari@northwestern.edu}} , Holden Lee\thanks{Department of Applied Mathematics and Statistics, Johns Hopkins University. Email: \url{hlee283@jhu.edu}} , and Eric Vigoda\thanks{Department of Computer Science, University of California, Santa Barbara. Email: \url{vigoda@ucsb.edu}}}
\date{November 4, 2025}
\begin{document}
\maketitle

\begin{abstract}
    We analyze the general biased adjacent transposition shuffle process, which is a well-studied Markov chain on the symmetric group $S_n$.  In each step, an adjacent pair of elements $i$ and $j$ are chosen, and then $i$ is placed ahead of $j$ with probability $p_{ij}$.  This Markov chain arises in the study of self-organizing lists in theoretical computer science, and has close connections to exclusion processes from statistical physics and probability theory.    
    Fill (2003) conjectured that for general $p_{ij}$ satisfying $p_{ij} \ge 1/2$ for all $i<j$ and a simple monotonicity condition, the mixing time is polynomial. 
    We prove that for any fixed $\varepsilon>0$, as long as $p_{ij} >1/2+\varepsilon$ for all $i<j$, the mixing time is $\Theta(n^2)$ and exhibits pre-cutoff.
    Our key technical result is a form of spatial mixing for the general biased transposition chain after a suitable burn-in period. In order to use this for a mixing time bound, we adapt multiscale arguments for mixing times from the setting of spin systems to the symmetric group. 
\end{abstract}

\section{Introduction}

We study a natural Markov chain on biased permutations which is motivated by the study of self-organizing data structures, and is a natural generalization of fundamental stochastic processes in probability theory and statistical mechanics.   The Markov chain is defined on the set of $n!$ permutations weighted by given pairwise probabilities indicating the preference for the pairwise ordering.

For integer $n\geq 1$, let $[n]=\{1,\dots,n\}$, and let $S_n$ denote the symmetric group; thus, $S_n$ denotes the set of $n!$ permutations of the $n$ particles $\{1,\dots,n\}$. 
For a permutation $\si\in S_n$ and an index $i\in [n]$, let $\si(i)$ denote the particle in position $i$ in $\sigma$, and let $\sigma^{-1}(i)$ denote the position in $\sigma$ of the $i$-th particle.
We denote an adjacent transposition as follows: for a permutation $\si\in S_n$ and an index $i\in [n-1]$, let $\si\circ(i,i+1)$ denote the permutation obtained by swapping the particles at positions $i$ and $i+1$.

We consider the adjacent transposition Markov chain, denoted $\Mnn$, on $S_n$ weighted by given parameters $p_{i,j}$ for all $i<j$, where $0\leq p_{i,j}\leq 1$.  Given the input probabilities $p_{i,j}$ for all $i<j$, then for all $j>i$ set $p_{j,i} = 1-p_{i,j}$, and let $\vec{p}=(p_{i,j})_{1\leq i,j\leq n}$ denote the collection of probabilities.
From a state $\si_t\in S_n$, the transitions $\si_t\rightarrow\si_{t+1}$ of the Markov chain $\Mnn$ are the following: 
\begin{itemize}
\item Choose an index $i\in [n-1]$ uniformly at random.
    \item Let 
    \[  \sigma_{t+1} =
    \begin{cases} \si_t \circ (i,i+1)  & \mbox{ with probability } p_{\si_t(i+1),\si_t(i)}
    \\
\si_t & \mbox{ with probability } p_{\si_t(i),\si_t(i+1)}
\end{cases}\,.
\]
\end{itemize}

The Markov chain $\Mnn$ 
is time-reversible with unique stationary distribution $\mu$ given by
\begin{align}\label{eq:stationary-distribution}
\mu(\si) = \mu_{\vec{p},[n]}(\si)= \rc Z\prod_{1\le i<j\le n} p_{\si(i),\si(j)},
\end{align}
where the partition function is defined as 
$Z = \sum_{\si\in S_n} \prod_{i<j} p_{\si(i),\si(j)}$. It is an open problem to give an efficient algorithm under general conditions on $(p_{i,j})_{1\le i,j\le n}$ for approximate sampling from this distribution, or by the standard reduction between counting and sampling, for approximation of the partition function. 
One case where this is known is when $p_{i,j} = \fc{w_i}{w_i+w_j}$ with $w_1\le w_2\le \cdots \le w_n$. In this case, the partition function is a permanent of a matrix, which is efficiently approximable \cite{jerrum1989approximating,jerrum2004polynomial}. However, this relies on specific structure on $p_{i,j}$. One motivation for studying $\Mnn$ is that it provides a generic algorithm that does not rely on specific structure on $p_{i,j}$.

The efficiency of using a Markov chain for approximate sampling is quantified by its mixing time. 
To be precise, for an ergodic Markov chain $(X_t)$ on state space~$\Omega$ with transition matrix $P$ and unique stationary distribution $\mu$, the {\em mixing time} (to $\delta<1/2$) is
\begin{align}\label{eq:tmix} \Tmix(\delta) = \min_t\max_{X_0\in\Omega} \|P^t(X_0,\cdot)-\mu\|_{\tv}\leq \delta\,,
\end{align}
where $\|\mu - \nu \|_{\tv} = \frac{1}{2}\|\mu - \nu\|_1 $ is the total-variation distance between probability measures. 
When there is no $\delta$ argument, we use the convention $\Tmix=  \Tmix(1/4)$. 

A well-studied special case is when $p_{ij}=q$ for all $i<j$, for some constant $q$. The chain exhibits different behavior for $q=1/2$ and $q\ne 1/2$. 
When $q=1/2$, then the stationary distribution $\mu$ is uniform over all $n!$ permutations and the Markov chain $\Mnn$ is the unbiased adjacent transposition chain, which is well-studied in probability theory; see~\cite{Aldous,DiaconisSaloffCoste}. In this setting, Wilson~\cite{Wilson} proved that the mixing time to $\delta$ is $\Theta(n^3\log{n})$, and Lacoin~\cite{LacoinCutoffSSEP} established the cutoff phenomenon, identifying the constant in front of $n^3 \log n$ and that showing it is independent of $\delta$. 
The constant bias case, when $q\in (\frac{1}{2},1)$, is closely related to the asymmetric simple exclusion process (ASEP) from interacting particle systems (see the original paper of~\cite{SPITZER1970} and the book of Liggett~\cite{liggett1985interacting}), with deep connections to statistical mechanics and the KPZ universality class (see e.g.,~\cite{krapivsky2010kinetic} and~\cite{corwin2016kardar}). In this setting, mixing is faster: Benjamini, Berger, Hoffman, and Mossel~\cite{benjamini2005mixing} proved that the mixing time is $\Theta(n^2)$, and Labbe and Lacoin~\cite{LabbeLacoinCutoffASEP} established cutoff.

An additional special case of particular interest corresponds to 
the problem of linear extensions of a partial order.  In this setting, some subset of pairs have $p_{ij}=1$ (and thus require $\sigma^{-1}(i)<\sigma^{-1}(j)$) and all other pairs have $p_{ij}=1/2$ (and hence are unbiased). For this problem, Bubley and Dyer~\cite{BubleyDyer} proved that the mixing time is $O(n^3)$. 

The general version of the Markov chain $\Mnn$ has been studied in the context of self-organizing lists, where the frequency that elements are accessed is according to an underlying distribution, and elements are moved towards the front of the list when accessed to minimize the expected access time.  In that context, the chain $\Mnn$ corresponds to the Move-Ahead-One (MA1) list update algorithm, and is important in the probabilistic analysis of its stationary behavior; we refer the interested reader to Hendricks~\cite{Hendricks} and Hester and Hirschberg~\cite{HesterHirschberg} for an introduction to this area, and to Fill~\cite{Fill} for further connections.

With no conditions on $\vec{p}$, it is easy to construct slow-mixing examples, and therefore the following two conditions on the bias parameters $\vec{p}=(p_{i,j})_{1\le i<j\le n}$ have become important for analysis the general setting:
\begin{enumerate}
        \item $\ep$-positive bias: For all $1\le i<j\le n$,     $\fc{p_{i,j}}{p_{j,i}} \ge 1+\ep$.
    \item Monotonicity: For all $i<j$, $p_{i,j}\le p_{i,j+1}$ and for all $i+1<j$, $p_{i,j}\ge p_{i+1,j}$. 
\end{enumerate}

A well-known and long-standing conjecture of Fill~\cite{Fill}
is that the Markov chain $\Mnn$ has polynomial mixing time whenever $\vec p$ satisfies $0$-positive bias and monotonicity. 
Monotonicity is necessary: Bhakta, Miracle, Randall and Streib \cite{BMRS} provided an example where the mixing time is exponential in $n$ and where $\vec p$ satisfies the $0$-positive bias condition but not the monotonicity condition. 

In the positive direction, progress towards obtaining polynomial mixing time bounds for positively biased chains have relied on various special conditions on $\vec{p}$
(either assuming some algebraic relations, or some block structure across which the $p_{ij}$'s are constant).  Bhakta et al.~\cite{BMRS} proved polynomial mixing time in two restricted settings: in the first case, for $i<j$, $p_{i,j}$ only depends on $i$; in the second setting, the probabilities $p_{i,j}$ are defined based on a binary tree.  

Haddadan and Winkler~\cite{HaddadanWinkler} studied a simpler chain known as the gladiator chain where each particle has a strength $s(i)$ and the transposition of particles $i$ and $j$ is a fixed function of $s(i)$ and $s(j)$, and proved polynomial mixing when there are $k=3$ different available particle strengths.
Miracle and Streib~\cite{MiracleStreib} considered the setting where the $n$ particles are partitioned into $k$ classes $\cC_1,\dots,\cC_k$ and the particles within the same class are indistinguishable; they a proved mixing time of $O(n^{2k+6})$.
Finally, Miracle, Streib, and Streib~\cite{MSS} also considered the setting of $k$ classes, and proved that if the inter-class biases satisfy $\eps$-positive bias and all classes have size $|\cC_i|\geq C(\eps)\log{n}$ for some constant $C(\eps)$, then the mixing time of $\Mnn$ is $O(n^{9})$ (independent of $k$).

We prove the following general result: for every $\eps>0$ and any $\vec{p}$ which satisfies $\eps$-positive bias, the process $\Mnn$ has mixing time of $\Theta(n^2)$, where the constant in the upper bound depends on $\eps$.
We do not require monotonicity, and note that our result does not contradict the slow mixing example of \cite[Section 6]{BMRS-arxiv} without monotonicity as they have $\eps\approx 1/n$.

\begin{theorem}\label{thm:main}
    For every $\ep>0$ and any fixed $\delta>0$, there exists $C(\ep)$ such that for any $(p_{i,j})_{1\le i< j\le n}$ which satisfies $\ep$-positive bias, the adjacent transposition Markov chain $\Mnn$ has mixing time\footnote{Here we are using $\widetilde O$ to mean up to polylogarithmic factors in $n$; the implicit constant here may, and in fact does, depend on $\eps$. However, the constant in the $\widetilde O$ is independent of $\delta$.} 
    \[ n^2 \le \Tmix (\delta) \le C(\eps) n^2 + \widetilde O(n \log (1/\delta)).
    \] 
    In particular, the family has \emph{pre-cutoff} (see~\cite[Chapter 18]{LP}): there exists $C(\eps)$ such that for any sequence of $\Mnn^{(n)}$ with $\eps$-positively biased $\vec{p}\,^{(n)}$, for any fixed $\delta \in (0,\frac{1}{2})$, 
    \begin{align*}
         \limsup_{n\to\infty}  \frac{\Tmix(\delta)}{\Tmix(1-\delta) } \le C(\eps) <\infty \,.
    \end{align*}
\end{theorem}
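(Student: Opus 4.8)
The plan is to prove the two bounds separately; pre-cutoff is then a formal consequence. For the \emph{lower bound}, take the worst-case start $\sigma_0$ in which particle $1$ occupies position $n$, and track $f(\sigma)=\sigma^{-1}(1)$. A step of $\Mnn$ moves $f$ by at most $1$, and only when one of the (at most two) edges adjacent to particle $1$'s current position is selected, an event of probability $\le 2/(n-1)$; so a Chernoff bound gives $f(\sigma_t)\ge 2n/3$ with probability $1-o(1)$ whenever $t\le cn^2$ for a small absolute constant $c$. Meanwhile $\eps$-positive bias yields the geometric tail $\mu(\sigma^{-1}(1)\ge k)\le C_\eps(1+\eps)^{-k}$ --- moving particle $1$ one step toward the front multiplies the $\mu$-weight by a factor $\ge 1+\eps$, injectively --- so under $\mu$ one has $f=O_\eps(\log n)\ll 2n/3$ with probability $1-o(1)$. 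Hence $\|P^t(\sigma_0,\cdot)-\mu\|_{\tv}=1-o(1)$ for all $t\le cn^2$, giving $\Tmix(1-\delta)\ge cn^2$ for every fixed $\delta<1/2$; sharpening the constant (e.g.\ by a Wilson-type approximate-eigenfunction argument exploiting the $\Theta(1/n^2)$ spectral gap, or by tracking the number of inversions starting from the reversal permutation) gives the stated $\Tmix(\delta)\ge n^2$.

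For the \emph{upper bound}, the plan splits into two phases. Phase one is a \emph{burn-in} of length $T_1=C_1(\eps)n^2$ bringing the chain, from an arbitrary start, to within $\delta/3$ in total variation of a measure supported on \emph{regular} configurations: those whose coarse-grained particle profile matches the deterministic profile of $\mu$, and in which, at every scale above $\polylog(n)$, no particle lies more than $\polylog(n)$ from its $\mu$-typical location. Since the monotonicity condition is not assumed, a grand coupling is unavailable, so I would instead run a drift argument: a Lyapunov functional such as $\Phi(\sigma)=\sum_a \mathrm{dist}\bigl(\sigma^{-1}(a),\ \text{typical position of }a\bigr)$ --- or, more robustly, an area functional of the lattice-path encoding of $\sigma$ --- contracts in expectation at rate $\Omega_\eps(1/n^2)$ per step from every state, and a concentration estimate, supplemented by censoring and comparison to the constant-bias chains whose equilibration is understood, upgrades the resulting coarse control to regularity at all scales.

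Phase two, the technical heart, is a \emph{spatial mixing} property for $\mu$ and for the chain launched from a regular configuration: conditioned on the multiset of particles occupying a window $W$ of positions and on the exterior arrangement, the induced law inside $W$ is a biased permutation with inherited parameters, and the influence on any function supported deep in $W$ of the exterior and of the boundary data on $\partial W$ decays exponentially in the distance to $\partial W$ with correlation length $O(1/\eps)$ --- and this decay persists when $W$ is run under the dynamics, so that resampling $W$ for $\widetilde O_\eps(n)$ further steps equilibrates it essentially independently of its surroundings. Granting this, I would close with a multiscale recursion of the kind used to bound mixing times of spin systems: decompose $[n]$ into overlapping blocks of geometrically growing sizes $\ell_0<2\ell_0<\cdots<n$ with $\ell_0=\polylog(n)$, and show that from a regular configuration the local laws at scale $2\ell$ equilibrate after $\widetilde O_\eps(n)$ steps, with an error controlled by the scale-$\ell$ error plus a spatial-mixing term that is summable over the $O(\log n)$ scales; the base case holds because from a regular configuration only $\polylog(n)$-scale rearrangements remain, each needing $\widetilde O_\eps(n)$ chain steps (a local walk active only when one of its $\le 2$ edges is picked). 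This yields that $\widetilde O_\eps(n)$ steps past the burn-in bring the chain to total-variation distance $\le 1/4$, so $\Tmix(1/4)\le C(\eps)n^2$; for general $\delta$, a contraction/coupling argument valid within the regular regime gives effective relaxation time $\widetilde O_\eps(n)$ there, contributing the additive $\widetilde O(n\log(1/\delta))$ term, and combining with $\Tmix(1-\delta)\ge cn^2$ gives $\limsup_n \Tmix(\delta)/\Tmix(1-\delta)\le C(\eps)/c<\infty$, i.e.\ pre-cutoff.

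The hardest step will be the spatial mixing statement for \emph{arbitrary} $\eps$-positively biased $\vec p$. Without monotonicity the usual FKG/monotone-coupling route to decay of correlations is closed off; $\mu$ is not a finite-range Gibbs measure in any standard sense; and one needs the correlation length controlled uniformly in $\vec p$ and, crucially, the decay to survive being propagated by the dynamics --- a dynamic, not merely static, estimate. A second, structural difficulty is that ``locality'' on $S_n$ is unusual: conditioning on a window of positions conditions on \emph{which} particles lie there, so the block-dynamics recursion must be engineered to carry particle-content information across scales, and one must show the deterministic macroscopic profile is rigid enough that the $\polylog(n)$-scale fluctuations it constrains are genuinely the only remaining obstruction to mixing.
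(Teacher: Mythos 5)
Your high-level architecture matches the paper almost exactly: a lower bound by tracking the location of particle $1$ from the fully reversed start, and an upper bound split into (i) an $O_\eps(n^2)$ burn-in to a set of ``regular'' (the paper's $O(\log n)$-localized) configurations, (ii) a spatial-mixing statement for $\mu$ conditioned on such configurations, and (iii) a multiscale block-dynamics recursion. You have also correctly noticed the crucial structural point that without monotonicity, spatial mixing cannot hold for arbitrary boundary data, only for localized ones, and that conditioning on the particle multiset in a window yields (after relabeling) another $\eps$-positively biased instance --- both of which are exactly what the paper establishes in \Cref{lem:spatial-mixing} and \Cref{l:relabeling}.

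However, both central technical ingredients are left without a mechanism, and you say so yourself; this is where the proposal is genuinely incomplete rather than merely sketchy. For spatial mixing, the paper's entire engine is the notion of a \emph{disconnecting point}: a position $k$ with $\sigma([k])=[k]$ as a set, so that the laws to the left and right of $k$ become conditionally independent given the set partition. \Cref{lem:prob-of-good-point} shows that, under $\ell$-localization, every length-$\ell_{\max}$ window contains a disconnecting point with probability $e^{-O(1/\eps)}$ (proved via the geometric-tail estimate of \Cref{l:stoch-dom-geo}), which yields the coupling in \Cref{lem:spatial-mixing} by revealing from the outside inward until two chains share a disconnecting point on each side. Without this idea, or some replacement, the exponential decay of correlations you posit remains an assumption, and the multiscale recursion does not get off the ground. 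For burn-in, your proposed Lyapunov-function contraction ``from every state'' is precisely what is hard to establish in the non-monotone setting; the paper sidesteps this entirely by showing the indicator process $\eta_k(\sigma_t)=\mathbf 1\{\sigma_t(\cdot)\le k\}$ is stochastically to the left of a $k$-particle $q$-ASEP for every $k$ simultaneously (\Cref{lem:domination-by-ASEP}), reducing burn-in to known ASEP mixing and tail bounds. You gesture at ``comparison to constant-bias chains'' but only as a supplement to the unproven drift estimate. Finally, your lower bound as written (bounding the move probability by $2/(n-1)$) only gives $\Tmix\ge cn^2$ for some $c<1/2$; the paper gets the sharp $(1-\eta)n^2$ by noting that only the \emph{left} edge (probability $1/(n-1)$) can move particle $1$ forward and dominating by a one-sided $\text{Ber}(1/(n-1))$ walk --- a fix you could make directly rather than resorting to Wilson's method.

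Also note a quantitative overclaim: the spatial-mixing correlation length in the paper is $C'(\eps)\ell_{\max}$, which at the relevant localization scale $\ell_0=O(\log n/\eps)$ is polylogarithmic in $n$, not $O(1/\eps)$; this is why the post-burn-in mixing contributes $\widetilde O(n)$ rather than $O(n)$, and why the boosting blocks in \Cref{lem:fast-mixing-within-ell-loc} must be taken of size $\text{poly}(\log n)$.
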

Note this implies that the spectral gap of the chain is at least  $\Omega(C(\ep)^{-1}n^{-2})$, due to a standard inequality between mixing and relaxation time \cite[Theorem 12.5]{LP}.
It is natural to wonder whether the limit of $\Tmix/n^2$ can be pinned down, thereby showing cutoff. However, among $\eps$-positively biased $\vec{p}$, the mixing time can truly differ by order $n^2$. 
In particular, the totally asymmetric case where $p_{i,j}=1$ for all $i<j$ saturates the lower bound of Theorem~\ref{thm:main}, and the constant-bias case $p_{i,j}=q$ with $q/(1-q)=1+\eps$ for all $i<j$ saturates the upper bound with our constant $C(\eps)$. 
Therefore, any refinement into a cutoff result would require assuming a further consistency relation between the sequences $\vec{p}^{(n)}$. 

\begin{remark}\label{rem:dependence-on-epsilon}
For fixed $\eps>0$ and $n$ sufficiently large (in a way that depends on $\eps$), the constant $C(\eps)$ in Theorem~\ref{thm:main} behaves like $C(\eps) = O(1/\eps)$. On the other hand, for all $\eps,n$ we give an upper bound of $e^{\tilde O_\eps(1/\eps^2)}\text{poly}(n)$ where $\tilde O_\eps$ hides poly-logarithmic factors in $1/\eps$ (see Proposition~\ref{prop:polynomial-mixing-time}). There is a lower bound of $e^{\Omega(1/\eps)}$ that can be constructed as follows: for $n_0 = \Theta(1/\eps)$, use the slow mixing example of~\cite[Section 6]{BMRS-arxiv} on $1\le i,j\le n_0$ and for all $i,j$ that are not both in $[n_0]$, set $p_{ij} = \mathbf{1}_{i<j}$. Closing this gap would be interesting. We also note that with the added assumption of monotonicity, one would expect a $\text{poly}(1/\eps)$ dependence on $\eps$ to hold for all $\eps,n$, consistent with the conjecture of~\cite{Fill}. 
\end{remark}

\section{Proof overview}
In many of the previous results where polynomial bounds are known under special settings on the bias parameters $\vec{p}$, there is some underlying algebraic structure (e.g., bijections to Markov chains on up-down walks or Dyck paths) that is used as an initial ingredient into the mixing time analysis. As a notable example, in the homogenous case of $p_{i,j}=q$ for all $i<j$, the process which tracks the locations (but not the labels) of the first $k$ particles is Markovian, and is exactly a $k$-particle ASEP. 
With general biased $\vec{p}$, we lose these exact connections and we instead import tools from spin systems to our analysis. In particular, our overall strategy is to do a multi-scale reduction of the mixing time for $\Mnn$ on $[n]$ to that on $[\alpha n]$ for $\alpha <1$ and recurse. 

The typical way such reductions are achieved in spin system analysis is through decay of correlations between the configuration $\sigma$ on far apart regions on $[n]$. In our setting, decay of correlations does \emph{not} hold for general pinnings of the configuration, but does hold for a family of typical ones which we call $\ell$-localized: roughly speaking, this means that for all $i$, particle~$i$ is within distance $\ell$ of location $i$. 

Our strategy is captured by the following three stages: 
\begin{enumerate}
    \item Burn-in: After $O(n^2)$ steps, regardless of the initialization, the configuration is $O(\log n)$-localized, and stays so for polynomially many steps. 
    \item Polynomial bound by spatial recursion: We show that within $O(\log n)$-localized configurations, there is near-exponential decay of correlations; this property is then used together with a multi-scale block dynamics recursion to obtain $n^{O(1)}$ mixing time. 
    \item Boosting to quadratic mixing time: We use a second block dynamics reduction within the set of $O(\log n)$-localized configurations to boost the polynomial mixing time to obtain mixing in just $O(n(\log{n})^{O(1)})$ steps beyond the initial $O(n^2)$ length burn-in. 
\end{enumerate}
We describe each of these steps in more detail in the remainder of this section. In our proof overview, we will state lemmas in the form needed for establishing the polynomial upper bound. As we will describe when outlining the boosting argument, all of these steps are needed conditional on a more general form of $\ell$-localization for the boosting to optimal quadratic mixing time. Therefore, throughout the proof sketch, we will accompany lemmas with pointers to later in the paper for their more general, conditional versions.

\subsection{Burn-in to localized configurations}

Since we assume positive bias, the most likely configuration, known as the ground state, is the permutation with the particles in order $1,\dots,n$. 
We define the following set of configurations, which we refer to as $\ell$-localized, abbreviated $\ell\loc$, in which no particle deviates by more than $\ell$ positions from its ground state location.  

\begin{definition}\label{defn:localized}
For an integer $\ell\geq 0$, we say a permutation $\sigma\in S_n$ is $\ell$-\emph{localized} if the following holds:
\begin{align*}
    \si\in \ell\loc := \bigcap_{1\le k\le n} \{|\sigma^{-1}(k) - k| \le \ell\}\,.
\end{align*}
\end{definition}

We will show that in the stationary distribution $\mu$, the property $\ell_0\loc$ holds with high probability (at least $1-1/\poly(n)$) for $\ell_0 = C_0 \log n$. 

\begin{restatable}{lemma}{stationaryhighprob}
\label{lem:stationary-highprob}
There exists a constant $C_0= C_0(\eps) = O(1/\eps)$ such that the following holds:
\[
\mu((C_0\log{n})\loc)\geq 1-n^{-10}.
\]
\end{restatable}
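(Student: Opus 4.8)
The plan is to show that under stationarity, each particle $k$ is with overwhelming probability within $O(\log n)$ of its ground-state position, and then to union-bound over all $k$. Fix $k$ and a deviation threshold $\ell$; we want to bound $\mu(|\sigma^{-1}(k)-k|>\ell)$. Because of the $\eps$-positive bias, each ``inversion'' in a permutation costs a multiplicative factor of at least $(1+\eps)$ in the unnormalized weight $\prod_{i<j}p_{\sigma(i),\sigma(j)}$ relative to a configuration without that inversion; symmetrically, $p_{i,j}\le 1$ always. The key point is that moving particle $k$ far from position $k$ forces many inversions. More precisely, I would set up a comparison/injection argument: given a configuration $\sigma$ in which $\sigma^{-1}(k)=k+m$ for some $m>0$ (the case $m<0$ is symmetric), consider the map that slides particle $k$ back toward position $k$ by one step at a time. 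Each such elementary slide across a particle $k'$ with $k'<k$ strictly decreases the number of (value-)inversions and multiplies the weight by $p_{k,k'}/p_{k',k}\ge 1+\eps$, while slides across $k'>k$ leave the weight unchanged or increase it. Among positions $k+1,\dots,k+m$, at least $m - (n-k)$... — more cleanly, among any $m$ positions to the right of $k$, all but at most the $n-k$ particles with larger label must have smaller label, so sliding $k$ leftward past position $k+1,\dots,k+m$ encounters at least $\max(0, m-(n-k))$...

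Actually the cleanest formulation avoids case analysis on $n-k$: I would instead count inversions \emph{globally}. If $\sigma^{-1}(k)=k+m$, I claim $\sigma$ has at least $m$ more inversions than it ``needs'': indeed, consider the $k$ particles with labels $\le k$. In the ground state they occupy positions $1,\dots,k$. If particle $k$ sits at position $k+m$, then by pigeonhole at least $m$ of the positions $1,\dots,k$ are occupied by particles with label $>k$, and each such particle forms an inversion with particle $k$. Hence the weight of $\sigma$ is at most $(1+\eps)^{-m}$ times the weight of the configuration $\sigma'$ obtained from $\sigma$ by the leftward slide of particle $k$ to position $k$ — and this slide is a well-defined injection (it fixes all particles other than $k$ in relative order), with bounded fan-in. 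Therefore
\[
\mu(\sigma^{-1}(k)=k+m)\le (1+\eps)^{-m}\,\mu(\{\sigma': \sigma'^{-1}(k)=k\})\le (1+\eps)^{-m}.
\]
Summing over $m$ and over the symmetric left-deviations, $\mu(|\sigma^{-1}(k)-k|>\ell)\le \frac{2(1+\eps)^{-\ell}}{1-(1+\eps)^{-1}} = O(\eps^{-1})(1+\eps)^{-\ell}$. Taking $\ell = \ell_0 = C_0\log n$ with $C_0 = C_0(\eps) = \Theta(1/\log(1+\eps)) = \Theta(1/\eps)$ (for small $\eps$) makes this at most $n^{-11}$, and a union bound over the $n$ choices of $k$ gives $\mu((\ell_0\loc)^c)\le n^{-10}$, which is the claim.

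The main obstacle I anticipate is making the ``leftward slide'' map genuinely injective with controlled fan-in while tracking the weight change exactly — one has to be careful that sliding particle $k$ past a particle $k'>k$ does not \emph{decrease} the weight (it does not, since $p_{k,k'}\le 1$ and we only need an upper bound on $\mu(\sigma)$, so any weight increase along the slide is harmless), and that the image configurations for distinct starting positions $k+m$ are distinguishable (they are: the preimage of a configuration with particle $k$ at position $k$, under ``insert particle $k$ at position $k+m$,'' is unique once $m$ is specified, so the map restricted to $\{\sigma^{-1}(k)=k+m\}$ is injective for each fixed $m$). An alternative, perhaps cleaner, route that sidesteps explicit injections: use the fact (or derive it) that under $\mu$ the law of $\sigma^{-1}(k)$ is stochastically dominated in the appropriate sense so that $\Pr_\mu[\sigma^{-1}(k)\ge k+m]\le (1+\eps)^{-m}$ follows from a one-step exchange argument — swap the particle at position $k+m$ with the smallest-labelled particle to its left among positions $\le k$ that has label $>k$. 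I expect the injection bookkeeping to be the only place requiring genuine care; everything else is the geometric-sum-plus-union-bound calculation sketched above, with the constant $C_0 = O(1/\eps)$ emerging from $1/\log(1+\eps)$.
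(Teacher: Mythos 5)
Your high-level plan (show each particle's stationary deviation from its ground-state position has an exponential tail, then union-bound over $k$) matches the paper's, and your anticipated constant $C_0 = O(1/\eps)$ is correct. But the central injection step as you have written it is wrong, and the flaw is not the one you flagged.

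First a small bookkeeping error: if $\sigma^{-1}(k)=k+m$, pigeonhole does \emph{not} give $m$ positions in $[1,k]$ with labels $>k$ — particles $1,\dots,k-1$ could all sit in positions $1,\dots,k-1$, giving only one such position. What pigeonhole does give is that among positions $1,\dots,k+m-1$ at least $m$ carry a label $>k$, hence at least $m$ inversions involving particle $k$.

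The genuine gap is that your single-particle slide of $k$ from position $k+m$ back to $k$ touches only the $m$ positions $k,\dots,k+m-1$, and the $m$ inversions just located need not live there. When the slide crosses a particle $k'<k$, the factor is $p_{k,k'}/p_{k',k}\le (1+\eps)^{-1}$, so the weight \emph{decreases} — the opposite of what you need. You worried about the $k'>k$ crossings, which are in fact the harmless ones. A concrete counterexample: take $\sigma=(3,1,2,4)$ with $k=2$, $m=1$. Your slide produces $\sigma'=(3,2,1,4)$, which has one \emph{more} inversion than $\sigma$, so $\mu(\sigma)/\mu(\sigma') = 1+\eps > 1$, whereas you need $\mu(\sigma)/\mu(\sigma')\le(1+\eps)^{-1}$. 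So the term-by-term weight comparison underlying your injection fails. (The intermediate inequality $\mu(\sigma^{-1}(k)=k+m)\le(1+\eps)^{-m}$ may still be true after averaging, but the slide does not prove it.)

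The paper avoids this trap by sliding a \emph{set} of particles rather than a single one. In the proof of Lemma~\ref{l:stoch-dom-geo}, the map $\Phi$ takes every configuration with $\sigma(1)\notin[k]$ and slides \emph{all} particles with label $\le k$ leftward simultaneously by one step at a time; each such unit slide moves each of the $k$ small-label particles past a large-label particle, so the weight increases by $(1+\eps)^k$ per step, with no dangerous crossings. This yields the clean bound $\mu(\sigma(1)\notin[k]\mid\vloc)\le(1+\eps)^{-k}$. The paper then inverts this via Corollary~\ref{cor:particle-location-stoch-dom-geo}, runs a coupon-collector decomposition $\sigma^{-1}(k)\le\sum_{s=1}^k Y_s$ with $Y_s$ stochastically dominated by independent geometrics with success probabilities $1-(1+\eps)^{-(k+1-s)}$ (using the self-reducibility Lemma~\ref{l:relabeling} to re-condition after each reveal), and applies the sub-exponential tail bound (Lemma~\ref{lem:geometric-sum-tail}) to that sum. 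That decomposition is the step your sketch is missing; the ``one-step exchange'' alternative you mention at the end does not by itself give a geometric tail.
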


\Cref{lem:stationary-highprob} is proved in \Cref{sec:spatial-mixing}.  The key step in the proof of \Cref{lem:stationary-highprob} is to show that for every $1\le k \le n$, under the stationary distribution, the distance of particle $k$ from location $k$ is stochastically dominated by an appropriate geometric random variable.
This follows from first showing that the identity of the particle in location 1 is stochastically dominated by a geometric random variable (Lemma~\ref{l:stoch-dom-geo}) and then applying this fact repeatedly after conditioning. 
Lemma~\ref{l:stoch-dom-geo} is shown by a many-to-one map between configurations that have none of the first $k$ particles in location 1, to those that do, noting that moving all these particles $r$ spaces to the left increases the probability weight by a factor of at least $(1+\eps)^{kr}$. 
This step is in the spirit of Peierls arguments from statistical physics. 

While the above results show useful properties of the stationary distribution, an additional challenge is showing that the dynamics quickly attains (and maintains) the typical stationary properties; this is often referred to as the burn-in phase of the dynamics. 
We bound the burn-in time in the following result. 

\begin{lemma}
\label{lem:single-site-burn-in-simpler}
For every $\eps$, there exists $C_0(\eps)>0$ such that for $\ell_0= C_0(\eps)\log{n}$ and any initial $\sigma_0\in S_n$, for any $t\ge C_0(\eps)\cdot n^2$,
        \[  \mathbb P({\sigma_t\in \ell_0\loc})\geq 1 - n^{-10}.
        \]
\end{lemma}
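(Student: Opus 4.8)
The plan is to prove Lemma~\ref{lem:single-site-burn-in-simpler} by a monotone coupling argument, using the fact (from positive bias) that the chain $\Mnn$ is stochastically dominated below by the ``worst case'' initialization, which is the reversed permutation $\sigma_0 = (n, n-1,\dots,1)$, or rather that we can track a ``height function'' / ``displacement function'' that is monotone under a global coupling. Concretely, for a permutation $\sigma$ and a cut $k \in \{1,\dots,n-1\}$, let $N_k(\sigma) = \#\{ i \le \sigma^{-1}(j) : j > k\}$ count the number of ``heavy'' particles (those with label $> k$) sitting in the first $\sigma^{-1}(\cdot)$ positions—more precisely, the number of particles with label in $\{k+1,\dots,n\}$ that lie to the left of position $k+1$, i.e. $N_k(\sigma) = \#\{ j > k : \sigma^{-1}(j) \le k\}$. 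The event $\sigma \in \ell_0\loc$ can be sandwiched between events about the $N_k$'s: if particle $m$ is at position $p$ with $p - m > \ell_0$, then for $k = p-1 \ge m + \ell_0$ we have at least $\ell_0$ heavy particles (labels $> k \ge m+\ell_0$... ) crammed into the first $p-1$ positions relative to what the ground state allows; making this bookkeeping precise, $\{\sigma \notin \ell_0\loc\} \subseteq \bigcup_k \{ |N_k(\sigma) - \E_\mu N_k| \text{ large}\}$ up to a union bound over the $n$ cuts, and similarly on the low side. So it suffices to show each $N_k(\sigma_t)$ is, with probability $1 - n^{-O(1)}$, within $O(\log n)$ of its stationary value after $t \ge C_0 n^2$ steps.

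Next I would control each $N_k(\sigma_t)$ individually. The key point is that for a fixed cut $k$, the process that only records, for each position, whether it holds a ``light'' particle (label $\le k$) or a ``heavy'' particle (label $>k$)—call it the \emph{projected ASEP at cut $k$}—is \emph{not} itself Markov for general $\vec p$ (the swap probabilities depend on the actual labels), but it \emph{is} stochastically dominated, via a monotone coupling, by a genuine biased exclusion process with bias parameter $q$ where $q/(1-q) = 1+\eps$: whenever a light particle sits immediately left of a heavy one, the true chain swaps them (moving the heavy one rightward, toward the ground state) with probability $p_{\text{light},\text{heavy}} \ge 1/2 + \eps'$ by $\eps$-positive bias since the light label is smaller; and in the other configuration it swaps with probability $\le 1/2$. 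Hence the number of heavy particles among the first $k$ positions, $N_k(\sigma_t)$, is stochastically below the corresponding count for the $q$-biased ASEP started from the all-heavy-on-the-left configuration. For the constant-bias ASEP, the relevant count concentrates around its (exponentially small tail) equilibrium value after $O(n^2)$ steps: this is exactly the content of the Benjamini–Berger–Hoffman–Mossel $\Theta(n^2)$ mixing bound \cite{benjamini2005mixing} together with the stationary tail bound for ASEP (the stationary measure of ASEP is a product-like measure with geometric tails, giving $N_k \le O(\log n)$ w.h.p.). A symmetric argument starting from the other extremal configuration gives the matching lower bound on $N_k(\sigma_t)$, pinning it to within $O(\log n)$ of $\E_\mu N_k$. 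Combined with Lemma~\ref{lem:stationary-highprob}, which tells us $\E_\mu N_k$ is itself $O(\log n)$, we get $N_k(\sigma_t) = O(\log n)$ w.h.p.\ for each $k$, and then a union bound over $k$ and the sandwiching above finishes the proof. Note we also need ``stays so for polynomially many steps'' (as flagged in the proof overview), which follows since once $\sigma_t \in \ell_0\loc$ holds with probability $\ge 1-n^{-10}$ at a single time, we can re-run the same argument (or use that $\mu(\ell_0\loc) \ge 1-n^{-10}$ and a union bound over a polynomial number of times, since the chain is already close to stationary by the polynomial mixing bound of Proposition~\ref{prop:polynomial-mixing-time})—though for the logically-clean version one would invoke only the monotone domination and not circularly the mixing bound.

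The main obstacle I anticipate is making the monotone/censoring coupling genuinely valid for \emph{general} $\vec p$ without monotonicity. The subtlety is that the projected process at cut $k$ is not Markov, so one cannot simply quote ASEP mixing for it; instead one must build a coupling on the full permutation chain $\Mnn$ that, step by step, keeps $N_k(\sigma_t)$ dominated by the count in an auxiliary true ASEP. The cleanest way is a \emph{grand coupling using monotonicity of the update rule in the partial order} where $\sigma \preceq \tau$ iff $N_k(\sigma) \ge N_k(\tau)$ simultaneously for all $k$ (the standard ``height function'' partial order on permutations); one checks that the single-update map $\sigma \mapsto \sigma_{t+1}$ built from shared randomness (the chosen index $i$ and a uniform $U \in [0,1]$) is monotone with respect to this order provided $\eps$-positive bias holds—this is where positive bias, and only positive bias, is used, and it is exactly the monotonicity property underlying the BBHM analysis. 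Then dominating the initial condition by the reversed permutation and the dynamics by constant-bias ASEP both become instances of this monotone coupling. A secondary technical point is the precise translation between ``$\sigma \notin \ell_0\loc$'' and deviations of the $N_k$'s: a single particle being far from home forces an $\Omega(\ell_0)$ deviation in \emph{some} $N_k$, and conversely; getting the constants right so that $\ell_0 = C_0(\eps)\log n$ with the same $C_0$ as in Lemma~\ref{lem:stationary-highprob} works is routine but needs care with the union bound over the $n$ cuts (hence the $n^{-10}$ rather than $n^{-O(1)}$—we take $C_0$ large enough to beat the union bound).
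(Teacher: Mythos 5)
Your core strategy — project onto light/heavy particles at each cut $k$, dominate the projection by a genuine $q$-ASEP, and then invoke the $O(n^2)$ ASEP mixing time from \cite{benjamini2005mixing} together with a geometric tail bound on the ASEP stationary distribution, before union-bounding over $k$ — is exactly the route the paper takes (Lemmas~\ref{lem:domination-by-ASEP}, \ref{lem:ASEP-mixing-time}, \ref{lem:ASEP-stationary-estimate}). Two of your specific technical claims, however, do not hold as stated, and both are things the paper's proof is structured to avoid.

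First, your reduction from $\{\sigma\notin\ell_0\loc\}$ to deviations of $N_k(\sigma)=\#\{j>k:\sigma^{-1}(j)\le k\}$ is incorrect. If particle $m$ sits at position $p>m+\ell_0$, then among positions $[1,p-1]$ there are at least $p-m$ particles with label $>m$, but there need be only \emph{one} with label $>p-1$; e.g.\ $\sigma=(1,\dots,m-1,m+1,\dots,m+\ell_0+1,m,m+\ell_0+2,\dots,n)$ has $\sigma^{-1}(m)-m=\ell_0+1$ yet $N_{p-1}=1$. The correct monotone functional to track is the \emph{rightmost position of a particle with label $\le k$}, i.e.\ $\max\{v:\eta_k(\sigma)(v)=1\}$, which directly upper-bounds $\sigma^{-1}(k)$ and is monotone under the partial order on $\binom{[n]}{k}$; that is what the paper uses via its tail bound on the rightmost ASEP particle.

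Second, the ``grand coupling'' on $S_n$ that you propose — a shared-randomness update map monotone in the order $\sigma\preceq\tau\iff N_k(\sigma)\ge N_k(\tau)$ for all $k$, used to dominate the initialization by the reversed permutation — does not go through for general $\eps$-positively biased $\vec p$ (without the monotonicity hypothesis on $\vec p$, which the paper deliberately drops). An adjacent-edge update at $i$ in two chains $\sigma,\tau$ touches particle pairs $(a,b)$ and $(c,d)$ whose only constraint from the partial order is global, not local, so there is no ordering between the swap probabilities $p_{b,a}$ and $p_{d,c}$ that would make the shared-$U$ update order-preserving. The paper sidesteps this entirely: it couples the permutation chain not to another permutation chain but to the $k$-particle $q$-ASEP $Y_t^{(k)}$, whose swap rule depends only on the $0$-$1$ projection and therefore compares cleanly (Observation~\ref{obs:ways-to-violate-monotonicity} and Lemma~\ref{lem:domination-by-ASEP}). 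Moreover, the paper never needs to dominate the initial condition at all: it starts $Y_0^{(k)}=\eta_k(\sigma_0)$ and uses that the ASEP mixing time bound is uniform over initializations, so after $O(n^2)$ steps $Y_t^{(k)}$ is close to stationarity regardless of where it started. If you replace your two claims with these two moves, the rest of your outline matches the paper's proof.
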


A more general version of \Cref{lem:single-site-burn-in-simpler} is stated in \Cref{lem:single-site-burn-in}, which is proved in \Cref{sec:burn-in}.
We also obtain a burn-in estimate of the form of Lemma~\ref{lem:single-site-burn-in-simpler} for a block dynamics variant of $\Mnn$, which we define in the following subsection; see Lemma~\ref{lem:restricted-block-dynamics-burn-in}.  

To prove \Cref{lem:single-site-burn-in-simpler}, we use a stochastic domination relation between ($k$-particle projections of the) $\Mnn$ and the asymmetric exclusion process (ASEP).  In ASEP, there are $k$ indistinguishable particles which lie at $k$ distinct locations on $[n]$, with a given bias parameter $q\in (1/2,1)$.  In each step, the ASEP chooses a random location $i\in [n-1]$, and if exactly one the positions $i$ or $i+1$ contains a particle, then with probability $q$ the particle is placed ahead at position $i$, and with probability $1-q$, it is placed behind in position $i+1$; thus, there is a bias $q>1/2$ for moving the particles to the left. For the $\Mnn$ chain with general $\vec{p}$, the  process $(\mathbf 1\{\sigma_t(i)\le k\})_i$ is not Markov, but it is stochastically ``to the left" of a $k$-particle ASEP with appropriate parameter $q$. These projections, together with known mixing time bounds of the ASEP, yield the bound on the burn-in phase as stated in \Cref{lem:single-site-burn-in-simpler}; see \Cref{sec:burn-in} for further details.

\subsection{Spatial mixing within the localized set}

The role played by the localized set of configurations is that within that set of configurations, the distribution $\mu$ satisfies a certain near-exponential decay of correlations. To formalize this, we introduce the following set notations. Throughout the paper, when we write an interval $[\ell,m]$ for integers $\ell,m$, we mean the set $\{\ell,\ell+1,...,m\}\subseteq [n]$. 

\begin{definition}
    For an integer $r\geq 0$ and integers $i,j\ge 1$, let $A=A_{i,j}=[i+1,n-j]$ and let $A_r$ denote the subset of $A$ which is distance at least $r$ from the boundary.  Formally, for $A=[i+1,n-j]$, define $A_r=[i+1+r,n-j-r]$. 
    Moreover, let $A^c=[n]\setminus A$ denote the complement of the region $A$.
    \end{definition}
    
    Our spatial mixing property says that conditioned on the property $\ell\loc$, if we consider two different configurations $\eta,\bar\eta$ on $A^c$ (compatible with $\ell\loc$), then the influence of $\eta,\bar\eta$ on the configuration on $A_r$ decays (almost) exponentially in $r$, the distance from $A_r$ to~$A^c$. In the following, we use the notation $\si(S)$ to denote the restriction of $\si$ to $S$; i.e., we consider the image as an ordered set.

\begin{restatable}{lemma}{spatialmixing}
\label{lem:spatial-mixing-simpler}
There exists a constant $C'= C'(\eps)>0$ such that  
for any integers $\ell\ge 1$, $i,j\ge 1$, and $0\leq r\leq (j-i-\ell-1)/2$, the following holds for any pair $\eta,\bar\eta\in \ell\loc$:
\[
\|
\mu\pa{\si({A_r})\in \cdot \mid \si({A^c}) = \eta,\si\in\ell\loc} - 
\mu({\bar\si({A_r})\in \cdot \mid   \bar\si({A^c}) =   \bar\eta,\bar\si\in\ell\loc})
\|_{\tv}  \le 
\exp(-r/C'\ell).
\]
\end{restatable}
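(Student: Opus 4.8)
The plan is to establish spatial mixing via a coupling that "resamples from the boundary inward," exploiting the fact that once we condition on $\ell$-localization, the boundary influence is screened by the particles that must sit near the interface. Recall $A = A_{i,j} = [i+1, n-j]$ and $A_r = [i+1+r, n-j-r]$. The crucial structural observation is that under $\ell\loc$, the only particles that can ever occupy a position in $A$ near its left endpoint $i+1$ are those with label in $[i+1-\ell, i+1+\ell]$ (and similarly near the right endpoint, labels in $[n-j-\ell, n-j+\ell]$). So the restriction $\sigma(A)$ and the "outside" restriction $\sigma(A^c)$ can only interact through the identity and relative order of $O(\ell)$ particles near each of the two endpoints of $A$. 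Moreover, the stationary measure $\mu$ conditioned on $\{\sigma(A^c) = \eta, \sigma \in \ell\loc\}$ is itself (up to the $\ell\loc$ indicator and the boundary coupling just described) a product-form weight $\prod_{i'<j' \in A} p_{\sigma(i'),\sigma(j')}$ on the rearrangements of the fixed label-set $\eta(A) := [n]\setminus\{\text{labels of }\eta\}$ within $A$ — i.e. a chain of the same type $\Mnn$ on a subinterval, with a boundary field.

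First I would reduce the two-configuration statement to a one-step "boundary perturbation" bound: write $\mu(\,\cdot \mid \eta, \ell\loc)$ and $\mu(\,\cdot \mid \bar\eta, \ell\loc)$ as the same conditional measure on $\sigma(A)$ up to changing the boundary condition at the two endpoints, and decompose the change from $\eta$ to $\bar\eta$ into a change only at the left boundary, then only at the right, so it suffices to bound the effect of changing the boundary condition at a single endpoint, say the left. Second, for a single-endpoint change I would set up a grand coupling of the two conditional measures, realized by running identical copies of $\Mnn$ restricted to $A$ (with the respective boundary fields) from stationarity; because the weight is multiplicative and the only difference between the two copies is the field coming from the $O(\ell)$ boundary particles at position $i+1$, a path/censoring or disagreement-percolation argument shows that a disagreement seeded at the left endpoint must propagate rightward one site at a time, and at each site it is damped by a factor bounded away from $1$ by a constant depending only on $\eps$ (this uses $\eps$-positive bias: a particle that "wants" to be to the left is present at position $x+1$ with probability at least $\Omega(\eps)$ more than its mirror image, so a coupling of the local move at the interface succeeds with probability $\ge c(\eps)$). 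Iterating across the $\Theta(r)$ sites between $i+1$ and $A_r$ — with the $\ell$ accounting for the width of the "bundle" of boundary-sensitive particles that must be cleared — gives a disagreement-survival probability $\le \exp(-c r/\ell)$, which is exactly the claimed $\exp(-r/C'\ell)$ after setting $C' = 1/c$. The constraint $r \le (j-i-\ell-1)/2$ is precisely what guarantees $A_r$ is nonempty and sits at distance $\ge r$ from \emph{both} endpoints, so the left-endpoint and right-endpoint perturbations are handled symmetrically and independently.

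The main obstacle I expect is making the "disagreement damps by a constant per site" step rigorous in the presence of the hard $\ell\loc$ constraint, which is a non-product event and can in principle create long-range rigidity: conditioning on $\ell\loc$ forbids certain local rearrangements, so a naive single-site coupling of the restricted chain need not be Markovian or monotone. I would handle this by working not site-by-site but block-by-block on scale $\Theta(\ell)$: partition the corridor between $i+1$ and $A_r$ into $\Theta(r/\ell)$ consecutive blocks of width $C''\ell$, observe that conditioning on the configuration outside a block and on $\ell\loc$ leaves a conditional law on the block that is again of product form with $O(\ell)$-particle boundary fields and for which $\ell\loc$ is automatically consistent, and then show a per-block contraction using the same $\eps$-positive-bias estimate that controls the identity of the leftmost particle (this is the same mechanism already used, per the overview, in Lemma~\ref{l:stoch-dom-geo} to get geometric stochastic domination). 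A secondary technical point is the burn-in / well-definedness issue — the conditional measures are only well-behaved because $\mu(\ell\loc \mid \eta)$ is bounded below for $\eta \in \ell\loc$, which follows from the Peierls-type estimate behind Lemma~\ref{lem:stationary-highprob}; I would invoke that to ensure all conditional probabilities appearing in the coupling are legitimate and the error terms from the $\ell\loc$ conditioning are absorbed into the constant $C'(\eps)$.
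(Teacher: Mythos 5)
The paper's proof hinges on a structural device you never identify: the \emph{disconnecting point}. Position $k$ is called disconnecting for $\si$ if $\si([k]) = [k]$ as unordered sets (Definition~\ref{def:disconnecting}). The crucial observation is that if $k$ is disconnecting for \emph{both} $\si$ and $\bar\si$, then even though the orderings of $[k]$ within $[k]$ may differ, the conditional law of $\si$ and of $\bar\si$ on positions $[k]^c$ is \emph{exactly} the same measure, because both are $\mu$ conditioned on the identical constraint that particles $[k]$ occupy positions $[k]$. This gives exact screening, not approximate contraction. The paper then shows (Lemma~\ref{lem:prob-of-good-point}, via Lemma~\ref{l:stoch-dom-geo}) that each block of width $\ell_{\max}$ yields a fresh independent attempt, with success probability $e^{-O(1/\eps)}$, at producing a simultaneous disconnecting point, and a sequential revealing argument finishes the proof.

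Your proposal instead invokes a ``disagreement percolation / per-block contraction'' mechanism, and this is where the gap lies. For disagreement percolation one needs a clean site-local notion of agreement and a coupling under which disagreement is a subcritical process. Here the two conditionings $\eta$ and $\bar\eta$ may assign up to $O(\ell)$ \emph{different labels} to $A^c$, so the available alphabets on $A$ under the two measures are different sets; there is therefore no natural pointwise ``agree/disagree'' state on the sites of $A$, and ``a disagreement seeded at the left endpoint propagates rightward one site at a time'' does not describe anything well-defined. You correctly flag the $\ell\loc$ rigidity issue and correctly move to block scale $\Theta(\ell)$, and you even point at the right ingredient (Lemma~\ref{l:stoch-dom-geo} and its geometric domination), but ``show a per-block contraction'' is exactly the step that needs a concrete event, and that event is precisely the common-disconnecting-point event. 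Without it the coupling has no mechanism to make the two conditional laws on $A_r$ literally coincide, and the $\exp(-r/C'\ell)$ bound cannot be closed. The other pieces of your sketch (the reduction to one boundary endpoint at a time, and the remark that $\mu(\ell\loc\mid\eta)$ is bounded below) are also not how the paper proceeds — the paper reveals from both sides simultaneously and never needs a lower bound on $\mu(\ell\loc\mid\eta)$ — but those are stylistic, not fatal; the missing disconnecting-point idea is the genuine gap.
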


This property is the analogue of the well-studied notion of spatial mixing from analysis of spin system dynamics~\cite{Martinelli}, where usually the spatial mixing holds for \emph{all} configurations on $A^c$. In our setting, however, that stronger statement can be seen to obviously fail: take $A = \{1,\ldots,n/2\}$ and suppose that $\eta$ assigns particles $\{1,\ldots,n/2\}$ to $A^c$ while $\bar\eta$ assigns particles $\{n/2+1,\ldots,n\}$ to $A^c$; then the available alphabets on $A$ will be completely disjoint, and there is no opportunity for $\sigma$ and $\bar\si$ to agree anywhere in $A$. 

The proof of \Cref{lem:spatial-mixing-simpler} goes by directly constructing a coupling $\sigma,\bar\si$ to agree on $A_r$ with high probability. We will prove a generalization of \Cref{lem:spatial-mixing-simpler} stated in \Cref{lem:spatial-mixing} in \Cref{sub:spatial-mixing-localized}. The coupling relies on finding what we call disconnecting positions in $A\setminus A_r$. 

\begin{definition}\label{def:disconnecting}
We say a position $k$ is \emph{disconnecting} if $\si([k])=[k]$ as \emph{unordered} sets. In words, the first $k$ particles are contained (in some order) in the first $k$ positions of $\si$. 
\end{definition}

We prove that a disconnecting point is likely for $n$ sufficiently large (as a function of $\eps$).

\begin{lemma}\label{lem:prob-of-good-point-simpler}
For every $\eps$, there exists $c(\eps)>0$ and $C(\eps)>0$ such that the following holds for all $n\ge C(\eps)$, all $\eta \in \ell\loc$, and all $k\in A_\ell = [i+1+\ell, n-j-\ell]$: 
\begin{equation}
    \mu(k\text{ disconnecting for }\si \mid \si(A^c)= \eta, \si \in \ell\loc) \ge c(\eps)\,.
\end{equation}
\end{lemma}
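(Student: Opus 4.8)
The plan is a Peierls-type weight-comparison argument: for each $t\ge1$ I would bound the conditional probability that the cut between positions $k$ and $k+1$ carries exactly $t$ misplaced particles by a multiple of the probability that it carries none, and then sum over $t$. Write $g(\sigma,k)=|\{p\le k:\sigma(p)>k\}|$, so that $k$ is disconnecting for $\sigma$ iff $g(\sigma,k)=0$, and let $\mathcal C=\{\sigma(A^c)=\eta,\ \sigma\in\ell\loc\}$ be the event we condition on (nonempty, since it contains $\eta$ itself). The structural fact that makes this work, immediate from $\ell$-localization, is that the defect lives in a band around the cut: the set $X$ of particles $\le k$ sitting at positions $>k$ is contained in $[k-\ell+1,k]$ and occupies positions in $[k+1,k+\ell]$, symmetrically the set $Y$ of particles $>k$ at positions $\le k$ is contained in $[k+1,k+\ell]$ and occupies positions in $[k-\ell+1,k]$, and $|X|=|Y|=g(\sigma,k)$; moreover positions $[1,k-\ell]$ already hold only particles $\le k$ and positions $[k+\ell+1,n]$ only particles $>k$. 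Since $k\in A_\ell$, the band $[k-\ell+1,k+\ell]$ lies inside $A$, so any surgery supported there automatically respects the boundary condition on $A^c$.

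\textbf{The surgery.} Given $\sigma$ with $g(\sigma,k)=t\ge1$, let $u=u(\sigma)$ be the leftmost position $\le k$ holding a $Y$-particle and $v=v(\sigma)$ the rightmost position $>k$ holding an $X$-particle, and set $W=[u,v]\subseteq[k-\ell+1,k+\ell]$, so $u\le k<k+1\le v$. Let $\psi(\sigma)$ be obtained from $\sigma$ by a \emph{stable partition of $W$ at level $k$}: reorder the particles currently in $W$ so that those $\le k$ precede those $>k$, each of the two groups keeping its internal left-to-right order. One then verifies: (i) $\psi(\sigma)$ is disconnecting at $k$, because $W$ contains exactly $k-u+1$ particles $\le k$, so after the reordering these fill positions $[u,k]$, and together with positions $[1,u-1]$ (which hold only particles $\le k$) this forces $[1,k]$ to hold exactly $[k]$; (ii) $\psi(\sigma)\in\ell\loc$, which follows from a self-contained lemma that stably partitioning (indeed, fully sorting) any subinterval of an $\ell$-localized permutation again yields an $\ell$-localized permutation, proved by a short rank estimate showing each moved particle lands within $\ell$ of its label; (iii) the only pairwise orderings $\psi$ changes are pairs $(a,b)$ with $a\le k<b$ that were previously inverted, and each such is turned into its correct order, so $\mu(\psi(\sigma))/\mu(\sigma)\ge(1+\eps)^{N(\sigma)}$ where $N(\sigma)$ counts these corrected inversions; since all $|X|\cdot|Y|=t^2$ pairs in $X\times Y$ are corrected and, separately, the two endpoint particles of $W$ alone force at least $v-u$ corrected inversions, $N(\sigma)\ge\max(t^2,\,v-u)$.

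\textbf{The count and the sum.} Restricted to $\{g(\cdot,k)=t\}\cap\mathcal C$, the map $\psi$ is few-to-one: from $\tau=\psi(\sigma)$ one recovers $\sigma$ once one knows the window $W=[u,v]$ and the interleaving pattern of the $\le k$ versus $>k$ particles inside it; since $u$ is forced to be a ``$>k$'' slot and $v$ a ``$\le k$'' slot, the number of such patterns for a given window is $\binom{k-u}{t-1}\binom{v-1-k}{t-1}$. Combining with (iii) and summing over windows, parametrized by $a=k-u\ge0$ and $b=v-1-k\ge0$ (so $v-u=a+b+1$), one gets
\[
\mu\big(g(\cdot,k)=t\mid\mathcal C\big)\ \le\ \Big(\sum_{a,b\ge0}\binom{a}{t-1}\binom{b}{t-1}(1+\eps)^{-\max(t^2,\,a+b+1)}\Big)\,\mu\big(g(\cdot,k)=0\mid\mathcal C\big).
\]
Using $\max(t^2,a+b+1)\ge\tfrac12 t^2+\tfrac12 a+\tfrac12 b$ and the identity $\sum_{a\ge0}\binom{a}{t-1}x^a=x^{t-1}/(1-x)^t$ with $x=(1+\eps)^{-1/2}$, the bracket is at most $(1+\eps)^{-t^2/2}x^{-2}K(\eps)^{2t}$ with $K(\eps)=x/(1-x)$. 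The Gaussian factor $(1+\eps)^{-t^2/2}$ swamps the exponential $K(\eps)^{2t}$, so $B(\eps):=\sum_{t\ge1}(1+\eps)^{-t^2/2}x^{-2}K(\eps)^{2t}<\infty$ — and this bound is independent of $\ell$, $n$, $i$, $j$ and $\eta$, because we freely extended the sum over windows from $[0,\ell-1]^2$ to all of $\Z_{\ge0}^2$. Hence $1=\mu(g=0\mid\mathcal C)+\sum_{t\ge1}\mu(g=t\mid\mathcal C)\le(1+B(\eps))\,\mu(g=0\mid\mathcal C)$, which gives the claim with $c(\eps)=(1+B(\eps))^{-1}$.

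\textbf{Main obstacle.} The crux is forcing the surgery's weight gain to beat the combinatorial entropy of its reconstructions \emph{uniformly in $\ell$}. Naively uncrossing the defect by a single transposition fails: moving one particle past the $\Theta(\ell)$ bystanders between the two sides flips $\Theta(\ell)$ unrelated orderings, at a cost that depends on $\ell$. Conversely, fully sorting $W$ forgets its internal order and has $|W|!$ preimages. The stable partition is the compromise that only ever corrects $(\le k,>k)$-inversions — so the weight can never decrease — while discarding only the $O\!\left(\binom{|W|}{t}\right)$ interleaving. What then has to be established quantitatively is that the $\Theta(t^2)$ gain from the $X\times Y$ inversions outpaces the $\Theta(\eps^{-2t})$ entropy of interleavings (which dominates the expression up to $t\sim\log(1/\eps)/\eps$); it is precisely the quadratic, rather than merely linear, dependence on $t$ in the weight gain that makes the series converge. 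Carefully proving (ii) and pinning down these two competing exponents is where the real work lies. (The hypothesis $n\ge C(\eps)$ is not needed for this simplified version; I expect it enters only in the more general conditional variant.)
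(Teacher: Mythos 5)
Your proof is correct, and it takes a genuinely different route from the paper's. The paper proves the (more general) \Cref{lem:prob-of-good-point} by writing the disconnecting event as a telescoping product
\(
\prod_{i=1}^{k}\mu\bigl(\si(i)\le k\mid \si(1)\le k,\ldots,\si(i-1)\le k,\ \cdot\bigr)
\),
bounding each factor by \(1-(1+\eps)^{-(k+1-i)}\) via the geometric stochastic-domination estimate \Cref{l:stoch-dom-geo} together with the relabeling machinery of \Cref{l:relabeling} (the latter being what passes the boundary condition \(\si(A^c)=\eta\) through the conditional revealing), giving \(c(\eps)=\prod_{j\ge 1}(1-(1+\eps)^{-j})=e^{-O(1/\eps)}\). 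You instead run a direct Peierls comparison against the disconnecting event: a stable-partition surgery \(\psi\) supported in a window \(W\) inside the band \([k-\ell+1,k+\ell]\subseteq A\) that corrects only \((\le k,>k)\)-inversions, a weight gain \((1+\eps)^{N(\sigma)}\) with \(N(\sigma)\ge\max(t^2,v-u)\), and a reconstruction count \(\binom{k-u}{t-1}\binom{v-1-k}{t-1}\) over interleavings. Your argument is more self-contained in that it avoids \Cref{l:relabeling} entirely, because the surgery never touches \(A^c\); your claim (ii) that \(\psi\) preserves \(\ell\loc\) is cleanest to get by decomposing the stable partition into adjacent swaps of out-of-order pairs and invoking the paper's \Cref{cl:ooo}, rather than a bespoke rank estimate. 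The trade-off is \(\eps\)-dependence: your \(B(\eps)\) is dominated by \(t\asymp\eps^{-1}\log(1/\eps)\), giving \(c(\eps)\sim e^{-\Theta(\eps^{-1}(\log(1/\eps))^2)}\), slightly weaker than the paper's \(e^{-O(1/\eps)}\); this propagates into \(C'(\eps)\) in \Cref{lem:spatial-mixing} but is immaterial for the qualitative statement. Your closing remark is also right: the hypothesis \(n\ge C(\eps)\) is not used in either proof of this lemma.
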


To prove \Cref{lem:prob-of-good-point-simpler}, we utilize the stochastic domination properties established in the proof of \Cref{lem:stationary-highprob}.  A more general version of \Cref{lem:prob-of-good-point-simpler} is stated in \Cref{lem:prob-of-good-point} and proved in \Cref{sub:spatial-mixing-localized}.

Now note that if two configurations $\sigma([k]),\bar \sigma([k])$  are such that $k$ is a disconnecting point for both (though the $[k]$ particles may be in different orders in them), then the distribution induced on $[k]^c$ by each of them is the same, and hence the identity coupling couples them to agree on $[k]^c$. 
In every segment of length $\ell$ in $A\setminus A_r$, by Lemma~\ref{lem:prob-of-good-point-simpler}, there is a new attempt at having a disconnecting point in each of $\sigma$ and $\bar \sigma$, with  constant probability of success. Therefore, one can use a revealing procedure to prove the spatial mixing property in \Cref{lem:spatial-mixing-simpler}.

\subsection{Recursive argument to prove polynomial mixing}

We utilize the above spatial mixing and burn-in results to establish polynomial mixing time.

\begin{theorem}\label{thm:poly}
    There exists $C>0$ such that for all $\ep>0$, there exists $C'(\ep)$, such that the mixing time of $\Mnn$ satisfies
    \[ \Tmix \leq C'(\ep)\cdot n^C\,.
    \]
\end{theorem}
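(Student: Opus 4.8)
\emph{Set-up: induct over a richer family.} The plan is to prove, by strong induction on $n$, a bound $T^{(n)}(\delta)\le C'(\eps)\,n^{C}$ with a \emph{universal} exponent $C$ (for any fixed $\delta$), not only for $\Mnn$ but for the whole family of chains obtained from $\Mnn$ on a window $W$ by (i) conditioning on an $\ell$-localization event and (ii) fixing an arbitrary $\ell$-localized configuration $\eta$ on $[n]\setminus W$; here $T^{(n)}(\delta)$ denotes the worst mixing time to $\delta$ over all such chains with $|W|=n$, and we abbreviate $T^{(n)}=T^{(n)}(1/4)$. The stationary law of each such chain is a conditioned version of \eqref{eq:stationary-distribution}, still a product over ordered pairs inside $W$ times an ``external field'' coming from $\eta$, and it is for exactly this family that the excerpt records its lemmas in their more general, conditional forms (\Cref{lem:spatial-mixing}, \Cref{lem:prob-of-good-point}, \Cref{lem:single-site-burn-in}, \Cref{lem:restricted-block-dynamics-burn-in}); the induction step reduces the scale $n$ to a scale $\le\alpha n$ for a fixed $\alpha<1$.

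\emph{Passing to the censored chain.} For a single chain of the family, \Cref{lem:single-site-burn-in-simpler} gives that after $C_0(\eps)n^2$ steps it lies in $\ell_0\loc$, $\ell_0=C_0\log n$, with probability $\ge 1-n^{-10}$ regardless of the start, and (conditional burn-in) stays there for any $\poly(n)$ horizon; since $\|\mu(\cdot\mid\ell_0\loc)-\mu\|_\tv\le n^{-10}$ by \Cref{lem:stationary-highprob}, it suffices to bound the mixing time of the \emph{censored} chain $\widehat\M$ --- $\Mnn$ with the rare moves out of $\ell_0\loc$ rejected --- started from a worst-case $\ell_0$-localized state. All remaining arguments stay inside the localized set, where the conditional spatial-mixing bound of \Cref{lem:spatial-mixing-simpler} is available (and, as the excerpt notes, its unconditional analogue is false).

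\emph{The inductive step: one block-dynamics reduction.} Cover $W$ by a bounded number of windows $B_1,\dots,B_m$, each of length $\le\alpha|W|$, with consecutive windows overlapping in an interval of length $\Theta(|W|)$. Consider the block dynamics that picks a uniform $B_s$ and refreshes $\sigma(B_s)$ from the stationary conditional law given the rest; because $\sigma(B_s^c)$ is $\ell_0$-localized, this conditional law belongs to the same family on $\le\alpha|W|$ particles, so the refresh is simulated, to within a prescribed accuracy, by running that smaller chain for the number of steps its inductive bound permits. To bound the block dynamics, couple two copies $\sigma,\tau$ refreshing the same $B_s$ at each step: by \Cref{lem:spatial-mixing-simpler} a refresh of $B_s$ makes $\sigma,\tau$ agree on the $r$-interior of $B_s$ with probability $1-\exp(-r/C'\ell_0)$, so taking $r=\Theta(\ell_0\log n)=\Theta(\log^2 n)$ the two copies, started in $\ell_0\loc$ (using the block-dynamics burn-in of \Cref{lem:restricted-block-dynamics-burn-in}), agree off the two length-$r$ ``corners'' of $W$ after a bounded number $K=K(\alpha)$ of refreshes, with failure probability $n^{-\omega(1)}$ --- a purely geometric fact about how the $r$-interiors of the $B_s$ cover $W$ away from its endpoints. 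The corners are disposed of using the disconnecting-point estimate \Cref{lem:prob-of-good-point-simpler}: with constant probability a disconnecting point occurs within distance $r$ of an endpoint, after which the corner is an independent copy of the family on $O(\log^2 n)$ particles, handled by the induction hypothesis itself. This yields a recursion of the form
\[
T^{(n)}(\delta)\ \le\ K\, T^{(\alpha n)}(\delta')\ +\ \widetilde O(\polylog n)\ +\ C_0(\eps)\,n^2,
\]
where $\delta'$ is chosen so that the $K$ imperfect refreshes plus the $n^{-\omega(1)}$ block-dynamics error total $\le\delta$. Running a geometric error budget $\delta_j\asymp\delta_{j+1}$ over the $O(\log n)$ scales --- so that no level contributes more than a constant multiplicative factor --- and using the trivial base case $n=O(1)$ (a fixed finite chain with positive transition probabilities, mixed to $1/\poly(n)$ at cost $O_\eps(\log n)$), the recursion unrolls to $\Tmix=T^{(n)}(1/4)\le C'(\eps)\,n^{C}$, with $C\asymp 2+O(\log K)$ universal and the $\eps$-dependence confined to the prefactor inherited from the $C_0(\eps)n^2$ burn-in.

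\emph{Where the difficulty lies.} The real obstacle is that \Cref{lem:spatial-mixing-simpler} holds only inside the localized set, so the entire block-dynamics recursion must be run for censored chains: one needs the conditional/boundary-conditioned generalizations of every ingredient, a burn-in estimate at each scale for \emph{both} the single-site and the block dynamics (so that each sub-chain provably enters and stays in its localized set before spatial mixing is invoked), and couplings linking the censored and uncensored chains and linking successive scales while keeping the error budget under control. A secondary but essential point is the treatment of the boundary corners of each window, where interior coupling provably cannot synchronize the two copies; this is precisely what forces the use of disconnecting points and a self-referential call of the recursion on $\polylog(n)$-sized windows.
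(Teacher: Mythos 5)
Your proposal shares the paper's two core ideas---burn in to $\ell_0\loc$ and then do a multi-scale block-dynamics reduction using spatial mixing and relabeling---but executes the recursion in a genuinely different way: you recurse on \emph{mixing times} and implement each block refresh approximately by running the smaller chain, whereas the paper recurses on the \emph{spectral gap} via the clean multiplicative decomposition of \Cref{prop:block-dynamics},
\[
\gamma(\Mnn)\ \ge\ \tfrac12\,\gamma(\Mwe)\,\min_{B,\eta}\gamma(\Mnn(B^\eta)),
\]
with just two blocks $B_W=[1,\lceil 2N/3\rceil]$, $B_E=[\lfloor N/3\rfloor,N]$, and a one-line inductive inequality $G_N\le 2C_{\mathsf{block}}G_{\lceil 2N/3\rceil}$. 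The paper's spectral-gap route has no error budgets, no $\delta'$-accuracy bookkeeping, and no quantified number of ``refreshes.'' Your route can in principle work, but it buys nothing and costs some real care (the implemented chain with imperfect refreshes no longer has $\mu$ as its stationary law, so you have to couple it to the ideal chain at every scale; and the multiplicative factor $K$ and the shrinking $\delta'\approx\delta/K$ per level both have to be tracked explicitly).

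The one concrete issue I would flag is the ``corner'' problem you invent and then repair. In the paper's setup there are no corners. \Cref{l:relabeling} shows that a window $W$ with a fixed $\vell$-localized boundary $\eta$ on $W^c$ is, after relabeling, just another $\eps$-positively biased chain on $[|W|]$ with \emph{free} ends, restricted to some admissible $\vell'$. And \Cref{lem:spatial-mixing} is stated precisely so that when $i=0$ (or $j=0$) the set $A_r$ extends all the way to the endpoint $1$ (or $n$), with no $r$-buffer required on the free side. So with $B_W,B_E$ as above, after an East update one already couples on $[\lfloor N/3\rfloor+r,\,N]$ with no corner near $N$, and after the subsequent West update the identity coupling finishes: there is nothing left over of size $\Theta(\log^2 n)$. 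Your disconnecting-point repair, and the secondary recursive call on $\mathrm{polylog}(n)$-sized windows, is thus machinery aimed at a non-issue; besides being unnecessary, as written it is fragile (you set $r=\Theta(\ell_0\log n)$ using the top-scale $\ell_0$ at every level, which means the geometry can only be maintained down to window sizes $\gg(\log n)^2$, whereas the paper's recursion re-burns-in to the \emph{scale-local} $\ell_0(N)=C_0\log N$ and takes $r=\Theta(N)$, cleanly reaching a genuine $O_\eps(1)$ base case). If you drop the corners, fix $r$ to scale with the window as the paper does, and recurse on the spectral gap rather than the mixing time, your argument collapses to the paper's.
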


A more refined version of \Cref{thm:poly} is stated in \Cref{prop:polynomial-mixing-time}, which includes the quantitative dependence on $\eps$ in the mixing time.

To obtain Theorem~\ref{thm:poly}, we utilize a recursive argument.  As is common in the analysis of Glauber dynamics for spin systems like the Ising model on lattice graphs, the recursive argument uses comparison with a suitable block dynamics.

We consider the following block dynamics which we denote as $\Mwe$. Define the blocks
\begin{align*}
    B_W = \{1,...,\lceil 2n/3\rceil\} \qquad B_E= \{\lfloor n/3\rfloor,...,n\}\,. 
\end{align*}
The transitions $\blocksigma_t\rightarrow\blocksigma_{t+1}$  operate as follows.  From $\blocksigma_t$, randomly select $B\in\{B_W,B_E\}$, set $\blocksigma_{t+1}(B^c)=\blocksigma_{t}(B^c)$ and sample $\blocksigma_{t+1}(B)$ from $\mu$ conditional on $\blocksigma_{t+1}(B^c) = \blocksigma_t(B^c)$.  

We utilize the following decomposition result, which is standard in the setting of spin systems.  
For a reversible, ergodic Markov chain $\M$ on state space $\Omega$ with transition matrix~$P$, 
 let  $\lambda_1=1> \lambda_2\geq\lambda_3\geq \cdots \geq \lambda_{|\Omega|}>-1$ denote the eigenvalues of $P$.  
 Let $\gamma(\M)=1-\lambda_2$ denote the {\em spectral gap} of $\M$. We state here the following simple form of the decomposition result; see \Cref{prop:block-dynamics} for a more general statement.

\begin{equation}
    \label{eqn:simple-block-dynamics}
        \gamma(\Mnn)\ge 
        \frac{1}{2} \cdot \gamma(\Mwe) \cdot \min_{B\in\{B_W,B_E\}}\min_{\eta} \gamma(\Mnn(B^\eta))\,,
    \end{equation}
    where $\eta$ is a fixed configuration on $B^c$ and $\Mnn(B^\eta)$ is the chain $\Mnn$ operating on $B$ (so $n$ is rescaled to $\lceil 2n/3\rceil$) conditional on the fixed configuration $\eta$ on $B^c$.

The high-level idea is to lower bound $\gamma(\Mwe)$, and then treat $\gamma(\Mnn(B^\eta))$ as (up to relabeling of the particles) an adjacent transposition shuffle on a smaller scale $[\ce{2n/3}]$ instead of $[n]$, with its own $\eps$-positively biased vector $\vec{p}$.

\begin{lemma}\label{lem:block-mixing-time-simpler}
    There exists a constant $C_{\mathsf{block}}>0$ (independent of $\eps$) 
        such that
    \[ \gamma(\Mwe)^{-1} \leq C_{\mathsf{block}}\,.
    \]
\end{lemma}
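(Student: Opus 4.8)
The plan is to show that the two-block dynamics $\Mwe$ on blocks $B_W, B_E$ (which overlap in the middle third $[\lfloor n/3\rfloor, \lceil 2n/3\rceil]$) has $O(1)$ relaxation time by exhibiting a coupling of two copies $\widehat\sigma_t, \widehat\tau_t$ that coalesce in a constant number of steps with constant probability; by standard path-coupling / coupling-to-spectral-gap arguments (or directly, a contraction-in-expectation estimate on a suitable metric), this yields $\gamma(\Mwe)^{-1} = O(1)$. The key structural fact is that a single block update, say of $B_W$, resamples $\widehat\sigma(B_W)$ from $\mu(\,\cdot \mid \widehat\sigma(B_W^c))$; since $B_W^c = [\lceil 2n/3\rceil+1, n]$ consists of positions entirely to the right of center, and — crucially — by the burn-in for the block dynamics (Lemma~\ref{lem:restricted-block-dynamics-burn-in}, the block-dynamics analogue of Lemma~\ref{lem:single-site-burn-in-simpler}) the relevant configurations are $O(\log n)$-localized, the available alphabet on $B_W$ is essentially forced: up to $O(\log n)$ slack at the boundary position $\lceil 2n/3\rceil$, the multiset of particles in $B_W$ is determined by $\widehat\sigma(B_W^c)$. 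So after one $B_W$-update followed by one $B_E$-update, the only disagreement between the two copies is confined to the overlap region.

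More precisely, I would run the coupling as follows. First couple the two $B_W$-updates: since both copies are $\ell_0$-localized with $\ell_0 = O(\log n)$, and by Lemma~\ref{lem:prob-of-good-point-simpler} there is constant probability of a disconnecting position in $B_W \cap B_E \cap A_{\ell_0}$ for each copy; on the event that a common disconnecting position $k$ exists for the resampled $\widehat\sigma(B_W)$ (which we can arrange with constant probability by a maximal coupling, because the marginal law of $\widehat\sigma([k])$ depends on $\widehat\sigma(B_W^c)$ only through whether $k$ is disconnecting — cf.\ the identity-coupling remark after Lemma~\ref{lem:prob-of-good-point-simpler}), the two copies now agree on all of $[n]\setminus B_W \cup [k+1,\lceil 2n/3\rceil] = [k+1,n]$, wait, let me restate: after this step the two copies agree on positions $[1,k] $ as unordered sets and agree on $[k+1,n]$ exactly, where $k$ lies in the middle third. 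Then a single $B_E$-update resamples positions $\supseteq [k+1, \lceil 2n/3\rceil]$; since the two copies already agree on $B_E^c = [1,\lfloor n/3\rfloor - 1]$ and $k$ was disconnecting so the available alphabet on $B_E$ is identical in both copies, couple the two $B_E$-updates identically (or maximally) to make the copies agree everywhere with constant probability. Hence in two steps the copies coalesce with probability $\ge c(\eps') > 0$ — and here one must track that the rescaled chain on each block is $\eps$-positively biased with the \emph{same} $\eps$, so all constants are uniform in $n$ — giving $\Tmix(\Mwe) = O(1)$ and thus $\gamma(\Mwe)^{-1} = O(1)$ by \cite[Theorem 12.5]{LP}.

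The main obstacle, I expect, is the conditioning subtlety: the statement of Lemma~\ref{lem:block-mixing-time-simpler} as written is for $\Mwe$ with no localization restriction, yet spatial mixing and the disconnecting-point estimates are only available within $\ell\loc$. The clean way around this is to prove the bound first for the version of $\Mwe$ restricted to $O(\log n)$-localized configurations (where the above coupling runs), invoke the block-dynamics burn-in Lemma~\ref{lem:restricted-block-dynamics-burn-in} to control the time spent outside that set, and then transfer the relaxation-time bound to the unrestricted chain via a standard restricted-chain comparison (the same device used elsewhere in the paper for the single-site chain). A secondary technical point is verifying that the resampling distribution $\mu(\,\cdot \mid \widehat\sigma(B^c))$, for $B^c$ consisting of a contiguous extreme block of positions with an $\ell_0$-localized boundary, still admits the disconnecting-point lower bound — but this is exactly the content of (the general version of) Lemma~\ref{lem:prob-of-good-point-simpler} applied with the appropriate $i$ or $j$, so it should go through directly. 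Everything else — converting a constant coalescence probability into an $O(1)$ relaxation time, and the uniformity of constants across scales — is routine.
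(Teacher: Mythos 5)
Your high-level strategy---burn the block dynamics into an $O(\log n)$-localized set via Lemma~\ref{lem:restricted-block-dynamics-burn-in}, couple two copies in $O(1)$ steps, and convert the coupling bound into a spectral gap bound---does match the paper's. However, the specific two-step coupling you describe, with $B_W$ updated first and then $B_E$, has a genuine gap, and the paper uses the opposite order precisely to avoid it.

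After the $B_W$-update with a common disconnecting position $k$, your copies agree on $[1,k]$ only as \emph{unordered} sets, and they may still disagree on $B_W^c = [\lceil 2n/3\rceil+1,n]$, which the $B_W$-update never touched. The claim that ``the two copies now agree on $[k+1,n]$'' is therefore false, and the subsequent claim that they ``already agree on $B_E^c = [1,\lfloor n/3\rfloor-1]$'' is never established: $B_E^c$ is a subset of $[1,k]$, whose ordered content you concede may differ between the two copies. The identity coupling at the $B_E$-update hence does not go through as written. (It is true, incidentally, that since $B_E^c$ lies entirely to the left of $B_E$, the conditional law on $B_E$ depends on $\widehat\sigma(B_E^c)$ only through the \emph{set} of particles occupying $B_E^c$; but you have not shown these sets coincide across the two copies, and in general they need not.) The argument can in principle be repaired---given a common disconnecting position $k$, the conditional law of $\widehat\sigma([k])$ is independent of $\widehat\sigma(B_W^c)$, so the first step could be strengthened to force \emph{ordered} agreement on $[k]$, after which a third step (another $B_W$-update) would still be needed to clear the remaining disagreement---but this is more delicate and uses more steps than necessary.

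The paper instead updates $B_E$ first, applying the spatial-mixing estimate (Lemma~\ref{lem:spatial-mixing}, the general form of Lemma~\ref{lem:spatial-mixing-simpler}) with $A=B_E$ and $A_r = B_E\setminus B_W = B_W^c$, so that the two resampled configurations agree on $B_W^c$ with probability exponentially close to $1$ in $n/\log n$. The subsequent $B_W$-update is then conditioned on a region on which the two copies agree as ordered maps, and the identity coupling finishes in one more step. Two smaller points of divergence: the paper invokes Lemma~\ref{lem:spatial-mixing}, which already packages the needed coupling, rather than the disconnecting-point estimate Lemma~\ref{lem:prob-of-good-point-simpler} directly; and the unrestricted statement of Lemma~\ref{lem:block-mixing-time-simpler} is obtained as the special case $\vec{\ell}=(\infty,\infty)_i$ of the general restricted Lemma~\ref{lem:block-mixing-time}, so no separate restricted-to-unrestricted transfer is needed, contrary to what you suggest.
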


A more general version of \Cref{lem:block-mixing-time-simpler} is stated in \Cref{lem:block-mixing-time}, which is proved in \Cref{sec:poly-mixing}.
The proof goes by showing that block dynamics first burns in after $O(1)$ steps per (a variant of) Lemma~\ref{lem:single-site-burn-in-simpler}, then once it is burnt-in, the spatial mixing estimates of Lemma~\ref{lem:spatial-mixing-simpler} are applicable. Namely, if a block update occurs at time $t$ on $B_E$ and two block dynamics chains we are trying to couple, $\sigma_t, \bar \sigma_t$, are both burnt-in, then conditional on their values on $B_W\setminus B_E$, 
their marginals on $B_E\setminus B_W$ are coupled to agree with high probability. If the next update occurs on $B_W$ then the identity coupling will get them to agree everywhere. 

With the main ingredients in place, we provide the proof of Theorem~\ref{thm:poly}.

\begin{proof}[\textbf{\emph{Proof of \Cref{thm:poly}}}]
Let $C'=C'(\eps)$ and $C$ be sufficiently large constants, where $C'$ depends on~$\eps$ and $C$ is independent of $\eps$. We will prove inductively that for all $n$,  the maximum (over all $\eps$-positively biased $\vec{p}$) inverse spectral gap of $\Mnn$ on $[n]$ is bounded by $C' n^{C}$. 

Let $n_0$ be a sufficiently large constant as a function of $\eps$. Then the inverse gap on a segment of length less than or equal to $n_0$ is at most some constant $C'(\eps)$. (This can be seen to be bounded by a constant that is uniform over $\eps$-positively biased $\vec{p}$ by using the bound on the coupling time that comes from, in every step, making the choice that puts the adjacent particles in their preferred order.) This provides the base case.

Now assume the inductive statement holds for all $N <n$, and we will prove it holds for $n$. By applying~\eqref{eqn:simple-block-dynamics}, one obtains the following upper bound on the inverse spectral gap:
\begin{align*}
    2 C_{\mathsf{block}} \max_{B\in B_W,B_E}\max_\eta \gamma(\Mnn(B^\eta))^{-1}\,.
\end{align*}
For any fixed $B$ and configuration $\eta$ on $B^c$, by relabeling the remaining particles (see Lemma~\ref{l:relabeling} for details on this relabeling), $\Mnn(B^\eta)$ is equivalent to another adjacent transposition chain with $\eps$-positively biased probabilities, on a segment now of length $2n/3$. Thus by the inductive assumption, we get the following upper bound on the inverse spectral gap:
\begin{align*}
    2C_{\mathsf{block}} \cdot C'(\eps) \cdot (2/3)^{C} n^{C}\,.
\end{align*}
As long as $C$ is large enough that $2C_{\mathsf{block}}(2/3)^C <1$, this evidently satisfies the desired bound for $n$, concluding the proof.  
\end{proof}

\subsection{Boosting to quadratic mixing time}
We conclude the proof sketch section by describing how to boost a polynomial mixing time bound for $\Mnn$ to a quadratic one. 

By Lemma~\ref{lem:single-site-burn-in-simpler}, we know that for a polynomial stretch of times, $t\in [n^2, n^4]$, say, the chain $\Mnn$ is in $\ell_0\loc$ for $\ell_0 = C_0(\eps) \log n$. 
We will show that restricted to the burn-in set (i.e., rejecting any $\Mnn$ updates that take the dynamics outside $\ell_0\loc$), the mixing time is near-linear $\widetilde O(n)$, from which the upper bound of Theorem~\ref{thm:main} would follow. 

For showing that the restricted adjacent transposition chain mixes quickly, we utilize another block dynamics reduction. Namely, we construct the following blocks 
\begin{align*}
    B_0 = \bigcup_{i\ge 0} (6 i (\log n)^3, (6i+4)(\log n)^3] \qquad B_1 = \bigcup_{i\ge0} ((6i+3) (\log n)^3, (6i+7)(\log n)^3]
\end{align*}
where each block itself consists of $O(n/(\log n)^3)$ many disjoint segments of length $4(\log n)^3$. Furthermore, note that the distance between $B_0 \setminus B_1$ and $B_1\setminus B_0$ is $(\log n)^3$ as $B_0$ and $B_1$ intersect on stretches of length $(\log n)^3$ at the ends of each constituent segment. 

We apply the analogue of the bound of~\eqref{eqn:simple-block-dynamics} to this block dynamics. The idea of the boosting is that restricted to $\ell_0\loc$, the inverse spectral gap of the block dynamics is $O(1)$ because the configuration on $B_0 \setminus B_1$ can be decoupled from the marginal on $B_1 \setminus B_0$ using the spatial mixing estimate Lemma~\ref{lem:spatial-mixing-simpler} on each constituent $(\log n)^3$-length segment of $B_1$, and union bounding. 

At the same time, we wish to bound the inverse spectral gap of the adjacent transposition on $B\in \{B_0,B_1\}$ by reasoning that 
\begin{itemize}
    \item restricted to $\ell_0\loc$, the individual constituent segments evolve independently, and 
    \item the mixing time on each $(\log n)^3$-length segment is bounded by $(\log n)^{O(1)}$ by application of Theorem~\ref{thm:poly} to that smaller scale. 
\end{itemize}
With these two ingredients, due to tensorization of the spectral gap for product chains, we would get a bound of $O(n (\log n)^{O(1)})$ for the inverse gap of $\Mnn(B^\eta)$ for $B\in \{B_0,B_1\}$. 

There is a subtlety here, however, that the restriction to $\ell_0\loc$ (needed for the configurations on the constituent segments of $B_i$ to be independent) forces the block dynamics to be performed conditional on the configuration after each resampling to remain in $\ell_0\loc$. In turn, the inverse gap bound on $B\in \{B_0,B_1\}$ needs to be for the restricted adjacent transposition shuffle that rejects any update that takes the full configuration out of $\ell_0\loc$. 
This is ultimately what forces us to actually prove all the preceding lemmas and Theorem~\ref{thm:poly} in more general forms, conditioned on certain generalizations of the $\ell\loc$ events, as alluded to at the beginning of the proof sketch.

\section{Spatial mixing for the stationary distribution}\label{sec:spatial-mixing}

In this section, we study the stationary distribution $\mu$ associated to the $\eps$-positively biased adjacent transposition shuffle. Our main aim in the section is to show that for $O(\log n)$-localized configurations, there is (near) exponential decay of correlations in the sense of Lemma~\ref{lem:spatial-mixing-simpler}. 
As hinted at in the end of the proof sketch, we actually need these stationary estimates not only under $\mu$ but also under $\mu(\cdot \mid \vec{\ell}\loc)$ where the event $\vec{\ell}\loc$ constrains each particle $i$ to be a distance $\ell_i\ge 0$ away from its ground state location $i$. Namely, the following generalizes \Cref{defn:localized}.

\begin{definition}\label{defn:generalized-localized}
        For a vector $\vec{\ell} = (\ell_i^-,\ell_i^+)_{i\in [n]}\in \R^{2n}$ with $\ell_i^\pm \ge 0$, we say  $\sigma$ is \emph{$\vec{\ell}$-localized} if 
    \begin{align*}
    \si \in \vloc :=
        \bigcap_{1\le k \le n} \{ \sigma^{-1}(k) -k \in [-\ell_k^-, \ell_k^+ ]\}.
    \end{align*}
    We let $\ell_{\max}^-=\max_k \ell_k^-$, $\ell_{\max}^+=\max_k \ell_k^+$, and $\ell_{\max} = \max\{\ell_{\max}^-, \ell_{\max}^+\}$.  
\end{definition}

    Note the earlier definition of $\ell\loc$ in \Cref{defn:localized} is a special case of $\vec{\ell}\loc$ with the constant vector $\vec{\ell} =(\ell,\ldots, \ell)$.
    We use the same terminology for a partial (injective) assignment $\si:A\to [n]$ where $A\subeq [n]$, where the intersection is now only over $k\in\si(A)$.

In order to ensure connectivity (under moves of $\Mnn$) of the set of $\vec{\ell}$-localized permutations, we restrict attention to admissible localizations, defined below.

\begin{definition}    A vector $\vec{\ell}\in \R^{2n}$ is \vocab{$n$-admissible} if 
    \begin{align*}
         j-\ell_j^-  \le k-\ell_k^- \qquad \text{ and } \qquad j + \ell_j^+  \le k + \ell_{k}^+ \qquad \forall j<k\,.
    \end{align*}
    We let $\mathcal L_n$ denote the set of $n$-admissible vectors. 
\end{definition}

The following is the general form of our spatial mixing lemma, where the spatial mixing given $(C_0 \log n)\loc$ holds even if you condition further on an admissible $\vec{\ell}\loc$. Establishing this estimate will be the main purpose of this section.  The following result is a generalization of \Cref{lem:spatial-mixing-simpler}.

\begin{lemma}
\label{lem:spatial-mixing}
There exists a constant $C'(\eps) = e^{O(1/\ep)}$ such that the following holds. 
Consider two permutations $\eta,\bar\eta\in \vloc$ and let $A=[i+1,n-j]$, $A^c=[n]\bs A$, and 
$A_r = [i+1+r, n-j-r]$. Then for $r\ge \ell_{\max}$, $r\le \fc{n-j-i-\ell_{\max}-1}2$, 
\begin{multline*}
\big\|
\mu\pa{\si({A_r})\in \cdot \mid \si(A^c) = \eta, \si\in \vloc} - 
\mu\pa{\bar\si({A_r})\in \cdot \mid \bar\si(A^c)  = \bar\eta,   \bar\si\in \vloc}
 \big\|_{\tv} \\
\le 
\exp(-r/C'(\eps)\ell_{\max}).
\end{multline*}
If $i=0$, the above holds with $A_r=[1,n-j-r]$, and if $j=0$, the above holds with $A_r=[i+1,n]$.
\end{lemma}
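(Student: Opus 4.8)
The plan is to construct an explicit coupling of $\si$ (distributed as $\mu(\cdot\mid \si(A^c)=\eta,\ \si\in\vloc)$) and $\bar\si$ (distributed as $\mu(\cdot\mid \bar\si(A^c)=\bar\eta,\ \bar\si\in\vloc)$) so that, with probability at least $1-\exp(-r/C'(\eps)\ell_{\max})$, the two permutations agree as ordered sets on $A_r=[i+1+r,n-j-r]$. The coupling proceeds by sequentially revealing the configurations from the left boundary $i+1$ of $A$ inward, scanning for a \emph{disconnecting position} in the sense of Definition~3.6. The key structural observation (already noted in the text) is that if $k$ is disconnecting for both $\si$ and $\bar\si$ — i.e.\ $\si([k])=[k]=\bar\si([k])$ as unordered sets — then the conditional laws of $\si$ and $\bar\si$ restricted to $[k+1,n]$ coincide (both are $\mu$ on the alphabet $[k+1,n]$ with the appropriate boundary conditions from $A^c$, localized), so the identity coupling makes them agree on all of $[k+1,n]\supseteq A_r$ once $k\le i+r$. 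Likewise, a disconnecting position $k'$ found from the \emph{right} boundary $n-j$ inward, with $k'\ge n-j-r$, would give agreement on $[i+1,k'-1]\supseteq A_r$; running both sweeps and taking whichever succeeds first only helps, so for concreteness I would do the left sweep and note the right sweep is symmetric.

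The main work is to show a disconnecting position appears quickly as we move in from the boundary. Here I would invoke the generalized version of Lemma~2.x, namely Lemma~3.7 (\Cref{lem:prob-of-good-point}), which guarantees: for every $k\in A_{\ell_{\max}}$, conditioned on any admissible localization and any boundary data on $A^c$ (and on all previously revealed data to the left of $k$, which can be folded into an enlarged ``$A^c$''), the probability that $k$ is disconnecting for $\si$ is at least some $c(\eps)>0$, provided $n$ is large enough in terms of $\eps$. The revealing procedure is: partition $[i+1+\ell_{\max},\ i+r]$ into $\Theta(r/\ell_{\max})$ consecutive blocks of length $\ell_{\max}$; within each block there is a fresh attempt (with success probability $\ge c(\eps)$, conditionally on the past) at producing a disconnecting position that is simultaneously disconnecting for $\si$ and for $\bar\si$. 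Because disconnecting-ness of $k$ for $\si$ is an event measurable with respect to $\si([i+1,k])$, and $\si$ and $\bar\si$ are coupled to be revealed in parallel block by block, I would couple within each block so that the two events (disconnecting for $\si$, disconnecting for $\bar\si$) are positively correlated — e.g.\ via a monotone/maximal coupling of the revealed marginals — so that the joint success probability per block is at least some $c'(\eps)>0$; then the probability that \emph{no} block yields a common disconnecting point is at most $(1-c'(\eps))^{\Theta(r/\ell_{\max})}=\exp(-\Theta(r/\ell_{\max}))$, which gives the claimed bound with $C'(\eps)=e^{O(1/\eps)}$ (the exponential dependence on $1/\eps$ entering through $c(\eps)$ and the constant in Lemma~3.7).

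A few technical points I would need to handle carefully. First, the admissibility hypothesis $\vec\ell\in\mathcal L_n$ is what makes the set of $\vec\ell$-localized permutations connected and, more to the point, what makes "revealing from the left and conditioning" well-behaved: after revealing $\si([i+1,k])$, the residual constraint on $\si([k+1,n])$ is again an admissible localization on the smaller segment (with particles $[k+1,n]$ relabeled), so Lemma~3.7 applies at the next block. Second, I need $k\in A_{\ell_{\max}}$ for each candidate $k$, which is why the sweep runs over $[i+1+\ell_{\max},\,i+r]$ and why the hypotheses $r\ge \ell_{\max}$ and $r\le (n-j-i-\ell_{\max}-1)/2$ are exactly what is needed to guarantee $A_{\ell_{\max}}\supseteq [i+1+\ell_{\max},\,i+r]$ and that there is room on both sides. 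Third, the boundary cases $i=0$ or $j=0$: when $i=0$ there is no left boundary inside $A$, so one only sweeps from the right; I would just remark that the argument is unchanged with the roles swapped. The step I expect to be the genuine obstacle is the \emph{conditional} lower bound on a common disconnecting position — i.e.\ propagating Lemma~3.7 through the sequential revealing while keeping the coupling between $\si$ and $\bar\si$ tight enough that a single disconnecting location serves both — since disconnecting-ness depends on the full revealed prefix and one must check the conditioning at each stage still falls under the hypotheses of (the general form of) Lemma~3.7; the rest is a geometric-series union bound.
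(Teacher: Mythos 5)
Your overall strategy---sequentially revealing from the boundary inward and scanning for disconnecting positions via \Cref{lem:prob-of-good-point}---is the same as the paper's, and the bookkeeping about admissibility being preserved under revealing (through \Cref{l:relabeling}) is also what the paper does. However, there is a genuine gap in the step where you argue that a \emph{single} left-to-right sweep suffices.

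You write that if $k\le i+r$ is disconnecting for both $\si$ and $\bar\si$, then the conditional laws of $\si$ and $\bar\si$ on $[k+1,n]$ coincide and the identity coupling finishes the job, and that the right sweep is ``likewise'' a symmetric alternative with ``whichever succeeds first'' being enough. This is false whenever $j>0$. Even after $\si([k])=\bar\si([k])=[k]$ as unordered sets, the two chains are still conditioned on \emph{different} boundary data on the right segment $[n-j+1,n]$ (namely $\eta$ versus $\bar\eta$), so the conditional distributions of $\si([k+1,n-j])$ and $\bar\si([k+1,n-j])$ are not equal and the identity coupling cannot be applied on $A_r$. The paper's proof explicitly requires \emph{both} a disconnecting point near the left end of $A$ and a ``reverse-disconnecting'' point near the right end (a position $m$ with $\si([m,n])=[m,n]$ as unordered sets); only then is the middle block $B$ a segment whose alphabet and boundary conditions are identical for the two chains, so that the identity coupling works on $B\supseteq A_r$. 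The one-sided sweep is sufficient only in the stated corner cases $i=0$ or $j=0$, where one side of $A^c$ is empty---and the paper singles those cases out as the exceptions, not the rule. To repair your argument, run the left-inward sweep to find a common disconnecting point at some $i+k$, then run a symmetric right-inward sweep (still conditionally on everything already revealed) to find a common reverse-disconnecting point at some $n-j+1-m$, and only then apply the identity coupling on $[i+k+1,n-j-m]$. Each of the two sweeps independently fails with probability $\exp(-\Theta(r/\ell_{\max}))$, so the union bound still gives the claimed rate.

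A smaller remark: your instinct to build a monotone or maximal coupling so that ``disconnecting for $\si$'' and ``disconnecting for $\bar\si$'' are positively correlated is unnecessary and likely awkward to justify. The paper simply reveals $\si$ and $\bar\si$ \emph{independently}, giving success probability $\rho^2$ per attempt rather than $\rho$, which costs only a constant in $C'(\eps)$ and sidesteps the need to verify any monotonicity of the disconnecting event under the revealing filtration.
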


Note that after a burn-in to $\ell_0\loc$ for $\ell_0 = C_0 \log n$, this implies exponential decay of correlations of the configuration beyond distances $O(\log n)$.

\subsection{Self-reducibility of the set of localized configurations}

Before working towards the proof of \Cref{lem:stationary-highprob}, we prove some basic properties about the connectedness and self-reducibility of the Markov chain when restricted to $\vec{\ell}\loc$ for $\vec{\ell}$ admissible.

\begin{claim}
\label{cl:ooo}
    If $\sigma \in \vloc$, and $\sigma'$ swaps (any, not necessarily adjacent) two particles that are out of order in $\sigma$, then $\sigma' \in \vloc$. In particular, the adjacent transposition chain restricted to an $\vell\loc$ for $\vell$ admissible is still ergodic. 
\end{claim}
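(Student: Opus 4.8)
The plan is to reduce $\sigma'\in\vloc$ to checking only the two localization constraints that could be affected by the swap, and then to verify those two constraints using the out-of-order hypothesis together with $n$-admissibility. Write $a<b$ for the labels of the two out-of-order particles, so that $p:=\sigma^{-1}(a)>q:=\sigma^{-1}(b)$, and $\sigma'$ is the permutation with $(\sigma')^{-1}(a)=q$, $(\sigma')^{-1}(b)=p$, and $(\sigma')^{-1}(k)=\sigma^{-1}(k)$ for every label $k\notin\{a,b\}$. Since the positions of all particles other than $a$ and $b$ are unchanged, to conclude $\sigma'\in\vloc$ it suffices to show $q-a\in[-\ell_a^-,\ell_a^+]$ and $p-b\in[-\ell_b^-,\ell_b^+]$.

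For this I would use the four inequalities that are already available: from $\sigma\in\vloc$, $a-\ell_a^-\le p\le a+\ell_a^+$ and $b-\ell_b^-\le q\le b+\ell_b^+$; from $n$-admissibility applied to the pair $a<b$, $a-\ell_a^-\le b-\ell_b^-$ and $a+\ell_a^+\le b+\ell_b^+$; and from the out-of-order hypothesis, $q<p$. Two of the required constraints come straight from $q<p$ combined with the localization of $\sigma$: namely $q<p\le a+\ell_a^+$ and $p>q\ge b-\ell_b^-$. The other two chain admissibility onto the localization of $\sigma$: $q\ge b-\ell_b^-\ge a-\ell_a^-$ and $p\le a+\ell_a^+\le b+\ell_b^+$. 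Together these four bounds give exactly $q-a\in[-\ell_a^-,\ell_a^+]$ and $p-b\in[-\ell_b^-,\ell_b^+]$, so $\sigma'\in\vloc$.

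For the ``in particular'' statement I would argue by a bubble-sort descent on the inversion count. If $\sigma\in\vloc$ is not the identity it has an adjacent inversion, i.e.\ positions $i,i+1$ with $\sigma(i)>\sigma(i+1)$; swapping them is an adjacent transposition of an out-of-order pair, hence by the first part stays in $\vloc$, and the $\Mnn$-move that places those two adjacent particles into sorted order has probability $\ge 1/2$ under $\eps$-positive bias, so it is an allowed move. Since this strictly decreases the number of inversions, iterating reaches the identity permutation, which lies in every $\vloc$ because $0\in[-\ell_k^-,\ell_k^+]$ for all $k$. Thus the identity is reachable from every state of the restricted chain; together with aperiodicity (each state has a self-loop) and reversibility of the restricted chain with respect to $\mu(\cdot\mid\vloc)$, this yields ergodicity. (When in addition $p_{ij}<1$ for all $i<j$, the reverse moves also have positive probability and the restricted chain is irreducible on $\vloc$ in the usual sense.)

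I do not expect a genuine obstacle here: the only points requiring care are invoking $n$-admissibility in the correct direction for each of the two ``chained'' inequalities — with the out-of-order hypothesis supplying the other two — and the mild subtlety that when some $p_{ij}=1$ the reverse of a bubble-sort step has probability $0$, so one obtains convergence to the unique stationary distribution $\mu(\cdot\mid\vloc)$ rather than two-sided irreducibility, which is all that is needed downstream.
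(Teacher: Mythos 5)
Your proof is correct and follows essentially the same reasoning as the paper: both rest on the same two $n$-admissibility inequalities combined with the $\vec\ell$-locality of $\sigma$. The paper argues by contradiction and spells out only the adjacent case (particles at positions $v$ and $v+1$), whereas you give a direct argument that cleanly covers an arbitrary non-adjacent swap — which is what the claim as stated actually asserts, and which cannot be reduced to a chain of adjacent out-of-order swaps in an obvious way (intermediate swaps need not be between out-of-order pairs), so your more general phrasing is a small but genuine improvement. The bubble-sort ergodicity argument matches the paper's, and your closing remark about $p_{ij}=1$ correctly flags that one gets "reachability of the ground state plus reversibility w.r.t.\ $\mu(\cdot\mid\vloc)$" rather than two-sided irreducibility, which is the sense in which the paper uses the statement.
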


\begin{proof}
Suppose that configuration $\sigma$ has particle $i$ at location $v$ and particle $j$ at location $v+1$ for $i>j$ so that they are ``out of order" and swapping the locations of particles $i$ and $j$. Suppose by way of contradiction that after the update, $v+1 - i \notin [-\ell_i^-, \ell_i^+]$ or $v-j\notin [-\ell_j^-,\ell_j^+]$, but before the update $v-i \in [-\ell_i^-, \ell_i^+]$ and $v+1-j \in [-\ell_j^-, \ell_j^+]$.

In the first case, $v+1 -i\notin [-\ell_i^-,\ell_i^+]$ while $v-i \in [-\ell_i^-,\ell_i^+]$, then necessarily $v= i+\ell_i^+$ and $v+1 > i+\ell_i^+$. But then, since $i>j$ one also has $v+1 > j+ \ell_j^+$ by admissibility of $\vell$, which gives a contradiction. 

Symmetrically, if $v-j \notin [-\ell_j^-,\ell_j^+]$ while $v+1 -j \in [-\ell_j^-,\ell_j^+]$ then necessarily $v+1=j-\ell_j^-$ so $v< j- \ell_j^-$ but then $i>j$ and admissibility of $\vell$ implies $v<i-\ell_i^-$ which is a contradiction. 

    The ergodicity follows from the fact that from any configuration, there is a path of adjacent transpositions that are swapping particles that are out of order, that takes it to the fully ordered configuration $(1,2,...,n)$. 
\end{proof}

We will need the fact that $\mu$ conditioned on the values of $\si(1),\ldots, \si(i)$ is also a distribution with $\ep$-positive bias, after relabeling $[n]\bs \{\si(1),\ldots, \si(i)\}$. Moreover, $\vell$-locality induces some $(n-i)$-admissible $\vec{\ell}'$-locality on $[n]\setminus [i]$. This is formalized below.
\begin{lemma}
\label{l:relabeling}
    Let $A=[i+1,n-j]$ and $A^c=[n]\setminus A = [i]\cup[n-j+1,n] $. 
    Let $\vell$ be $n$-admissible.
            For $b:A^c\to [n]$ that is $\vell$-localized, define the relabeling map $r_{b}:[n-i-j]\to [n]\bs b(A^c)$ as the unique strictly increasing (one-to-one) map between those sets.
    Given $\si:A\to [n]$ (such that $\si\opl b\in S_n$\footnote{$\si\opl b$ denotes the function that takes values $\si(x)$ when $x\in A$ and $b(x)$ when $x\in A^c$.}), define the restriction $R_{b}\si\in S_{n-i-j}$ by
    \[
R_b\si(x) = r_b^{-1}(\si(i+x)) \qquad \text{for $x\in [n-i-j]$}\,.
    \]
    Then the following hold.
    \begin{enumerate}
        \item 
        (Labels preserved (up to translation) away from boundary)
        Let $\si\in S_n$ and $b=\si(A^c)$. Then 
        $r_{b}(k) = k+i$ for $k\in [1+\ell_{\max}^-, n-i-j-\ell_{\max}^+]$.
                \item 
        (Locality maintained)
        Let $\si\in S_n$. 
        $\si\in \vloc$ if and only if letting $b=\si(A^c)$, $b$ is $\vell$-localized and  
        $R_b\si\in \vell'\loc$, where $\vec{\ell}'=R_b\vell$ is defined by 
        \begin{align*}
        \ell_{k}^{\prime-} &= 
                \min\{\ell_{r_b(k)}^- +k+i - r_b(k), k-1\}
        \\ 
        \ell_{k}^{\prime+} &=
        \min\{\ell_{r_b(k)}^+ + r_b(k)-k-i, n-i-j-k\}
        .
        \end{align*}
        Moreover, 
        $\vell'$ is $n-i-j$-admissible with $\ell_{\max}^{\prime\pm}\le \ell_{\max}^{\pm}$, respectively.
        \item (Measure on restriction)
        Let $\mu$ be the probability measure on $S_n$ given by $p_{k,k'}$, and let $\mu_{|b}$ be the measure conditioned on $\set{\si\in S_n}{\si({A^c})=b}$. 
        Then $(R_b)_* \mu_{|b}$\footnote{We use the notation $f_*\mu$ to denote the pushforward of the measure $\mu$ by the function $f$.} is the probability measure on $S_{n-i-j}$ given by~\eqref{eq:stationary-distribution} with $\vec{q}$ given by $q_{k,k'} = p_{r_b(k),r_b(k')}$. If $\vec{p}$ satisfies $\ep$-positive bias, then $\vec{q}$ satisfies $\ep$-positive bias. 
            \end{enumerate}
\end{lemma}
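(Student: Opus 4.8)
The plan is to verify each of the three claims essentially by unwinding definitions, since $r_b$ is just the order-isomorphism between $[n-i-j]$ and the set of labels $[n]\setminus b(A^c)$ of particles \emph{not} pinned to the boundary region $A^c$, and $R_b\sigma$ is the permutation of those $n-i-j$ labels obtained by relabeling them $1,\dots,n-i-j$ in increasing order and then shifting positions by $i$. I would set up notation carefully once at the start: write $\Lambda = [n]\setminus b(A^c)$ for the "free" labels, so $r_b$ is strictly increasing from $[n-i-j]$ onto $\Lambda$, and note that positions in $A=[i+1,n-j]$ correspond to positions $[1,n-i-j]$ under the shift $x\mapsto i+x$.

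\emph{Part (1).} For this I would argue that the number of pinned labels less than a given value is controlled by locality. If $\ell \le \ell_{\max}^-$, then for a label $m$ we have $b^{-1}(m) \ge m - \ell_{\max}^-$ for any $\vec\ell$-localized $b$ (this uses the definition of $\vec\ell\loc$ applied to the pinned particles together with $\ell_m^-\le \ell_{\max}^-$); similarly $b^{-1}(m)\le m+\ell_{\max}^+$. I would then count: $r_b(k)=k+\#\{\text{pinned labels }\le r_b(k)\}$. For $k$ in the stated range $[1+\ell_{\max}^-, n-i-j-\ell_{\max}^+]$, I claim the pinned labels $\le r_b(k)$ are exactly the $i$ labels pinned to positions $[i]$ (none pinned to $[n-j+1,n]$ lies below $r_b(k)$, and all pinned to $[i]$ do). The first assertion holds because a label pinned to a position in $[n-j+1,n]$ is $\ge n-j+1-\ell_{\max}^- \ge$ (something larger than $r_b(k)$ after using $r_b(k)\le k+i$ and the range bound); the second because a label pinned to a position $\le i$ is $\le i+\ell_{\max}^+ \le$ (something $\le r_b(k)$, using $r_b(k)\ge k+i$ since at least $i$ pinned labels lie below it... one must be slightly careful to avoid circularity here, so I would instead directly prove $r_b(k)=k+i$ by showing the set of pinned labels in $[1,k+i]$ has size exactly $i$). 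The cleanest route: show $\{1,\dots,k+i\}$ contains all $i$ labels pinned to $[i]$ and no label pinned to $[n-j+1,n]$, for $k$ in range; then $|\Lambda\cap[1,k+i]| = k$, which forces $r_b(k)=k+i$. Both containments are one-line consequences of the inequalities $b^{-1}(m)\le m+\ell_{\max}^+$ and $b^{-1}(m)\ge m-\ell_{\max}^-$ above plus the numeric range on $k$.

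\emph{Part (2).} Here I would prove the biconditional directly. Given $\sigma\in S_n$ and $b=\sigma(A^c)$: a label $k'$ of a free particle sits at position $\sigma^{-1}(k')\in A$, and by definition $R_b\sigma$ places the relabeled particle $r_b^{-1}(k')$ at position $\sigma^{-1}(k')-i$. So the locality constraint "$\sigma^{-1}(k')-k'\in[-\ell_{k'}^-,\ell_{k'}^+]$" translates, with $k=r_b^{-1}(k')$ so $k'=r_b(k)$, into "$(R_b\sigma)^{-1}(k) - k \in [\,(k - r_b(k) + i) - \ell_{r_b(k)}^-,\ (k - r_b(k)+i)+\ell_{r_b(k)}^+\,]$"; intersecting with the a priori bound $(R_b\sigma)^{-1}(k)\in[1,n-i-j]$, i.e. $(R_b\sigma)^{-1}(k)-k\in[1-k,\,n-i-j-k]$, gives exactly the stated $\ell_k^{\prime\pm}$. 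Conversely the pinned particles being $\vec\ell$-localized plus $R_b\sigma\in\vec\ell'\loc$ recovers $\sigma\in\vec\ell\loc$ by reversing this. For admissibility of $\vec\ell'$: I would check $k-\ell_k^{\prime-}$ is nondecreasing in $k$ and $k+\ell_k^{\prime+}$ is nondecreasing in $k$. Each of $\ell_k^{\prime\pm}$ is a min of two terms; $k-\min\{a_k,b_k\}=\max\{k-a_k,k-b_k\}$, a max of two nondecreasing functions (one checks $k-(\ell_{r_b(k)}^-+k+i-r_b(k)) = r_b(k)-\ell_{r_b(k)}^- - i$ is nondecreasing in $k$ because $r_b$ is increasing and $\vec\ell$ is $n$-admissible; and $k-(k-1)=1$ is constant), hence nondecreasing; the "$+$" case is symmetric. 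The bounds $\ell_{\max}^{\prime\pm}\le\ell_{\max}^{\pm}$ follow since $\ell_k^{\prime\pm} \le \ell_{r_b(k)}^{\pm} + |r_b(k)-k-i|$ and... actually more simply, from Part (1) one does not get it for all $k$; instead I would bound $\ell_k^{\prime-}\le \ell_{r_b(k)}^- + (k+i-r_b(k))$ and observe $k+i-r_b(k) = -(\#\{\text{pinned labels}\le r_b(k)\} - i)$ which could be negative; the clean bound is rather $\ell_k^{\prime-}\le\ell_{r_b(k)}^-+k+i-r_b(k)$ but we also have $\ell_k^{\prime-}\le k-1$, and a short case analysis using monotonicity of pinned-label counts shows $\min$ of the two never exceeds $\ell_{\max}^-$. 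I would spell this out as the one genuinely fiddly computation.

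\emph{Part (3).} This is the most mechanical: I would write out $\mu_{|b}(\sigma) \propto \prod_{1\le u<v\le n} p_{(\sigma\oplus b)(u),(\sigma\oplus b)(v)}$ and split the product over pairs $(u,v)$ into three groups — both in $A^c$ (constant in $\sigma$, absorbed into normalization), both in $A$, and one in each. For the "mixed" pairs with $u\in A^c, v\in A$ (or vice versa), the factor $p_{b(u),\sigma(v)}$ depends on $\sigma(v)$ but, crucially, \emph{for every} fixed free label $\ell\in\Lambda$ the set $\{u\in A^c: u<\sigma^{-1}(\ell)\}$ together with $\{u\in A^c: u>\sigma^{-1}(\ell)\}$ is determined by $b$ alone and independent of which free label is at position $\sigma^{-1}(\ell)$ — wait, that is not quite right either since $\sigma^{-1}(\ell)$ varies; but what \emph{is} true and is what I would use: the multiset of pairs (pinned position, free position) with the pinned one first is the same as a combinatorial structure, and for a pinned position $u\in[i]$ every free position $v\in A$ has $u<v$, while for a pinned position $u\in[n-j+1,n]$ every free position $v\in A$ has $u>v$. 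Hence $\prod_{u\in[i]}\prod_{v\in A} p_{b(u),\sigma(v)} = \prod_{u\in[i]}\prod_{\ell\in\Lambda} p_{b(u),\ell}$, independent of $\sigma$; similarly for $u\in[n-j+1,n]$ with $p_{b(u),\ell}$ replaced by $p_{\ell,b(u)}$. So the mixed factors are also a constant, and $\mu_{|b}(\sigma)\propto \prod_{u<v,\ u,v\in A} p_{\sigma(u),\sigma(v)}$. Reindexing $u=i+x$, $v=i+y$ and $\sigma(i+x)=r_b(R_b\sigma(x))$ turns this into $\prod_{1\le x<y\le n-i-j} p_{r_b(R_b\sigma(x)),r_b(R_b\sigma(y))} = \prod_{x<y} q_{R_b\sigma(x),R_b\sigma(y)}$ with $q_{k,k'}=p_{r_b(k),r_b(k')}$, which is exactly~\eqref{eq:stationary-distribution} for $\vec q$. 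Finally $q_{k,k'}/q_{k',k} = p_{r_b(k),r_b(k')}/p_{r_b(k'),r_b(k)} \ge 1+\eps$ whenever $k<k'$, since $r_b$ increasing gives $r_b(k)<r_b(k')$ and $\vec p$ has $\eps$-positive bias; so $\vec q$ is $\eps$-positively biased.

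\textbf{Main obstacle.} None of the three parts is deep, but the bookkeeping in Part (2) — in particular getting the clamping by $k-1$ and $n-i-j-k$ exactly right and then verifying $n-i-j$-admissibility of the clamped $\vec\ell'$ together with $\ell_{\max}^{\prime\pm}\le\ell_{\max}^{\pm}$ — is where an error is most likely to creep in, because the two terms inside each $\min$ behave differently and one must track how $r_b(k)-k$ (equivalently, the count of pinned labels below a free label) varies with $k$. I would therefore isolate the single inequality "$\min\{\ell_{r_b(k)}^-+k+i-r_b(k),\,k-1\}\le\ell_{\max}^-$" and prove it by splitting on whether $r_b(k)\le k+i$ or not, and likewise handle the admissibility by the "max of nondecreasing functions" observation above; everything else is routine.
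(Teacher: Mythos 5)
Your proposal is correct and essentially reproduces the paper's argument part by part: Part 1 by counting free labels below $k+i$ using the localization bounds on $b$; Part 2 by translating the locality interval for label $r_b(k)$ through the position shift $x\mapsto x-i$ and the relabeling $r_b^{-1}$, then clamping to the a priori range; and Part 3 by observing that the factors with both or one endpoint in $A^c$ are determined by $b$ alone (hence absorbed into the normalization), leaving exactly the $\vec q$-weight under the reindexing $\si(i+x)=r_b(\si'(x))$. The one step you explicitly defer, $\ell_{\max}^{\prime\pm}\le\ell_{\max}^{\pm}$, is closed precisely by the case split you name: since $r_b$ is an increasing injection on consecutive integers, $r_b(k)-k$ is non-decreasing, and by Part 1 it equals $i$ on the middle range, so $k\ge \ell_{\max}^-+1$ forces $r_b(k)\ge k+i$ and hence $\ell_{r_b(k)}^- + k + i - r_b(k)\le \ell_{\max}^-$, while $k\le\ell_{\max}^-$ gives $\ell_k^{\prime-}\le k-1<\ell_{\max}^-$ (and symmetrically for the $+$ side).
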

\begin{proof}
We go through the proof item by item. Note that in the definition of $\vec{\ell}\loc$, the following is equivalent: 
\begin{align*}
    \bigcap_{1\le k \le n} \{\sigma^{-1}(k) - k \in [-\ell_k^-,\ell_k^+] \} = \bigcap_{1\le k\le n} \{\sigma(k) - k \in [-\ell^+_{\sigma(k)}, \ell^-_{\sigma(k)}]\}\,.
\end{align*}
\begin{enumerate}
    \item We only need to consider when the given interval is non-empty. If $k\in [i]$, then because $b$ is $\vell$-localized, $b(k)\le k+\ell_{b(k)}^-\le i+\ell_{\max}^-$. 
    If $k\in [n-j+1,n]$, then $b(k)\ge k-\ell_{b(k)}^+\ge  n-j+1 - \ell_{\max}^+$.
    Therefore, we can write the set of particles not assigned by $b$ as 
    \begin{multline*}
[n]\bs b(A^c) = 
([i+\ell_{\max}^-]\bs b([i]))
\cup 
[i+\ell_{\max}^-+1, n-j-\ell_{\max}^+]\\
\cup 
([n-j+1-\ell_{\max}^+, n]\bs b([n-j+1,n]))\,.
    \end{multline*}
    We must have that $r_b$ maps $[\ell_{\max}^-]$, 
    $[\ell_{\max}^-+1, n-i-j-\ell_{\max}^+]$,
    $[n-i-j-\ell_{\max}^++1, n-i-j]$ to these three sets, respectively. Considering the middle set gives the conclusion.
    \item 
    Note that $\si\in \vloc$ if and only if $b$ is $\vell$-localized and for all $k\in [n-i-j]$, 
    \begin{align}
\si^{-1}(r_b(k)) \in [r_b(k)-\ell_{r_b(k)}^-, r_b(k)+\ell_{r_b(k)}^+]\,.
\label{e:loc-int1}
    \end{align}
    The condition that $R_b\si\in\vell'\loc$ is that 
    \begin{align*}
        (R_b\si)^{-1}(k) - k \in [-\ell_k^{\prime-}, \ell_k^{\prime+}]
 \end{align*}
 or equivalently that 
 \begin{align}
        \si^{-1}(r_b(k)) &\in [i+k-\ell_k^{\prime-}, i+k+\ell_k^{\prime+}].
        \label{e:loc-int2}
        \end{align}
    Note that if not for the $\min$ operation, 
    $\vell'$ was chosen exactly so that \eqref{e:loc-int1} and \eqref{e:loc-int2} coincide. 
    For admissibility, note that these intervals are monotonically moving to the right (since $\vell$ is admissible) and $r_b$ is increasing.
    Finally, note that taking the min simply truncates the intervals to $[1,n]$, and this does not change admissibility.
    
    It remains to check that $\ell_{\max}^{'\pm}\le \ell_{\max}^{\pm}$. For that it is sufficient to show
            \begin{align*}
    \forall k&\ge \ell_{\max}^-+1\,,&
    \ell_{r_b(k)}^- + k+i-r_b(k)&\le \ell_{\max}^-\,,\\
    \forall k&\le n-i-j-\ell_{\max}^+\,,&
    \ell_{r_b(k)}^+ + r_b(k) -k-i&\le \ell_{\max}^+\,.
    \end{align*}
    In light of part 1, $r_b$ maps $[\ell_{\max}^-]$ to $[i+\ell_{\max}^-]\bs b([i])$. Because $r_b$ is defined on an interval of integers, $r_b(x)-x$ is non-decreasing. Therefore, $r_b(k)\ge k+i$ in the first case, showing the first statement. 
    Similarly, $r_b$ maps $[n-i-j-\ell_{\max}^++1,n-i-j]$ to $[n-j+1-\ell_{\max}^+, n]\bs b([n-j+1,n])$, so $r_b(k)\le k+i$ in the second case, showing the second statement.
                        \item 
    Suppose $\si'=R_b\si$. Then 
    \begin{align*}
(R_b)_* \mu_{|b}(\si')
= \mu_{|b}(\si) 
&\propto \prod_{i<x<y\le n-j} p_{\si(x),\si(y)}\\
&= \prod_{i<x<y\le n-j} p_{r_b(\si'(x-i)), r_b(\si'(y-i))} = 
\prod_{1\le x<y\le n-i-j}q_{\si'(x),\si'(y)}.
    \end{align*}
    The fact     that $\ep$-positive bias is maintained follows from $r_b$ being increasing. \qedhere
\end{enumerate}
\end{proof}

\subsection{Bounds on deviations of particle locations}

We begin by showing a geometric decay bound on the label of the particle at location~$1$. 

\begin{lemma}\label{l:stoch-dom-geo}
Suppose $\vec{p}$ is $\eps$-positively biased and $\vec{\ell}\in \mathcal L_n$. For every $k\ge 1$, 
    \begin{align*}
        \mu(\sigma(1)\notin [k] \mid \sigma \in \vec{\ell}\loc)\le \frac{1}{(1+\eps)^k}\,.
    \end{align*}
    Hence, the particle label at location $1$ is stochastically dominated by a $\text{Geom}(\frac{\eps}{1+\eps})$ random variable. 
\end{lemma}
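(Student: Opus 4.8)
The plan is to prove this by a Peierls-type argument: I would exhibit a weight-increasing injection from the ``bad'' event into $\vloc$. Fix $k\ge 1$, call a particle \emph{small} if its label lies in $[k]$ and \emph{large} otherwise, and set $D=\{\sigma\in\vloc:\sigma(1)\notin[k]\}$. It suffices to show $\mu(D)\le(1+\eps)^{-k}\mu(\vloc)$, since the stochastic domination statement is then immediate from the form of a geometric tail. For $\sigma\in D$, position $1$ carries a large particle, so the positions occupied by the $k$ small particles all lie in $\{2,\dots,n\}$; in particular every maximal run of consecutive small particles has a large particle immediately to its left. I would define $\Phi(\sigma)$ by shifting every such maximal run of small particles one position to the left --- that is, transposing each run past the large particle just to its left --- while keeping the relative order of the small particles and of the large particles unchanged. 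One checks that distinct runs cannot merge under this operation (the gap of larges separating two runs also shifts left), so $\Phi$ is well defined on $D$.

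Three properties are then needed. First, $\Phi(D)\subseteq\vloc$: shifting a run of length $r$ past its neighbouring large particle $L$ is a composition of $r$ adjacent transpositions, each swapping $L$ (in front) with a small particle (behind) --- an out-of-order pair, since $L$'s label exceeds $k$ --- so by \Cref{cl:ooo} every intermediate configuration, hence $\Phi(\sigma)$, remains in $\vloc$ (using $\vell\in\mathcal L_n$). Second, $\mu(\Phi(\sigma))\ge(1+\eps)^k\mu(\sigma)$: because small particles move strictly left and large particles move weakly right under $\Phi$, the only pairs whose relative order changes are the pairs $(L,s)$ where $s$ is small and $L$ is the large particle immediately to the left of the run containing $s$; each such pair's weight factor changes from $p_{L,s}$ to $p_{s,L}$, a gain of $p_{s,L}/p_{L,s}\ge 1+\eps$ by $\eps$-positive bias, and there is exactly one such pair per small particle, giving total gain $(1+\eps)^k$. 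Third, $\Phi$ is injective on $D$: from $\Phi(\sigma)$ one recovers the set of small-particle positions of $\sigma$ by shifting back one step, and the relative orders are unchanged, so $\sigma$ is determined. Combining these,
\[
\mu(D)\le(1+\eps)^{-k}\sum_{\sigma\in D}\mu(\Phi(\sigma))=(1+\eps)^{-k}\mu(\Phi(D))\le(1+\eps)^{-k}\mu(\vloc),
\]
which is the claim.

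The main obstacle, to the extent there is one, is getting the combinatorics of $\Phi$ exactly right: verifying that maximal runs shift independently without merging (so $\Phi$ is a genuine injection rather than merely many-to-one), pinning down precisely which inversions are created so that the weight gain is exactly $(1+\eps)^k$ and not something lossier, and noticing that it is the conditioning event $\{\sigma(1)\notin[k]\}$ that guarantees every run has a large particle to its left --- which is what makes both the shift and its \Cref{cl:ooo} certificate of remaining in $\vloc$ meaningful. Everything else is routine.
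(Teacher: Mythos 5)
Your proof is correct, and it takes a genuinely different route from the paper's. Both are Peierls-type arguments built around a weight-increasing map $\Phi$ on the bad set $D=\{\sigma\in\vloc:\sigma(1)\notin[k]\}$, but the maps differ structurally. The paper's $\Phi$ slides \emph{all} $k$ small particles leftward by the same amount $j-1$, where $j$ is the position of the leftmost small particle, so that $\Phi(\sigma)(1)\in[k]$; this map is many-to-one (any shift distance $j\ge 1$ could have produced a given image), so the paper's bound requires summing a geometric series $\sum_{j\ge1}(1+\eps)^{-jk}$ over shift distances to control $\mu(\Phi^{-1}(\sigma'))/\mu(\sigma')$, and then an algebraic simplification $1-(1+\sum_{j\ge 1}(1+\eps)^{-jk})^{-1}=(1+\eps)^{-k}$. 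Your $\Phi$ shifts each maximal run of small particles left by exactly one position, which makes $\Phi$ a genuine injection $D\hookrightarrow\vloc$ (invertible by shifting every run one step right), with a clean per-configuration weight gain of exactly $(1+\eps)^k$ and no series to sum. Both arrive at the same sharp bound $(1+\eps)^{-k}$, and both correctly invoke \Cref{cl:ooo} via adjacent out-of-order swaps to keep the image inside $\vloc$. The trade-off: your version has slightly lighter bookkeeping and only needs to inject into $\vloc$ rather than onto a designated complementary event, at the cost of the run-merging check (which you correctly flag and which the paper's uniform-shift map avoids having to think about). The inversion-counting in your weight argument is also exactly right --- each small particle crosses precisely the one large particle bordering its run on the left, and no large-large or other small-large pair changes relative order.
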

\begin{proof}
    Consider the following function $\Phi:S_n\to S_n$ which maps any $\si$ such that $\si(1)\nin [k]$ to a $\si'$ such that $\si'(1)\in [k]$ as follows: if the first particle in $[k]$ is at position $j$, then move all particles in $[k]$ $j-1$ spaces to the left. 
    Formally, if the first $k$ particles are in positions $j=i_1<\ldots <i_k$, then 
        \[
\Phi(\si) =\si \circ  \bigcirc_{\ell=0}^{j-2}\big((i_1-\ell-1,i_1-\ell)\circ\cdots \circ (i_k-\ell-1,i_k-\ell)\big)\,.
    \]
    (where the last index in the composition is the right-most one).
        We claim that if $\si\in \vloc$, then $\Phi(\si)\in \vloc$. This is because each transposition switches the order of two particles which are out of order: the particle on the left is $>k$, and the particle on the right is $\le k$.

    Observe that if $\si$ and $\si(i)>\si(i+1)$, then considering the only factors which do not cancel out in $\prod_{1\le i<j\le n}p_{\si(i)}p_{\si(j)}$,
    \begin{align}
    \label{e:one-transpose}
\fc{\mu(\si\circ (i,i+1))}{\mu(\si)} \ge \fc{p_{\si(i+1),\si(i)}}{p_{\si(i),\si(i+1)}} \ge 1+\ep.
    \end{align}
    Now using the fact that $\Phi^{-1}(\sigma)$ for $\sigma: \sigma(1)\in [k]$ partition $\{\sigma: \sigma(1)\notin [k]\}$,
    \[
\fc
{\mu(\si(1)\nin [k] \mid  \si\in \vloc)}
{\mu(\si(1)\in [k] \mid  \si\in \vloc)}
= 
\fc{\sum_{\si(1)\in [k],\si\in \vloc}\mu(\Phi^{-1}(\si)\cap \vloc)}{\sum_{\si(1)\in [k],\si\in \vloc}\mu(\si)}
\le \max_{\si\in \vloc} \fc{\mu(\Phi^{-1}(\si))}{\mu(\si)}\,.
    \]
    Using \eqref{e:one-transpose} and the fact that in each of those transpositions, we are switching the order of two particles which are out of order, we can bound the right-hand side by 
    \begin{align*}
\fc{\mu(\Phi^{-1}(\si))}{\mu(\si)} &= 
\fc{\sum_{j\ge 1} \mu\pa{\si\circ \bigcirc_{\ell=0}^{j-2}\big((i_1-\ell-1,i_1-\ell)\circ\cdots \circ (i_k-\ell-1,i_k-\ell)\big)}}{\mu(\si)} \\
&\le \sum_{j\ge 1}\rc{(1+\ep)^{jk}}.
    \end{align*}
    Therefore (using that $\mu(A)  r \ge \mu(A^c)$ implies $\mu(A)\ge 1/(1+r)$), 
    \begin{align*}
\mu(\si(1)\nin [k] \mid  \si\in \vloc)
\le 1-  \Big(1+\sum_{j\ge 1}\rc{(1+\ep)^{jk}}\Big)^{-1} = \rc{(1+\ep)^k}\,,
    \end{align*}
    as claimed. 
\end{proof}

We now prove that configurations from $\mu$ are $C_0 \log n\loc$ with high probability. Though this proof would hold even conditioned on $\vec{\ell}\loc$ for admissible $\vell$, we only ever use it and therefore prove it unconditionally; the conditional version would follow mutatis mutandis.

For this purpose, we begin with a corollary of Lemma~\ref{l:stoch-dom-geo} which inverts that lemma to show that the position of the first particle with index less than or equal to $k$ is stochastically below a geometric random variable. 

\begin{corollary}\label{cor:particle-location-stoch-dom-geo}
    Suppose $\vec{p}$ is $\eps$-positively biased and $\vec{\ell} \in \mathcal L_n$. For $k \ge 1$, let 
    \begin{align*}
       X = \min\{s: \sigma(s) \le k\} \,.
    \end{align*}
    Then, the law of $X$ under $\mu (\cdot \mid \vell\loc)$ is stochastically below a geometric random variable with success probability $1- (1+\eps)^{-k}$
\end{corollary}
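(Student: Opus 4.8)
The plan is to peel positions off the left of $\si$ one at a time, applying \Cref{l:stoch-dom-geo} at each step to the adjacent transposition chain restricted to the remaining positions. Write $\{X > s\}$ for the event $\{\si(1)\notin [k],\dots,\si(s)\notin[k]\}$, with $\{X>0\}$ the whole space, and telescope
\[
\mu\pa{X > s \mid \si \in \vloc} = \prod_{t=1}^{s} \mu\pa{X > t \mid X > t-1,\ \si \in \vloc}\,.
\]
It then suffices to bound each factor by $(1+\eps)^{-k}$: a $\mathrm{Geom}(1-(1+\eps)^{-k})$ variable $G$ (supported on $\{1,2,\dots\}$) has $\Prob{G > s} = (1+\eps)^{-ks}$, so the resulting bound $\mu(X>s\mid \si\in\vloc)\le (1+\eps)^{-ks}$ is exactly stochastic domination by $G$. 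The $t=1$ factor is $\mu(\si(1)\notin[k]\mid \si\in\vloc)$, which is precisely the bound of \Cref{l:stoch-dom-geo}.

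For $t\ge 2$, I would condition further on the identities $b=\si([t-1])$ of the particles in the first $t-1$ positions. On the event $\{X>t-1\}$ all of these have label $>k$, and on the event $\{\si\in\vloc\}$ the partial assignment $b$ is $\vec\ell$-localized, so \Cref{l:relabeling} applies with $A^c=[t-1]$ (that is, $i=t-1$, $j=0$): the pushforward $(R_b)_*\mu_{|b}$ is again an $\eps$-positively biased adjacent transposition measure on $S_{n-t+1}$, the event $\{\si\in\vloc\}$ becomes $\{R_b\si\in\vec\ell'\loc\}$ for some $(n-t+1)$-admissible $\vec\ell'$ with $\ell_{\max}^{\prime\pm}\le\ell_{\max}^{\pm}$, and, crucially, since none of the labels $1,\dots,k$ were removed they form the $k$ smallest elements of the codomain of the increasing bijection $r_b$, so $r_b(m)=m$ for all $m\le k$. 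Hence $\{\si(t)\notin[k]\}$ translates verbatim to $\{R_b\si(1)\notin[k]\}$, and \Cref{l:stoch-dom-geo} applied on $S_{n-t+1}$ gives $\mu(\si(t)\notin[k]\mid \si([t-1])=b,\ \si\in\vloc)\le (1+\eps)^{-k}$.

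Finally I would average over $b$: since $\{X>t-1\}\cap\{\si\in\vloc\}$ is the disjoint union, over $\vec\ell$-localized partial assignments $b$ of $[t-1]$ with all labels $>k$, of the events $\{\si([t-1])=b\}\cap\{\si\in\vloc\}$, the law of total probability gives $\mu(\si(t)\notin[k]\mid X>t-1,\ \si\in\vloc)\le (1+\eps)^{-k}$; and $\{X>t-1\}\cap\{\si(t)\notin[k]\}=\{X>t\}$, so this is exactly the $t$-th factor in the telescoped product. Multiplying the factors yields $\mu(X>s\mid\si\in\vloc)\le(1+\eps)^{-ks}$, which is the claim. The one delicate point is the bookkeeping in the middle step — verifying that conditioning on $\{\si([t-1])=b\}$ reduces to the smaller chain in the precise sense of \Cref{l:relabeling} (so that \Cref{l:stoch-dom-geo} is legitimately applicable with the induced admissible $\vec\ell'$) and that $r_b$ fixes the initial segment $[k]$ so the target event is preserved; everything else is routine telescoping and an identification of the geometric tail.
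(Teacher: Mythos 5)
Your proof is correct and takes essentially the same route as the paper's: telescope $\mu(X>s\mid\vloc)$ into conditional factors, reveal $\si([t-1])$, invoke \Cref{l:relabeling} to reduce to an $\eps$-positively biased chain on the remaining positions with an induced admissible localization, and apply \Cref{l:stoch-dom-geo} to each factor. The only thing you spell out more carefully than the paper is the observation that $r_b$ fixes $[k]$ pointwise (since all revealed labels exceed $k$), which is what makes $\{\si(t)\notin[k]\}$ translate verbatim to $\{R_b\si(1)\notin[k]\}$ — the paper compresses this to ``all particles $[k]$ still available,'' but your bookkeeping is a faithful unpacking of the same step.
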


\begin{proof}
    Consider the event that $X > s$. This is equivalent to $\{\sigma(1)\notin [k]\}\cap \cdots \cap \{\sigma([s])\notin [k]\}$. Thus, 
    \begin{align*}
        \mu(X > s \mid \vell\loc)  = \prod_{i=1}^s \mu(\sigma(i)\notin [k] \mid \sigma(1)\notin [k],....,\sigma(i-1)\notin [k],\vloc)\,.
    \end{align*}
    By increasing the conditioning to fully reveal the particle assignments at locations $1,...,i-1$, then the law on the remainder of the locations $[i,n]$ is an $\eps$-positively biased instance of the same problem, with all particles $[k]$ still available. Thus, by Lemma~\ref{l:relabeling}, each term above is equivalent to some $\mu'(\sigma(1)\notin [k] \mid \vell'\loc)$ for some $\mu'$ corresponding to a $\eps$-positively biased sequence $\vec{p}$, and an $n-i$-admissible $\vell'$. In particular, by Lemma~\ref{l:stoch-dom-geo},
    \begin{align*}
        \mu(X > s \mid \vell\loc) \le \frac{1}{(1+\eps)^{ks}}\,,
    \end{align*} 
    which implies the claimed stochastic domination. 
\end{proof}

\subsection{Typical configurations are logarithmically localized}

We can now prove \Cref{lem:stationary-highprob}.

\begin{proof}[\textbf{\emph{Proof of \Cref{lem:stationary-highprob}}}]
We use a coupon collecting argument to bound the location $r$ where $\si(r) = k$, showing that $r=k+O\pa{\rc{\ep}\log \pf{n}{\ep}}$ with probability at least $1-n^{-C}$ for arbitrarily large $C$.
Given $k$, let $X_0=0$ and $X_1, X_2,\ldots, X_k$ be such that 
\[
X_{s} = \min\set{r>X_{s-1}}{\si(r)\le k}.
\]
Note that $\si^{-1}(k)\le X_k$. 
We claim that conditional on $\sigma([X_{s-1}])$, the random variable $Y_s = X_s-X_{s-1}$ is stochastically dominated by a geometric random variable with success probability $1-\rc{(1+\ep)^{k+1-s}}$. Indeed, revealing $\sigma([X_{s-1}])$, the remaining particles to be assigned to $[X_{s-1}]^c$ have $k+1-s$ many of the letters of $[k]$. Relabeling that remainder by Lemma~\ref{l:relabeling} and applying Corollary~\ref{cor:particle-location-stoch-dom-geo}, we get the stochastic domination for $Y_s$. 

We then use the following tail bound on sums of independent geometric random variables. 

\begin{lemma}\label{lem:geometric-sum-tail}
    Let $Y_i\sim \mathsf{Geometric}(a_i)$ be independent for $i \in [k]$. Then letting $S=\sumo ik \rc{a_i}$, for some universal constant $C$, 
    \[
\Pj\Big({\sumo ik Y_i \ge S+u}\Big)
    \le     \exp\Big({-
    C\min\Big\{{
        \fc{u^2}{\sumo ik \fc{1-a_i}{a_i^2}},
        u\min_i a_i
    }\Big\}
    }\Big).
\]
\end{lemma}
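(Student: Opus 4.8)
The plan is to prove this as a Bernstein-type inequality via the exponential-moment (Chernoff) method, using the explicit moment generating function of a geometric variable. Throughout, $\mathsf{Geometric}(a)$ denotes the law on $\{1,2,\dots\}$ with $\mathbb P(Y=m)=(1-a)^{m-1}a$, so $\mathbb E Y=1/a$ and $\operatorname{Var}(Y)=(1-a)/a^2$; in particular $S=\mathbb E[\sum_{i=1}^k Y_i]$ and the quantity $V:=\sum_{i=1}^k\frac{1-a_i}{a_i^2}$ in the bound equals $\operatorname{Var}(\sum_{i=1}^k Y_i)$. The first step is to record the cumulant generating function: $\psi_i(\lambda):=\log\mathbb E[e^{\lambda Y_i}]=\log a_i+\lambda-\log(1-(1-a_i)e^\lambda)$ is finite for $\lambda<\log\frac1{1-a_i}$, with $\psi_i(0)=0$, $\psi_i'(0)=1/a_i$, and $\psi_i''(\lambda)=\frac{(1-a_i)e^\lambda}{(1-(1-a_i)e^\lambda)^2}$. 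Since $\log\frac1{1-a_i}\ge a_i$, the interval $\lambda\in[0,a_i/2]$ lies inside the domain of $\psi_i$.

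The heart of the argument is the variance-proxy bound
\[
\psi_i(\lambda)-\tfrac{\lambda}{a_i}\ \le\ \tfrac{16(1-a_i)}{a_i^2}\,\lambda^2\qquad\text{for } 0\le\lambda\le a_i/2.
\]
I would obtain this from a uniform estimate $\psi_i''(\lambda)\le 32(1-a_i)/a_i^2$ on $[0,a_i/2]$ together with Taylor's theorem with integral remainder: since $\psi_i(0)=0$ and $\psi_i'(0)=1/a_i$, one has $\psi_i(\lambda)-\lambda/a_i=\int_0^\lambda(\lambda-s)\psi_i''(s)\,ds\le\tfrac{\lambda^2}{2}\sup_{[0,a_i/2]}\psi_i''$. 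For the second-derivative bound I would treat numerator and denominator of $\psi_i''$ separately: since $\lambda\le a_i/2\le\tfrac12$ one has $(1-a_i)e^\lambda\le 2(1-a_i)$ in the numerator, while $(1-a_i)e^\lambda\le e^{-a_i}e^{a_i/2}=e^{-a_i/2}$ (using $1-a_i\le e^{-a_i}$) gives $1-(1-a_i)e^\lambda\ge 1-e^{-a_i/2}\ge a_i/4$ in the denominator (using $1-e^{-x}\ge x/2$ on $[0,1]$).

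Finally I would tensorize and optimize. Let $m:=\tfrac12\min_i a_i$. For $0\le\lambda\le m$ every $\lambda\le a_i/2$, so by independence and the previous step $\log\mathbb E[e^{\lambda(\sum_i Y_i-S)}]=\sum_i(\psi_i(\lambda)-\lambda/a_i)\le 16V\lambda^2$, and Markov's inequality gives $\mathbb P(\sum_i Y_i-S\ge u)\le\exp(-\lambda u+16V\lambda^2)$ for all $0\le\lambda\le m$. Optimizing the exponent over this range: if $u/(32V)\le m$, take $\lambda=u/(32V)$ to obtain $\exp(-u^2/(64V))$; otherwise $32Vm<u$, so taking $\lambda=m$ the exponent is $-mu+16Vm^2<-mu/2$, giving $\exp(-u\min_i a_i/4)$. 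Combining the two cases, and using $\tfrac14\ge\tfrac1{64}$, yields the claim with $C=1/64$.

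I expect the only genuinely delicate point to be the second-derivative bound when $a_i$ is close to $1$: there the naive estimate $\psi_i''(\lambda)\lesssim 1/a_i^2$ is too lossy, since it misses the factor $1-a_i$ that is essential for the bound to scale with the variance proxy $V$; the fix is precisely to retain the $1-a_i$ coming from the numerator of $\psi_i''$ rather than absorb it into a crude bound on $e^\lambda$. Everything else (the MGF identity, the Taylor remainder, and the Chernoff optimization) is routine.
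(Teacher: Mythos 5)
Your proof is correct and follows essentially the same route as the paper: compute the cumulant generating function of a geometric variable, bound its deviation from linear by a variance-proxy $\frac{1-a_i}{a_i^2}\lambda^2$ on $[0,a_i/2]$, tensorize, and optimize the Chernoff exponent over the common interval $[0,\tfrac12\min_i a_i]$. The paper states the key CGF estimate as a Taylor expansion with a uniform big-$O$ constant and leaves the details implicit; your second-derivative bound via $1-(1-a)e^\lambda \ge a/4$ is precisely the clean justification of that uniformity.
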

\begin{proof}
The moment generating function of $Y\sim \mathsf{Geometric}(a)$ is $\E [e^{tX}] = \fc{ae^t}{1-e^{t}(1-a)}$. 
We have by Taylor expansion that for $t<\fc a2$,
\[
\E e^{tX-\fc ta} = 
e^{t(1-\rc a) - \log \pa{1-(e^t-1) \pf{1-a}{a}}} = 
e^{O\pa{t^2\pa{\rc a -1 + \pa{\rc a -1}^2}}} = 
e^{O\pa{t^2\pa{\fc{1-a}{a^2}}}}\,,
\]
where the constant in the big-$O$ is uniform in $(a_i)_i$ and $k$.
Hence,
\begin{align*}
    \Pj\Big({\sumo ik X_i \ge S+u}\Big)
    \le \fc{\E e^{t\sumo ik X_i}}{e^{t(S+u)}}
    \le 
    e^{O\pa{t^2\sumo ik \fc{1-a_i}{a_i^2}
        }-tu}\,.
        \end{align*}
Optimizing over $t\le \rc 2 \min a_i$ gives the lemma.
\end{proof}

We can now complete the proof of the high probability properties of the stationary distribution stated in \Cref{lem:stationary-highprob}.

In our case, for $a_i = 1-\rc{(1+\ep)^i}$ for $1\le i\le k$, 
splitting into the cases $i\le \log \rc{\ep}$ and $i>\log \rc{\ep}$, we get that the contribution to $\sumo ik \fc{1-a_i}{a_i^2}$ is $O\prc{\ep^2}$ and $O\prc{\ep}$, respectively. Note
\begin{align*}
\sumo ik \rc{a_i} = 
\sumo i{\fl{1/\ep}} \rc{a_i} + 
\sum_{i=\fl{1/\ep}+1}^k \rc{a_i}
&= 
\sumo i{\fl{1/\ep}} O\pf{1}{i\ep} + 
\sum_{i=\fl{1/\ep}+1}^k \ba{1+O\pa{\prc{1+\ep}^{i-\fl{1/\ep}}}} \\
&=
O\pa{\rc{\ep} \log\prc{\ep}} + \pa{k+O\prc{\ep}}.
\end{align*}
Hence, by Lemma~\ref{lem:geometric-sum-tail}, there is $C$ so that for any $0<\de<1$,
\[
\mu\Big({
\si^{-1}(k) \ge k+\fc{C}{\ep}\log \big(\rc{\ep\de}\big)
}\Big)\le \fc\de2.
\]
For $k'=k-\fc{C}{\ep}\log \prc{\ep\de}$ for appropriate $C$, by a complimentary stochastic domination argument using Lemma~\ref{l:relabeling} and Corollary~\ref{cor:particle-location-stoch-dom-geo}, we have 
\begin{align*}
    \mu(\si^{-1}(k) > k') 
    &\ge 
    \prodo i{k'} \mu(\si(i)\in [k-1] \mid \si(1)\in [k-1],\ldots, \si(i-1)\in  [k-1])\\
    &\ge \prodo i{k'} \pa{1-\rc{(1+\ep)^{k-i}}}
    \ge 1-\sumo i{k'} \rc{(1+\ep)^{k-i}}
    \ge 1-\sum_{i=k-k'}^\iy \rc{(1+\ep)^i} \ge 1-\fc\de2.
\end{align*}
By a union bound, 
\[
\mu\pa{
|\si^{-1}(k)-k| \ge \fc{C}{\ep}\log \pf{n}{\ep\de}
}\le \de\,.
\]
Taking $\delta$ to be an arbitrarily small polynomial in $n$ concludes the proof of \Cref{lem:stationary-highprob}.
\end{proof}

\subsection{Spatial mixing for localized configurations}\label{sub:spatial-mixing-localized}

It remains to use a coupling argument to establish Lemma~\ref{lem:spatial-mixing}. As described in the proof sketch, this will rely on the notion of disconnecting points which isolate influence from the configuration on one side of the point from the other.   
Recall from Definition~\ref{def:disconnecting}, for a permutation $\si\in S_n$ and a position $k\in [n]$, we refer to position $k$ as \emph{disconnecting} if $\si([k])=[k]$ as unordered sets. 

The following lemma shows that disconnecting points are common in intervals of length at least  $n_0=  e^{O(1/\eps)}$.  A simpler form of the following lemma was stated in \Cref{lem:prob-of-good-point-simpler}.

\begin{lemma}\label{lem:prob-of-good-point}
The following hold for all $\eps$-positively biased and all $\vec{\ell}\in \mathcal L_n$.
\begin{enumerate}
    \item For any $k\in [n]$,  $\mu(k\text{ disconnecting for }\si\mid \si\in \vloc)\ge e^{-O(1/\ep)}$.
    \item 
    Let $A^c=[i]\cup [n-j+1,n]$. 
    For $k\in [i+\ell_{\max}^-    , n-j-\ell_{\max}^+]$ and any $b:A^c\to [n]$ that is the restriction of some $\si\in \vloc$, 
    \[
\mu(k\text{ disconnecting for }\si\mid \si\in \vloc, \si(A^c)=b)\ge e^{-O(1/\ep)}. 
    \]
\end{enumerate}
\end{lemma}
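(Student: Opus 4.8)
The target is a lower bound of $e^{-O(1/\eps)}$ on the probability that a given position $k$ is disconnecting, i.e.\ that $\si([k]) = [k]$ as unordered sets, conditioned on $\si\in\vloc$ (and possibly further on a boundary configuration $b$). My plan is to reduce both parts to a single clean statement about the particle label at a fixed location. The key observation is that $\{k \text{ disconnecting for }\si\}$ is equivalent to the statement that among the first $k$ locations there are exactly the $k$ smallest labels; by inclusion of events, this contains the event that the label at location $k$ equals $k$ \emph{and} (recursively) that location $k$ is disconnecting for the induced problem. More directly, $k$ is disconnecting iff $\si^{-1}(k')\le k$ for all $k'\le k$, equivalently $\max_{k'\le k}\si^{-1}(k') = k$, equivalently $\si(k)\le k$ together with $\si([k-1])\subseteq[k]$. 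So I will set up an inductive/telescoping argument: reveal the particle at location $1$, then location $2$, and so on, using the relabeling Lemma~\ref{l:relabeling} at each step to reduce to a smaller $\eps$-positively biased instance with an admissible locality vector.

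Concretely, for part (1): write $\{k\text{ disconnecting}\} = \bigcap_{s=1}^{k}\{\si(s)\in[k]\setminus\{\si(1),\dots,\si(s-1)\}\}$, which since we are asking for the first $k$ locations to hold the first $k$ labels, is the event $\bigcap_{s=1}^k\{\si(s)\le k\}$ intersected appropriately — actually more simply, conditioning successively, $\mu(k\text{ disc.}\mid\vloc) = \prod_{s=1}^{k}\mu(\si(s)\in[k]\setminus\{\si(1),\dots,\si(s-1)\}\mid \si([s-1]),\vloc)$, where at stage $s$ there are $k-s+1$ of the ``small'' labels still unplaced among $n-s+1$ locations. By Lemma~\ref{l:relabeling}, after revealing $\si([s-1])$ the conditional law is a fresh $\eps$-positively biased instance on $n-s+1$ locations with an admissible $\vec\ell'$, and the event in question is exactly $\{\si'(1)\le k-s+1\}$ in that instance. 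By Lemma~\ref{l:stoch-dom-geo}, $\mu'(\si'(1)\le k-s+1\mid\vec\ell'\loc) \ge 1 - (1+\eps)^{-(k-s+1)}$. Hence
\[
\mu(k\text{ disconnecting}\mid\vloc)\ \ge\ \prod_{s=1}^{k}\Big(1-\frac{1}{(1+\eps)^{k-s+1}}\Big)\ =\ \prod_{m=1}^{k}\Big(1-\frac{1}{(1+\eps)^{m}}\Big)\ \ge\ \prod_{m=1}^{\infty}\Big(1-\frac{1}{(1+\eps)^{m}}\Big),
\]
and the infinite product is bounded below by $e^{-O(1/\eps)}$ (split at $m\approx 1/\eps$: the first $O(1/\eps)$ terms each contribute a factor bounded below by a constant-to-the-$1/\eps$ since $(1+\eps)^m$ is bounded, and the tail converges since $\sum_{m>1/\eps}(1+\eps)^{-m} = O(1)$, so $-\sum\log(1-(1+\eps)^{-m}) = O(1/\eps)$). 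This gives part (1).

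For part (2), the same telescoping works but I must be careful that revealing $\si(A^c)=b$ first, and then revealing locations $i+1,\dots,k$ of $A$, stays within the scope of Lemma~\ref{l:relabeling}: after conditioning on $b$, we have by that lemma an $\eps$-positively biased instance on $[i+1,n-j]$ (length $n-i-j$) with an admissible $\vec\ell'$ satisfying $\ell_{\max}'^{\pm}\le\ell_{\max}^{\pm}$; the hypothesis $k\in[i+\ell_{\max}^-,\,n-j-\ell_{\max}^+]$ is precisely what ensures (via part~1 of Lemma~\ref{l:relabeling}, the ``labels preserved away from the boundary'' statement) that the labels $\{i+1,\dots,k\}$ are exactly the relabeled versions of the $k-i$ smallest available labels, so that ``$k$ disconnecting'' translates correctly into the event that the first $k-i$ locations of the relabeled instance hold its $k-i$ smallest labels. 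Then I apply the argument of part~(1) verbatim to that relabeled instance, getting $\prod_{m=1}^{k-i}(1-(1+\eps)^{-m})\ge e^{-O(1/\eps)}$. The main obstacle, and the step I would be most careful writing out, is exactly this bookkeeping of which labels are ``small'' after relabeling near the boundary — making sure the relabeling map $r_b$ sends an initial segment of $[n-i-j]$ onto $[i+1,k]\setminus b(A^c)\cup(\text{stuff})$ correctly, so that the disconnecting event really is the ``first $k-i$ locations contain the $k-i$ smallest labels'' event in the reduced instance. Everything else is the geometric stochastic-domination input of Lemma~\ref{l:stoch-dom-geo} applied repeatedly, plus the elementary infinite-product estimate.
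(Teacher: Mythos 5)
Your proposal follows essentially the same route as the paper: telescoping $\{k\text{ disconnecting}\} = \bigcap_{s\le k}\{\si(s)\le k\}$, revealing positions one at a time, applying Lemma~\ref{l:relabeling} to get a fresh $\eps$-positively biased admissible instance, invoking Lemma~\ref{l:stoch-dom-geo} at each step, and for part (2) relabeling via $R_b$ and then checking that the disconnecting event translates correctly. One small caution: your justification of $\prod_{m\ge1}(1-(1+\eps)^{-m}) \ge e^{-O(1/\eps)}$ is not quite right as stated — the early terms with $m\ll 1/\eps$ are of size $\approx m\eps$, not bounded below by a constant, and the tail sum $\sum_{m>1/\eps}(1+\eps)^{-m}$ is $\Theta(1/\eps)$, not $O(1)$; the bound $e^{-O(1/\eps)}$ is still correct, but it comes from $\prod_{m\le 1/\eps}(m\eps/2)\sim (1/\eps)!\,(\eps/2)^{1/\eps}= e^{-O(1/\eps)}$ via Stirling, together with the tail contributing an additional $e^{-O(1/\eps)}$. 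The other point you flag — verifying that $k$ disconnecting for $\si$ corresponds to $k-i$ disconnecting for $R_b\si$ — is indeed the substantive step of part (2), and the paper's argument is exactly the one you gesture at, using $\si([i])=b([i])\subseteq[k]$ (from $k\ge i+\ell_{\max}^-$) and $r_b([k-i])\subseteq[k]$ (from $k\le n-j-\ell_{\max}^+$).
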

\begin{proof}We prove the lemma item by item. 
\begin{enumerate}
    \item The probability we are interested in can be written as 
    \begin{align*}
        \prod_{i=1}^{k}  
        \mu(\si(i)\le k\mid\si\in \vloc; \si(1)\le k, \ldots, \si(i-1)\le k) \,.
    \end{align*}
    By revealing $\sigma(1),...,\sigma(i-1)$ and applying Lemma~\ref{l:relabeling}, each of the terms in the product, Lemma~\ref{l:stoch-dom-geo} are lower bounded by $1-\frac{1}{(1+\eps)^{k+1-i}}$, so that 
    \begin{align*}
       \mu(k\text{ disconnecting for }\si\mid \si\in \vloc) \ge  \prod_{i=1}^{k} \Big({1-\rc{(1+\ep)^{k+1-i}}}\Big)
                        = e^{-O(1/\ep)}.
    \end{align*}
    \item 
    Using the notation of \Cref{l:relabeling}, by that lemma  $(R_b)_*\mu$
    is a measure on biased permutations that satisfies $\ep$-positive bias and monotonicity. 
    Applying item 1, if we let $\sigma' = R_b \sigma$, we obtain 
    \[
((R_b)_*\mu)(k-i\text{ disconnecting for }\si'\mid \si'\in R_b\vloc)\ge e^{-O(1/\ep)}\,.
    \]
    It remains to show that if $k-i$ is disconnecting for $\si'=R_b\si$ and $\si(A^c)=b$, then $k$ is disconnecting for $\si$. To show this claim, note that we have $\si([i])=b([i])\subeq [i+\ell_{\max}^-]\subeq [k]$ and by $k-i$ being disconnecting for $\sigma'$, 
    \[
    [k-i] \supeq (\si')^{-1}([k-i]) = \si^{-1}(r_b([k-i]))-i\,,
    \]
    so $\si^{-1}(r_b([k-i]))\subeq [k]$. 
    Note $r_b([k-i])\subeq [k]$ because $k\le n-j-\ell_{\max}^+$. 
    Finally, note that $r_b([k-i])$ and $b([i])$ are disjoint subsets of $[k]$ and the union has $k$ elements, so the union equals $[k]$, and we obtain $\si^{-1}([k])=k$. \qedhere
\end{enumerate}
                                \end{proof}

We can now use disconnecting points to prove the spatial mixing lemma.

\begin{proof}[\textbf{\emph{Proof of \Cref{lem:spatial-mixing}}}]
We construct a coupling of the distributions. The idea is to sample independently and look for common disconnecting points between the two configurations, revealing from the outside-inwards. On the one hand, we have an independent attempt every $\ell_{\max}$ indices, and on the other upon encountering a pair of common disconnecting points, the marginal distribution on the interval between them is identical.

Formally, we proceed as follows.
First, by rounding down $r$ and adjusting the constant as necessary, it suffices to consider $r=r'\ell_{\max}$. 
Consider independently sequentially sampling, 
\begin{align*}
    \si(i+k) \,&\mid\,\si(A^c)=\eta,\, \si({[i+1,i+k-1]}), \,\si\in \vloc\,,\\
    \bar\si(i+k) \,&\mid\,\bar\si(A^c)=\bar\eta, \,\bar\si([i+1,i+k-1]), \bar\si\in \vloc\,,
\end{align*} 
until we arrive at a $k$ such that $k=k'\ell_{\max}$, $k'\le r'$ such that $i+k$ is disconnecting for both $\si$ and $\bar\si$ (and declare failure otherwise). Note that this event is measurable with respect to the sigma-algebras generated by $\sigma([i+k]),\bar \sigma([i+k])$. Conditioned on $\si(A^c)=\eta$, $\si({i+1,i+k'\ell_{\max}})$, and $\si\in \vloc$, the probability that $i+(k'+1)\ell_{\max}$ is disconnecting is at least $\rho := e^{-O(1/\ep)}$ by item (2) of \Cref{lem:prob-of-good-point}. 
Thus, the probability that we  fail to find such a disconnecting point for both $\si$ and $\bar\si$ is $(1-\rho^2)^{k'} = e^{-e^{-O(1/\ep)}r/\ell_{\max}}$.

Supposing that $i+k$ where $k=k'\ell_{\max}$ is disconnecting for both $\si$ and $\bar\si$, we now independently sequentially sample in reverse,
\begin{align*}
\si(n-j+1-m) \,&\mid\, \si(A^c) = \eta, \,\si([i+1,i+k]\cup [n-j+2-m,n-j]), \, \si\in \vloc\\
\bar\si(n-j+1-m) \,&\mid\, \bar\si(A^c) = \bar\eta,\,  \bar\si\pa{[i+1,i+k]\cup [n-j+2-m,n-j]},\, \bar\si\in \vloc.
\end{align*}
Define $m$ be reverse-disconnecting for $\si$ if $\si([m,n]) = [m,n]$ as an unordered set. The same proof as in \Cref{lem:prob-of-good-point} and as above shows that the probability we fail to find a reverse-disconnecting point $n-j+1-m$ where $m=m'\ell_{\max}$, $m'\le r'$ for both $\si$ and $\bar\si$ with probability $e^{-e^{-O(1/\ep)}r/\ell_{\max}}$. Note by the upper bound assumption on $r$, there is a buffer of size at least $\ell_{\max}$ as necessary for \Cref{lem:prob-of-good-point}. 

If we have found both a disconnecting point $i+k$ and reverse-disconnecting point $n-j+1-m$, then we have $[i+k]=\si([i+k])=\bar\si([i+k])$, and $[n-j+1-m,n] = \si([n-j+1-m,n])=\bar\si([n-j+1-m,n])$. Letting $B=[n]\bs ([k]\cup [n-j+1-m,n])$, 
we note $B\supeq A_r$ and  the distribution of $\si(B)$, $\bar \si (B)$ conditioned on the set values (and $\si,\bar\si\in \vloc$) is the same. 
        
Finally, note that if either $i=0$ or $j=0$, then there are no boundary conditions on that side, and it suffices to find a disconnecting point on the opposite side.
\end{proof}

\section{Burn-in via dominating exclusion processes}\label{sec:burn-in}

Our aim in this section is to use dominating ASEP processes to establish the following burn-in type of estimates to get to $\ell_0\loc$ where $\ell_0 = C_0(\ep) \log n$.

Our main result in this section is that for  $\vec{\ell}\in \mathcal L_n$, the adjacent transposition chain $\Mnn$ restricted to $\vec{\ell}\loc$ for $\vec{\ell}\in \mathcal L_n$ reaches $\ell_0{\loc}$ and stays there for arbitrary long polynomial timescales. In particular, this holds for the unrestricted dynamics.

\begin{lemma}\label{lem:single-site-burn-in}
Consider $\Mnn$ with parameters $\vec p$ satisfying $\ep$-positive bias. 
    There exist $    C_0(\varepsilon) = O (1/\eps)$ and 
    $n_0$ depending on $\ep$ 
    such that for all $n\ge n_0$ and any $\vec{\ell}\in \mathcal L_n$, $\Mnn$ restricted to $\vec{\ell}\loc$ is such that for $t\ge C_0(\ep) n^2$ the chain is in $(C_0(\ep)\log{n})\loc$  except with probability $e^{- \Omega(1/\eps)} n^{- 10}$. 
\end{lemma}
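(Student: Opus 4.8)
The plan is to control, for each $k\in[n]$ separately, the ``$k$-profile'' $\zeta^{(k)}_t:=(\mathbf 1\{\sigma_t(i)\le k\})_{i\in[n]}$, a configuration of $k$ indistinguishable particles on $[n]$, by dominating it with a genuine $k$-particle ASEP. First I would reduce to a one-sided statement. Writing $\ell_0:=C_0\log n$, one checks that $\sigma\notin\ell_0\loc$ on the ``right side'' (i.e.\ some $j$ has $\sigma^{-1}(j)>j+\ell_0$) holds exactly when some profile fails, in the sense that the rightmost particle of $\zeta^{(k)}$ lies beyond $k+\ell_0$ for some $k$. Moreover the position-and-label reversal $\sigma\mapsto\widetilde\sigma$, $\widetilde\sigma(i)=n+1-\sigma(n+1-i)$, sends an $\eps$-positively biased, $\vell$-admissible instance of $\Mnn$ restricted to $\vloc$ to another such instance, commutes with the dynamics, and preserves $\ell_0\loc$, so a ``left-side'' failure for $\sigma$ is a right-side failure for $\widetilde\sigma$. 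Hence it suffices to bound, for each $k$ and each of the two instances, the probability that the rightmost particle of $\zeta^{(k)}_t$ exceeds $k+\ell_0$, then union bound.

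The core step is the coupling. Set $q:=\tfrac{1+\eps}{2+\eps}\in(\tfrac12,1)$. Across any edge $(i,i+1)$ whose two occupants have labels on opposite sides of $k$, the $\Mnn$ move that carries the smaller label (which is $\le k$) one position to the left swaps two out-of-order particles, hence by \Cref{cl:ooo} is \emph{never} rejected by the restriction to $\vloc$, and it has probability $p_{\text{small},\text{large}}\ge q$; the reverse move has probability $\le 1-q$ and may additionally be rejected. Thus, uniformly over edges, the $k$-profile of the restricted chain jumps a particle left with probability $\ge q$ and right with probability $\le 1-q$. Running the $k$-particle ASEP $\xi_t$ with left-bias exactly $q$, started from the all-the-way-right configuration $\{n{-}k{+}1,\dots,n\}$, driven by the same sequence of uniformly random edges and synchronized marks, the basic (attractive) coupling of exclusion dynamics maintains $\zeta^{(k)}_t\preceq\xi_t$ for all $t$, where $\preceq$ compares sorted particle positions coordinatewise; initialization holds since $\xi_0$ is the maximal $k$-particle configuration. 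In particular the rightmost particle of $\zeta^{(k)}_t$ is at most that of $\xi_t$, so it remains only to analyze $\xi$.

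For the constant-bias ASEP $\xi$ I would import known results: by~\cite{benjamini2005mixing} its mixing time is $O_\eps(n^2)$ (the constant being $O(1/\eps)$), and its stationary law on $[n]$ has explicit geometric right-tails, under which the event $\{$rightmost particle $\le k+m\}$, which equals $\{\xi\preceq(m{+}1,\dots,m{+}k)\}$, has complement probability at most $n^{O(1)}(1+\eps)^{-m}$. Running $\xi$ from $\{n{-}k{+}1,\dots,n\}$ for $T:=C_0(\eps)n^2$, which exceeds the mixing time, its rightmost particle is then within $\tfrac{\ell_0}{2}$ of $k$ with probability $\ge\tfrac12$; from that configuration a gambler's-ruin/drift estimate (the rightmost coordinate carries a downward drift of order $\eps/n$) shows it stays $\le k+\ell_0$ thereafter except with probability $(1+\eps)^{-\Omega(\ell_0)}$. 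Choosing $C_0(\eps)$ a large enough multiple of $1/\eps$ makes this at most $n^{-12}$ for $n\ge n_0(\eps)=e^{O(1/\eps)}$. A union bound over $k\in[n]$ and over the two instances gives $\Prob{\sigma_t\notin\ell_0\loc}\le 2n^{-11}\le e^{-\Omega(1/\eps)}n^{-10}$ for every $t\ge C_0(\eps)n^2$ and every $n\ge n_0(\eps)$; since the bound is uniform in $t$, this is exactly the ``reaches $\ell_0\loc$ and stays there'' assertion, and the same argument applied after relabeling via \Cref{l:relabeling} to each sub-interval $[i+1,n-j]$ would give the block-dynamics variant \Cref{lem:restricted-block-dynamics-burn-in}.

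Two points require care. First, the interaction of the $\vloc$-restriction with the domination: this is precisely why the argument is routed through \Cref{cl:ooo}, which guarantees that the ``helpful'' moves (small labels advancing toward the front) are never rejected, while any rejected move only keeps the profile further to the left, so the one-sided domination survives intact. Second — and this is the genuinely quantitative obstacle — one must obtain a polynomially small ($\sim n^{-10}$) failure probability after only $O_\eps(n^2)$ steps, rather than the $O_\eps(n^2\log n)$ that a crude ``mixing time $\times\log n$'' bound would give; this exploits that once the ASEP is roughly in place, its rightmost particle is pinned by a bias of order $\eps$, so its overshoot above $k$ has an exponential tail and a window of width $\ell_0=\Theta(\eps^{-1}\log n)$ already suffices. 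Everything else — the reformulation through profiles, the reversal symmetry, and the attractiveness of the exclusion coupling — is standard.
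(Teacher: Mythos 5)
Your overall strategy is the same as the paper's: project onto $k$-profiles, dominate each by a $k$-particle ASEP (using \Cref{cl:ooo} to handle the $\vloc$-restriction via the observation that swaps of out-of-order particles are never rejected), combine with the stationary tail of the rightmost ASEP particle, and symmetrize for the left-side failures. Those ingredients are sound and are exactly what \Cref{lem:domination-by-ASEP}, \Cref{lem:comparison-to-ASEP-ell-restricted}, and \Cref{lem:ASEP-stationary-estimate} provide.

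However, the step where you go from ``mixing time $O_\eps(n^2)$'' to a $n^{-12}$-level failure probability has a genuine gap. You argue that at time $T = C_0(\eps) n^2$, the rightmost particle of the ASEP is within $\ell_0/2$ of $k$ ``with probability $\ge 1/2$,'' and then invoke a gambler's-ruin/drift estimate to keep it below $k+\ell_0$ thereafter with probability $1 - (1+\eps)^{-\Omega(\ell_0)}$. But this only bounds the failure probability at any later time by roughly $\mathbb P(\text{not localized at } T) + \mathbb P(\text{escape})$, i.e.\ by about $\tfrac12 + n^{-12}$ --- nowhere near $n^{-11}$. The drift argument cannot repair the $\tfrac12$, because it is conditional on the good event at time $T$. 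Moreover, the claimed downward drift ``of order $\eps/n$'' for the rightmost particle is itself delicate: near the bulk the rightmost particle is frequently blocked by an occupied site to its left, so it is not simply a biased nearest-neighbor walk, and a clean $(1+\eps)^{-\Omega(\ell_0)}$ escape bound would require more argument.

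The paper closes precisely this gap differently: \Cref{lem:ASEP-mixing-time} gives not only a mixing-time bound $O_\eps(n^2)$ but also an inverse spectral gap of $O(n/\eps')$, so after $T_1 = C(\eps)n^2 + r n\log n$ steps the total-variation distance of the ASEP from its stationary law is already $n^{-12}$ (the extra $O(n\log n)$ is absorbed into the $n^2$ term for large $n$). Combined with the geometric stationary tail, this gives the $n^{-12}$ per-$k$ bound directly, with no drift argument needed. If you want to keep your route, you would need to replace the ``$\ge 1/2$'' step with a quantitative hitting estimate that gets the rightmost particle below $k+\ell_0/2$ by time $C_0(\eps)n^2$ with probability $1 - n^{-\Omega(1)}$ from any start; that is doable but is essentially re-deriving the spectral-gap improvement by hand.
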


Note, \Cref{lem:single-site-burn-in} is a more general form of \Cref{lem:single-site-burn-in-simpler}.
We also need a form of this burn-in statement for a block version of the dynamics where in each time-step, $B \in \{B_W,B_E\}$ is chosen randomly, and the configuration $\sigma(B)$ is resampled conditionally on $\sigma(B^c)$.

\begin{lemma}\label{lem:restricted-block-dynamics-burn-in}
There exists $C_1>0$,  and for any $\ep$, there exist $C_0(\eps) = O(1/\eps)$ and $n_0 = O(\log (1/\eps)/\eps)$ such that the following holds for all $\vec{p}$ satisfying $\eps$-positive bias and $\vell\in \mathcal L_n$. 
    The block dynamics restricted to $\vec{\ell}\loc$ is such that for 
    any $n\ge n_0$  and 
    any $t \ge C_1  \log n$,  
    the chain is in $(C_0(\eps)\log{n})\loc$ except with probability $e^{- \Omega(1/\eps)}n^{-10}$. 
    \end{lemma}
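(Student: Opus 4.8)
The plan is to prove that, started from an arbitrary configuration, the restricted block dynamics enters $(C_0\log n)\loc$ within $O(\log n)$ steps with probability $1-e^{-\Omega(1/\eps)}n^{-10}$, and then does not leave it over the relevant (polynomial) timescale; this is the block analogue of \Cref{lem:single-site-burn-in}, but now a single resampling does a great deal more work. First I would note that, since each step picks $B_W$ or $B_E$ uniformly, within $t\ge C_1\log n$ steps each of $B_W,B_E$ is picked at least $\Omega(C_1\log n)$ times except with probability $e^{-\Omega(C_1\log n)}\le n^{-10}$ for $C_1$ a large absolute constant; so it suffices to analyze the effect of a short prescribed pattern of resamplings, plus to check that once inside $(C_0\log n)\loc$ every further resampling keeps us there.

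The engine is a single-update estimate. Fix $B\in\{B_W,B_E\}$ and a boundary configuration $b$ on $B^c$. By \Cref{l:relabeling}, the law of the resampled $\sigma(B)$, pushed forward by the relabeling $R_b$, is $(R_b)_*\mu(\cdot\mid\vec\ell'\loc)$ for an $\eps$-positively biased instance on $|B|$ sites with an admissible $\vec\ell'$. The conditional version of \Cref{lem:stationary-highprob} (which follows mutatis mutandis from \Cref{cor:particle-location-stoch-dom-geo}) then gives that, except with probability $e^{-\Omega(1/\eps)}n^{-10}$, $R_b\sigma$ is $(C_0\log n)$-localized; undoing $R_b$, this says the resampled $\sigma(B)$ places the particles available to $B$ into the positions of $B$ in near-sorted order, up to a $C_0\log n$ displacement. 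In particular, if $b$ itself is $(C_0\log n)$-localized, then the set of labels available to $B$ is an interval up to a $C_0\log n$ error, so the whole block $B$ becomes $(C_0\log n)$-localized.

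To reach $(C_0\log n)\loc$ I would follow the pattern $B_W,B_E,B_W$, which occurs within the first $C_1\log n$ steps with the claimed probability. After the first $B_W$ update, positions $[1,\lceil 2n/3\rceil]$ hold the particles available to $B_W$ in near-sorted order, so the only labels that can still be far from their ground-state positions are those sitting, unchanged, in $[\lceil 2n/3\rceil+1,n]$; in particular positions $[1,\lfloor n/3\rfloor-1]$ now contain only the smallest available labels, hence (for $n$ large) none of $[\lceil 2n/3\rceil+1,n]$, so all of the latter block is available to the subsequent $B_E$ update. Since the $B_E$ resampling places the available particles in near-sorted order into $[\lfloor n/3\rfloor,n]$, and the $|B_W^c|$ largest labels available to it are exactly $[\lceil 2n/3\rceil+1,n]$, after the $B_E$ update positions $[\lceil 2n/3\rceil+1,n]$ hold $[\lceil 2n/3\rceil+1,n]$ up to a $C_0\log n$ error; thus $B_W^c$ is now $(C_0\log n)$-localized, the final $B_W$ update sees a $(C_0\log n)$-localized boundary, and by the single-update estimate the whole configuration is $(C_0\log n)$-localized (enlarging $C_0$ by an absolute factor to absorb the accumulated slacks). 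The ``stays there'' half is the same estimate iterated: once inside $(C_0\log n)\loc$, every boundary seen by a resampling is $(C_0\log n)$-localized, so the resampled block is again $(C_0\log n)$-localized except with probability $e^{-\Omega(1/\eps)}n^{-10}$, and a union bound over the polynomially many steps in the window of interest (upgrading $n^{-10}$ to $n^{-C}$) keeps us in $(C_0\log n)\loc$ throughout. The requirement $n\ge n_0=O(\log(1/\eps)/\eps)$ is exactly what \Cref{lem:stationary-highprob} needs for its geometric tail bounds.

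The hard part is the bookkeeping in the three-update step: against an adversarial start, and with every resampling drawn from $\mu(\cdot\mid\vec\ell\loc)$ rather than being an exact sort, one must pin down precisely which labels and positions are ``good'' after each intermediate update and verify that after the third update nothing is displaced by more than $O(\log n)$ — i.e. that the $C_0\log n$ slacks add up rather than compounding over the iterates. This is also the source of the $\eps$-dependence: $C_0=O(1/\eps)$ and the $e^{-\Omega(1/\eps)}$ prefactor are inherited from \Cref{lem:stationary-highprob}.
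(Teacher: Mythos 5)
Your plan is genuinely different from the paper's: the paper does not track individual block resamplings at all, but instead stochastically dominates the projected processes $(\eta_k(\sigma_t))_{k}$ by $k$-particle ASEP block dynamics (via \Cref{lem:comparison-to-ASEP-ell-restricted} and \Cref{obs:ways-to-violate-monotonicity}), invokes the $O(1)$ ASEP block-dynamics mixing time of \Cref{lem:ASEP-block-mixing-time} so that after $C_1\log n$ steps the ASEP is within $n^{-12}$ of stationarity, and then applies the ASEP stationary tail bound of \Cref{lem:ASEP-stationary-estimate}. Unfortunately the direct bookkeeping you propose has a genuine gap in exactly the place you flag as ``the hard part.''

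The conditional version of \Cref{lem:stationary-highprob} gives $(C_0\log n)\loc$ for $R_b\sigma$, i.e.\ a bound on displacement in \emph{local rank}. Translating back to actual labels uses the increasing map $r_b$, and this behaves like a translation only when the available label set $[n]\setminus b(B^c)$ is (close to) an interval — i.e.\ when $b$ is already $O(\log n)$-localized. That is precisely what fails after a single $B_W$ update from an adversarial start. Take $\sigma_0(B_W^c)=\{1,\dots,n/3\}$: after $B_W$, positions $[1,n/3-1]$ hold $\approx\{n/3+1,\dots,2n/3-1\}$, so the labels available to $B_E$ are $\approx[1,n/3]\cup[2n/3,n]$, a set with an $\Omega(n)$ gap, and the associated $r_{b'}$ has a jump of size $\approx n/3$ at rank $\approx n/3$. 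A local-rank displacement of just $2$ across that jump — which has probability $\Omega(e^{-O(\eps)})$, a constant, not $n^{-10}$, under the conditional stationary measure — moves a label from $\approx 2n/3$ down to $\approx n/3$. Concretely, with probability bounded away from $0$, after the $B_E$ resampling the position $\lceil 2n/3\rceil+1$ holds a label $\le n/3$; this position lies outside $B_W$, so the final $B_W$ update cannot repair it. Your claim that ``positions $[\lceil 2n/3\rceil+1,n]$ hold $[\lceil 2n/3\rceil+1,n]$ up to a $C_0\log n$ error'' after $B_W,B_E$ is therefore false with constant probability — the slacks do not merely add, they can blow up by $\Omega(n)$ at the boundary rank. (This is consistent with the ASEP picture: after only two block updates, the $n/3$-particle ASEP started entirely in $B_W^c$ has had time to pack into $B_E$ but not to relax to $[1,n/3]$.) To drive the failure probability to $n^{-10}$ you would need $\Theta(\log n)$ rounds and a much more careful accounting of where the ``gap'' in the available alphabet sits after each update; this is exactly what the paper's ASEP-domination route delivers for free, since the $O(1)$ ASEP block mixing time and TV-submultiplicativity give $n^{-\Omega(1)}$ after $O(\log n)$ steps without tracking any of this structure.
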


We prove these burn-in style results by stochastic comparisons of various particle counts to the Asymmetric Exclusion Process (ASEP), for which the mixing time and convergence behavior are well understood.

\subsection{The asymmetric exclusion process}

The $k$-particle asymmetric exclusion process (ASEP) is a Markov chain on the state space of assignments 
$\binom{[n]}{k}:=\set{Y\in \{0,1\}^n}{\sumo in Y(i)=k}$, where the value $1$ at a site means a particle is present, and a $0$ means the site is empty. 

We describe the process in discrete time. At each step, the ASEP chain with parameter $q\in (\frac{1}{2},1)$ picks an edge (pair of adjacent sites) $(i,i+1)$ uniformly at random, and exchanges the states on the sites with probabilities 
\begin{align*}
    P((0,1)\to (1,0)) = q \qquad \text{and} \qquad  P((1,0) \to (0,1))  = 1-q\,.
\end{align*}
(Notice that if both sides are occupied or both sides are empty, the interchange does nothing so the probability does not matter.) 
We denote the stationary distribution by $\nu_{[n]}^{(k)}$. 
For an extensive background on the asymmetric exclusion process, see Section III of~\cite{liggett1985interacting}. Throughout this section, we will take $q$ given by the relation 
\begin{align}\label{eq:q-eps-relation}
\frac{q}{1-q} = 1+\eps\,,
\end{align} 
for which the constant-$\eps$-positively-biased adjacent transposition shuffle is directly related to the $q$-ASEP.

\subsection{Coupling the biased adjacent transposition shuffle and ASEP}
In the below, we make some observations on  monotonicity for the ASEP process itself, and monotonicity between the projected $\Mnn$ (see \Cref{def:projection-to-compare-to-ASEP})
and a $k$-particle ASEP. We define a partial order on $\binom{[n]}{k}$ where $\eta \le \eta'$  (read: $\eta$ is to the left of $\eta'$) if for every $r\le n$, $\eta$ has more particles on $[r]$ than $\eta'$ does. Such monotonicity properties are standard in the literature (see e.g., Section~23 of~\cite{LP}); we include proofs for our specific setting for completeness. We begin with an observation about the potential ways such a monotonicity can be violated in one step. 

\begin{observation}\label{obs:ways-to-violate-monotonicity}
    Suppose $Y,Y'\in \binom{[n]}{k}$ with $Y \le Y'$. If they both (possibly) make a swap on an edge $e= (i,i+1)$ to generate $Y_1, Y_1'$, then $Y_1 \le Y'_1$ unless at least one of the following occurred: 
    \begin{enumerate}
        \item $(Y(i),Y(i+1))= (1,0)$ and $(Y'(i),Y'(i+1)) \in \{ (1,0),(0,1)\}$ and $(Y_1(i),Y_1(i+1))=(0,1)$ while $(Y_1'(i),Y_1'(i+1))=(1,0)$. 
        \item $(Y(i),Y(i+1)) \in  \{(0,1),(1,0)\}$ and $(Y'(i),Y'(i+1)) = (0,1)$ and $(Y_1(i),Y_1(i+1))=(0,1)$ while  $(Y_1'(i),Y_1'(i+1))=(1,0)$. 
    \end{enumerate}
\end{observation}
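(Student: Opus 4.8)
The plan is to do a direct case analysis on what happens on the edge $e = (i,i+1)$, tracking the prefix-counts that define the partial order. Recall that $Y \le Y'$ means that for every $r \le n$, the number of particles of $Y$ on $[r]$ is at least that of $Y'$; a swap on edge $(i,i+1)$ only changes the prefix-count at the single value $r = i$ (all other prefix-counts are determined by the total number of particles, which is preserved, and by the configuration away from $\{i,i+1\}$, which is unchanged). So the only way monotonicity can break is if, after the swap, $Y_1$ has \emph{fewer} particles on $[i]$ than $Y_1'$ does, i.e.\ $Y_1(i) = 0$ and $Y_1'(i) = 1$ while before the swap $Y(i) \ge Y'(i)$ held (with the difference of prefix-counts at $i$ being $\ge 0$).

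First I would note that a swap on $(i,i+1)$ can only move a particle from site $i$ to site $i+1$ or vice versa, so $Y_1(i) = 0$ forces either $Y(i) = 0$ already (no particle there before, none after), or $(Y(i),Y(i+1)) = (1,0)$ with the update $(1,0)\to(0,1)$ firing. Symmetrically, $Y_1'(i) = 1$ forces either $Y'(i) = 1$ already, or $(Y'(i),Y'(i+1)) = (0,1)$ with the update $(0,1)\to(1,0)$ firing. Combining these with the constraint that the prefix-count inequality $\#(Y\cap[i]) \ge \#(Y'\cap[i])$ held before the swap — equivalently $Y(i) + (\text{common prefix on }[i-1]) \ge Y'(i) + (\text{same})$, so $Y(i) \ge Y'(i)$ — rules out the combination $(Y(i) = 0, Y'(i) = 1)$. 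This leaves exactly two surviving possibilities: (a) $Y(i) = 1$ (so the $(1,0)\to(0,1)$ move fired at $Y$, giving $(Y(i),Y(i+1)) = (1,0)$) together with $Y_1'(i) = 1$, where $Y'(i) \in \{0,1\}$ and in the $Y'(i) = 0$ subcase the $(0,1)\to(1,0)$ move fired; and (b) $Y(i) = 0$ with the $(0,1)\to(1,0)$ move having fired at $Y'$ (so $(Y'(i),Y'(i+1)) = (0,1)$ before and $(1,0)$ after), where $Y(i) = 0$ means $(Y(i),Y(i+1)) \in \{(0,1),(0,0)\}$ — but $(0,0)$ is impossible since then $Y_1(i+1) = 0$ forces the particle at $Y'(i+1)$ to have a no particle below it in $Y$, contradicting... actually I should double-check this last point carefully: the subtlety is whether $(0,0)$ at $Y$ on this edge is compatible with $Y \le Y'$ and $Y'$ having $(0,1)$ there. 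Since prefix-counts at $i+1$ must also satisfy $\#(Y\cap[i+1]) \ge \#(Y'\cap[i+1])$, and on $[i+1]$ we'd have $Y$ contributing $0$ from $\{i,i+1\}$ versus $Y'$ contributing $1$, we'd need the common part on $[i-1]$ to already have a strict surplus for $Y$; that is possible, so actually $(0,0)$ for $Y$ is \emph{not} automatically excluded, and case~(b) should indeed be stated with $(Y(i),Y(i+1)) \in \{(0,1),(1,0)\}$ as written — wait, $(1,0)$ has $Y(i)=1$, contradicting $Y(i)=0$. I would need to reconcile this: re-reading the statement, case~2 lists $(Y(i),Y(i+1)) \in \{(0,1),(1,0)\}$, and in the $(1,0)$ subcase the $Y$ move does \emph{not} fire (probability $q$ of staying), so $Y_1(i) = 1$, which would mean no violation — unless a violation at prefix-count $i$ is being subsumed. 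I expect the cleanest route is to argue contrapositively: assume $Y_1 \not\le Y_1'$, extract that the unique bad index is $r = i$, and enumerate; the listed cases~1 and~2 are then seen to be exactly the configurations on $e$ consistent with ``$Y$'s prefix-count at $i$ drops to strictly below $Y'$'s.''

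The main obstacle I anticipate is exactly this bookkeeping at the boundary between the two stated cases — making sure the enumeration is genuinely exhaustive and that the somewhat asymmetric way cases~1 and~2 are phrased (case~1 keyed on the \emph{pre-swap} state being $(1,0)$ at $Y$, case~2 keyed on it being $(0,1)$ at $Y'$) together cover every scenario in which the $(1,0)\to(0,1)$ transition at $Y$ or the $(0,1)\to(1,0)$ transition at $Y'$ fires in a way that flips the prefix-count ordering at $i$. I would handle this by first reducing to ``the only possibly-violated prefix-count is at $r=i$,'' then splitting on $(Y_1(i),Y_1'(i))$: the only bad value is $(0,1)$, and from there back out the at-most-two pre-images on edge $e$ under the ASEP update rule, matching them to the two listed cases. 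Everything else ($r \ne i$, or $Y_1(i) \ne 0$, or $Y_1'(i) \ne 1$) immediately gives $Y_1 \le Y_1'$, so no real computation is involved beyond this finite check.
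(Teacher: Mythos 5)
Your high-level plan --- noting that a swap on $(i,i+1)$ can only alter the prefix-count at the single index $r=i$, and then arguing contrapositively by assuming $Y_1\not\le Y_1'$ and backing out the edge configuration --- is a legitimate alternative to the paper's approach. The paper runs a \emph{forward} enumeration: it lists the edge configurations not covered by items 1--2 and verifies directly, by prefix-count bookkeeping $N_i := \#(Y\cap[i])$, that the ordering is preserved in each. Your contrapositive route could in principle be cleaner, but the writeup as given has genuine gaps.

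First, the step ``$Y(i)+(\text{common prefix on }[i-1]) \ge Y'(i)+(\text{same})$, so $Y(i)\ge Y'(i)$'' is not justified as written: $Y\le Y'$ only gives $N_{i-1}\ge N_{i-1}'$, not equality, and the configurations on $[i-1]$ are certainly not the same. The inequality $Y(i)\ge Y'(i)$ \emph{does} hold, but only \emph{under the violation assumption}: a failure of $Y_1\le Y_1'$ must occur at $r=i$, which means $N_{i-1}+Y_1(i)<N_{i-1}'+Y_1'(i)$, and combined with $N_{i-1}\ge N_{i-1}'$ this forces $N_{i-1}=N_{i-1}'$ and $(Y_1(i),Y_1'(i))=(0,1)$; only then does $N_i\ge N_i'$ give $Y(i)\ge Y'(i)$. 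You need to establish $N_{i-1}=N_{i-1}'$ explicitly rather than treat the prefixes as ``the same'' by fiat.

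Second, you misread item~2. You write that in the $(Y(i),Y(i+1))=(1,0)$ subcase ``the $Y$ move does not fire\dots so $Y_1(i)=1$, which would mean no violation.'' But item~2 also stipulates $(Y_1(i),Y_1(i+1))=(0,1)$, so the move \emph{did} fire. The $(1,0)$ option in item~2 is precisely the overlap with item~1 (both describe $Y=(1,0)$, $Y'=(0,1)$, both executing their swaps); once you notice the two items overlap on that configuration, the apparent inconsistency you ran into disappears.

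Third, the proposal never actually completes: you end by saying you ``expect the cleanest route is to argue contrapositively'' and sketch the steps without carrying out the enumeration. To finish along your own lines: under the violation assumption you have $N_{i-1}=N_{i-1}'$, $(Y_1(i),Y_1'(i))=(0,1)$, and from $N_{i+1}\ge N_{i+1}'$ also $Y(i)+Y(i+1)\ge Y'(i)+Y'(i+1)$; since the edge sums are preserved, $Y_1(i+1)=Y(i)+Y(i+1)$ and $Y_1'(i+1)=Y'(i)+Y'(i+1)-1$, and the constraints $Y_1(i+1),Y_1'(i+1)\in\{0,1\}$ together with $Y(i)\ge Y'(i)$ force $(Y,Y')$ on $e$ into exactly $\{((1,0),(1,0)),((1,0),(0,1)),((0,1),(0,1))\}$ with $(Y_1,Y_1')=((0,1),(1,0))$, which are precisely items 1 and 2. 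That finite check is the missing core of the argument.
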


\begin{proof}
We consider the immediate cases first. If both $Y$ and $Y'$ are $(0,0)$ or $(1,1)$ or don't execute the swap at $(i,i+1)$ (regardless of their configuration), then evidently the ordering cannot change. Likewise, if they both are $(1,0)$ and both swap, or both are $(0,1)$ and both swap, the monotonicity cannot change. Finally, if $Y$ is making a $(0,1) \to (1,0)$ swap, or $Y'$ is making a $(1,0)\to (0,1)$ swap, then the ordering is only improving. 

    We now consider the remaining cases that are not in items 1--2 in the observation:
    \begin{enumerate}
        \item $Y$ is making a $(1,0) \to (0,1)$ swap, while $Y'$ is $(0,0)$;
        \item $Y$ is making a $(1,0)\to (0,1)$ swap while $Y'$ is $(1,1)$;
        \item $Y'$ is making a $(0,1)\to (1,0)$ swap while $Y$ is $(0,0)$;
        \item $Y'$ is making a $(0,1)\to (1,0)$ swap while $Y$ is $(1,1)$.
    \end{enumerate}
    For these cases, let $N_i = |\{j\le i: Y(j) = 1\}|$ and define $N_i'$ analagously. The inequality $Y \le Y'$ is equivalent to $N_i \ge N_i'$ for all $i\in [n]$. 
    
    In the first case, since $Y \le Y'$, one has  $N_{i-1} \ge N_{i-1}'$ while $N_{i} = N_{i-1}+1$ and $N_i'= N_{i-1}'$, so $N_i \ge N_{i}'+1$ and so decrementing $N_i$ while incrementing $N_{i+1}$ and leaving $N_i',N_{i+1}'$ unchanged retains the ordering. 

    In the second case, since $Y \le Y'$, one has $N_{i+1}\ge N_{i+1}'$, while $N_{i+1} = N_i$ and $N_{i+1}' = N_{i}'+1$, so  $N_{i} \ge N_{i}'+1$ and decrementing $N_i$ while incrementing $N_{i+1}$ and leaving $N_i',N_{i+1}'$ unchanged retains the ordering. 

    In the third case, since $Y\le Y'$, one has $N_{i+1}\ge N_{i+1}'$ but $N_{i+1} = N_i$ while $N_{i+1}' = N_{i}'+1$, so $N_{i} \ge N_{i}'+1$; thus incrementing $N_{i}'$ while decrementing $N_{i+1}'$ retains the ordering.  

    In the fourth case, since $Y\le Y'$, one has $N_{i-1}\ge N_{i-1}'$ but $N_{i} = N_{i-1}+1$ while $N_{i}' = N_{i-1}'$, so $N_{i} \ge N_{i}'+1$; thus incrementing $N_{i}'$ while decrementing $N_{i+1}'$ retains the ordering.  
\end{proof}

The following lemma states that the ASEP process is monotone in this partial order.

\begin{lemma}\label{lem:ASEP-monotonicity}
    If $Y_t$ and $\tilde Y_t$ are $k$-particle ASEPs with $Y_0 \le \tilde Y_0$, then there exists a coupling such that for all $t\ge 0$, one has $Y_t \le \tilde Y_t$. 
\end{lemma}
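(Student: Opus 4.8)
The plan is to build the standard monotone grand coupling of the two ASEP chains and then invoke \Cref{obs:ways-to-violate-monotonicity} to rule out any step at which the ordering could be lost. I would run $(Y_t)$ and $(\tilde Y_t)$ together as follows: at each step select the \emph{same} edge $e=(i,i+1)$ uniformly at random, and draw a \emph{single} uniform random variable $U\sim\mathrm{Unif}[0,1]$ shared between the two chains. If a chain has local state $(0,1)$ on $e$, it swaps to $(1,0)$ if and only if $U\le q$; if it has local state $(1,0)$ on $e$, it swaps to $(0,1)$ if and only if $U>q$; in local states $(0,0)$ and $(1,1)$ the step does nothing. Since $\Pr(U\le q)=q$ and $\Pr(U>q)=1-q$ and $e$ is uniform, each of $(Y_t)$ and $(\tilde Y_t)$ is, marginally, a faithful copy of the $k$-particle $q$-ASEP, so this is a genuine coupling.

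Next I would prove by induction on $t$ that under this coupling $Y_t\le\tilde Y_t$ for all $t$. The base case $t=0$ is the hypothesis. For the inductive step, assume $Y_t\le\tilde Y_t$ and apply \Cref{obs:ways-to-violate-monotonicity} with $Y=Y_t$, $Y'=\tilde Y_t$, and the common edge $e$: the ordering $Y_{t+1}\le\tilde Y_{t+1}$ can fail only if one of the two enumerated bad events occurs, and the shared threshold at $q$ forbids both. In the first bad event, $Y_t$ has local state $(1,0)$ and executes a swap, which under our rule forces $U>q$; simultaneously $\tilde Y_t$ ends in local state $(1,0)$, arriving there either from $(1,0)$ without swapping or from $(0,1)$ by swapping, and either alternative forces $U\le q$ --- a contradiction. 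In the second bad event, $Y_t$ ends in local state $(0,1)$, arriving there either from $(0,1)$ without swapping or from $(1,0)$ by swapping, and either alternative forces $U>q$; simultaneously $\tilde Y_t$ has local state $(0,1)$ and swaps to $(1,0)$, which forces $U\le q$ --- again a contradiction. Hence neither bad event occurs, $Y_{t+1}\le\tilde Y_{t+1}$, and the induction closes.

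There is essentially no obstacle here: this is the classical monotone coupling for exclusion-type dynamics. The only point requiring care is the orientation of the two thresholds --- ``move a particle left'' is enabled precisely when $U$ is small and ``move a particle right'' precisely when $U$ is large, so on any single edge firing the two chains can never attempt opposing moves. That opposing-move configuration is exactly what \Cref{obs:ways-to-violate-monotonicity} isolates as dangerous, so ruling it out is all that is needed. I would also remark that the same coupling, applied between a $k$-particle projection of $\Mnn$ and a $k$-particle ASEP with $q$ as in~\eqref{eq:q-eps-relation}, is what drives the burn-in comparisons used later in \Cref{lem:single-site-burn-in}.
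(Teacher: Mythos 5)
Your proposal is correct and is essentially the same argument as the paper's: you construct the same monotone coupling (common edge, shared uniform $U$, threshold at $q$ with the same orientation) and then invoke \Cref{obs:ways-to-violate-monotonicity} to reduce to the two enumerated bad events. The only cosmetic difference is that the paper dispatches both bad events at once by noting that whenever both chains have local state in $\{(0,1),(1,0)\}$ on the chosen edge they are forced to agree there after the update, whereas you derive a contradiction on $U$ in each bad event separately; these are the same observation.
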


\begin{proof}
    Consider the following one-step coupling. If $Y_s,Y'_s$ have $k$ particles each and $Y_s \le Y'_s$, then to generate $Y_{s+1}, Y_{s+1}'$  select the same edge $e= (i,i+1)$ to update, and use a single uniform random variable $U\sim \text{Unif}[0,1]$ to generate $(Y_{s+1},Y_{s+1}')$ as follows: if either one's state is $(1,0)$ or $(0,1)$, set it equal to $(1,0)$ if and only if $U\le q$. 

    By Observation~\ref{obs:ways-to-violate-monotonicity}, we only need to consider the possibilities listed in items 1--2 therein as ways that monotonicity could be violated by this update. But the definition of the coupling ensures that if both $(Y_s(i),Y_{s}(i+1))$ and $(Y_s'(i),Y_s'(i+1))$ are in $\{(0,1),(1,0)\}$ then at time $s+1$ they agree on locations $i,i+1$ and so neither case occurs. 
\end{proof}

The next lemma says that projections of an $\eps$-positively biased adjacent transposition shuffle are stochastically to the left of the $q$-ASEP with $q/(1-q) = 1+\eps$. To properly define this stochastic comparison, we define the following projections which track the locations of the first $k$ particles.

\begin{definition}\label{def:projection-to-compare-to-ASEP}
    For each $0< k< n$, 
    define the function $\eta_k:S_n\to \binom{[n]}{k}$ which takes a configuration $\sigma \in S_n$ and outputs $\eta_k(i) = \mathbf 1\{\sigma(i)\le k\}$.
\end{definition}

\begin{lemma}\label{lem:domination-by-ASEP}
    If $\vec{p}$ is $\eps$-positively biased, then the chain $\sigma_t \sim \Mnn$ on $S_n$ is such that for each $k$, the process $\eta_k(\sigma_t)$ is stochastically to the left of the $q$-ASEP process with parameter $q/(1-q) = 1+\eps$, initialized from $\eta_k(\sigma_0)$. 
\end{lemma}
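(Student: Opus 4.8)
The plan is to exhibit an explicit monotone coupling of $(\sigma_t)\sim\Mnn$ with a $q$-ASEP $(Y_t)$, where $q$ satisfies \eqref{eq:q-eps-relation}, under which $\eta_k(\sigma_t)\le Y_t$ is preserved for all $t$; taking $Y_0=\eta_k(\sigma_0)$ then yields the asserted stochastic domination. The key structural remark is that although $(\eta_k(\sigma_t))_t$ is not itself Markov, each of its one-step updates is an ASEP-type move: if edge $(i,i+1)$ is selected at time $t$ and $\sigma_t(i)=a$, $\sigma_t(i+1)=b$, then $\eta_k(\sigma_{t+1})$ is obtained from $\eta_k(\sigma_t)$ by interchanging the values on sites $i,i+1$ exactly when $\sigma$ executes the transposition and precisely one of $a,b$ lies in $[k]$, and is left unchanged otherwise (either $\sigma$ does not swap, or $a,b$ are both in $[k]$ or both outside). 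Consequently \Cref{obs:ways-to-violate-monotonicity}, which only concerns how prefix particle-counts evolve under a possible swap on a single edge, applies verbatim with the roles $Y=\eta_k(\sigma_t)$ and $Y'=Y_t$.

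The coupling runs both chains on the same uniformly random edge $(i,i+1)$ and uses a single $U\sim\text{Unif}[0,1]$ per step, in the style of the proof of \Cref{lem:ASEP-monotonicity}. For $Y_t$: if $(Y_t(i),Y_t(i+1))\in\{(0,1),(1,0)\}$, set it to $(1,0)$ if $U\le q$ and to $(0,1)$ otherwise (which is exactly the ASEP rule). For $\sigma_t$, write $a=\sigma_t(i)$, $b=\sigma_t(i+1)$; since $\eps$-positive bias together with $q/(1-q)=1+\eps$ (so $q=\tfrac{1+\eps}{2+\eps}$) gives $p_{b,a}\le 1-q$ when $a<b$ and $p_{b,a}\ge q$ when $a>b$, execute the transposition $\sigma_{t+1}=\sigma_t\circ(i,i+1)$ iff $U>1-p_{b,a}$ in the first case and iff $U\le p_{b,a}$ in the second. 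Each marginal is then correct: $\sigma$ swaps with probability $p_{b,a}$, and $Y$ follows the $q$-ASEP transitions.

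It remains to check that this coupling never enters either failure mode of \Cref{obs:ways-to-violate-monotonicity} (with $Y=\eta_k(\sigma_t)\le Y'=Y_t$), so that $\eta_k(\sigma_t)\le Y_t$ propagates by induction. This is a brief case analysis driven by the inequalities $1-p_{b,a}\ge q$ (valid when $a<b$) and $p_{b,a}\ge q$ (valid when $a>b$): failure mode~1 would require $\eta_k(\sigma_t)=(1,0)$ on the edge, forcing $a<b$, together with $\sigma$ executing the transposition, forcing $U>1-p_{b,a}\ge q$, which makes $Y$ move to $(0,1)$ rather than $(1,0)$ and contradicts that mode; failure mode~2 would require $Y_t=(0,1)$ on the edge moving to $(1,0)$, forcing $U\le q$, while the requirement that $\eta_k(\sigma_t)$ ends at $(0,1)$ forces $U>q$ in each of its two sub-cases (if $\eta_k(\sigma_t)=(0,1)$ then $\sigma$ must not swap, and $p_{b,a}\ge q$; if $\eta_k(\sigma_t)=(1,0)$ then $\sigma$ must swap, and $1-p_{b,a}\ge q$), again a contradiction. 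Thus $\eta_k(\sigma_t)\le Y_t$ for all $t$.

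I expect the main obstacle to be conceptual rather than computational: one must avoid trying to couple $(\eta_k(\sigma_t))_t$ directly to the ASEP (it is not Markov) and instead carry the comparison at the level of the full permutation-valued chain, using the sign of $\sigma_t(i)-\sigma_t(i+1)$ to decide — via $\eps$-positive bias — whether $p_{b,a}$ lies below $1-q$ or above $q$. Once the coupling is organized this way, everything reduces to the short verification above and \Cref{obs:ways-to-violate-monotonicity}.
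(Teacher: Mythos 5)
Your proposal is correct and takes essentially the same approach as the paper: same coupling (common random edge, common $U\sim\text{Unif}[0,1]$, threshold rules for both chains, giving the same marginals), and the same invocation of Observation~\ref{obs:ways-to-violate-monotonicity} to propagate $\eta_k(\sigma_t)\le Y_t$ one step at a time. The only cosmetic differences are that you couple to a single $k$-particle ASEP at a time rather than all $n-1$ simultaneously, and you check the two failure modes of the observation directly whereas the paper records the two implications and then invokes the observation; these are presentational, not substantive.
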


\begin{proof}
    We couple $(\sigma_t)_{t\ge 0}$ to $n-1$ different ASEP process $Y_t^{(1)},...,Y_t^{(n)}$ where $Y_t^{(k)}$ is a $k$-particle ASEP initialized from $Y_0^{(k)} = \eta_k(\sigma_0)$. The coupling operates by choosing the same edge to update at each time step, and using the same uniform random variable $U_{t+1}\sim \text{Unif}[0,1]$ to generate the update. The latter statement means that we update as follows: suppose $\sigma_t(i)<\sigma_{t}(i+1)$ and $(Y_{t+1}^{(k)}(i), Y_{t+1}^{(k)}(i+1)) \in \{(0,1),(1,0)\}$; then 
    \begin{align*}
        (\sigma_{t+1}(i),\sigma_{t+1}(i+1)) & = (\sigma_t(i),\sigma_t(i+1)) &&\text{if and only if $U_{t+1}\le p_{\sigma_t(i),\sigma_t(i+1)}$}\,, \\ 
        (Y_{t+1}^{(k)}(i), Y_{t+1}^{(k)}(i+1)) & = (1,0) &&\text{if and only if  $U_{t+1}\le q$}\,.
    \end{align*}
    If $\sigma_t(i)>\sigma_{t}(i+1)$ instead, then set 
    \[(\sigma_{t+1}(i),\sigma_{t+1}(i+1)) = (\sigma_{t}(i+1),\sigma_t(i)) \qquad \text{if and only if }
    U_{t+1}\le p_{\sigma_t(i+1),\sigma_t(i)}.\] 

    In this manner, one has the property that if $(\eta^{(k)}(\sigma_{t})(i), \eta^{(k)}(\sigma_t)(i+1))$ and $(Y^{(k)}_t(i),Y_t^{(k)}(i+1))$ are both in $\{(0,1),(1,0)\}$, then 
    \begin{itemize}
        \item if the former is $(1,0)$, since $p_{\sigma_t(i),\sigma_t(i+1)} \ge q$ by $\eps$-positive bias, 
        \begin{align*}
        (Y^{(k)}_{t+1}(i),Y_{t+1}^{(k)}(i+1))= (1,0) \implies (\eta^{(k)}(\sigma_{t+1})(i), \eta^{(k)}(\sigma_{t+1})(i+1))= (1,0)\,;
    \end{align*}
    \item if the former is $(0,1)$, since $p_{\sigma_t(i+1),\sigma_t(i)}\ge q$ by $\eps$-positive bias, 
            \begin{align*}
        (\eta^{(k)}(\sigma_{t+1})(i), \eta^{(k)}(\sigma_{t+1})(i+1))= (0,1) \implies (Y^{(k)}_{t+1}(i),Y_{t+1}^{(k)}(i+1))= (0,1)\,. 
    \end{align*}
    \end{itemize}
        By Observation~\ref{obs:ways-to-violate-monotonicity} and the fact that the monotonicity held at time $t$, the above two items imply it also holds at time $t+1$. 
\end{proof}

Taking $t\to\infty$ in the above lemma, we also deduce the domination relation for the stationary distributions of the first $k$ coordinates in the shuffle and $k$-particles ASEP. 

\begin{corollary}\label{cor:stationary-domination-by-ASEP}
    Suppose $\vec{p}$ is $\eps$-positively biased. If $\sigma\sim \mu$, then for every $k\le n$, one has $\eta_k(\sigma)\preceq Y^{(k)}_\infty$ where $Y^{(k)}_\infty $ is drawn from the $q$-ASEP stationary distribution with $k$ particles. 
\end{corollary}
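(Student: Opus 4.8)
The plan is to obtain the corollary by passing to the limit $t\to\infty$ in \Cref{lem:domination-by-ASEP}. First I would start the adjacent transposition shuffle from stationarity, taking $\sigma_0\sim\mu$, so that $\sigma_t\sim\mu$ for every $t\ge 0$ and hence the law of $\eta_k(\sigma_t)$ equals the law of $\eta_k(\sigma)$ for all $t$. Applying \Cref{lem:domination-by-ASEP}, there is a coupling of $(\sigma_t)_{t\ge0}$ with a $k$-particle $q$-ASEP $(Y_t^{(k)})_{t\ge0}$, initialized at $Y_0^{(k)}=\eta_k(\sigma_0)$ and with $q/(1-q)=1+\eps$, under which $\eta_k(\sigma_t)\preceq Y_t^{(k)}$ for all $t$.

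Next I would invoke ergodicity of the exclusion process: on the finite state space $\binom{[n]}{k}$ the $q$-ASEP is irreducible and has positive holding probability at each step, hence is aperiodic, so $Y_t^{(k)}$ converges in distribution to $Y_\infty^{(k)}\sim\nu_{[n]}^{(k)}$ as $t\to\infty$. Since stochastic domination on a finite poset is equivalent to the statement that $\Prob{X\in U}\le\Prob{Y\in U}$ for every up-set (increasing event) $U$, it suffices to fix such a $U$ and note that for every $t$ the coupling gives $\Prob{\eta_k(\sigma)\in U}=\Prob{\eta_k(\sigma_t)\in U}\le\Prob{Y_t^{(k)}\in U}$; letting $t\to\infty$, the right-hand side tends to $\Prob{Y_\infty^{(k)}\in U}$, so $\Prob{\eta_k(\sigma)\in U}\le\Prob{Y_\infty^{(k)}\in U}$. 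As $U$ ranges over all up-sets, this is precisely $\eta_k(\sigma)\preceq Y_\infty^{(k)}$. Equivalently, one could take a weak subsequential limit of the joint laws of $(\eta_k(\sigma_t),Y_t^{(k)})$ on the finite set $\binom{[n]}{k}\times\binom{[n]}{k}$; this produces an explicit coupling of $\mu\circ\eta_k^{-1}$ with $\nu_{[n]}^{(k)}$, and since $\{x\preceq y\}$ is a closed (indeed finite) set this limiting coupling is still supported on it.

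There is no genuine obstacle in this argument; it is a routine limiting statement. The only two points worth stating carefully are that the shuffle is truly started at stationarity (so that the left marginal is constant in $t$), and that the finite-state ASEP is ergodic (so that the right marginal has a limit equal to $\nu_{[n]}^{(k)}$) — both of which are immediate from the definitions.
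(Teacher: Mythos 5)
Your proposal is correct and takes exactly the route the paper intends — the paper's "proof" is simply the remark "taking $t\to\infty$ in the above lemma," and you have spelled out that limit carefully (initialize the shuffle from $\mu$ so its marginal is constant, invoke ergodicity of the finite-state ASEP, and pass to the limit through up-sets or via a subsequential limit of the coupled joint laws). The one small thing you left implicit — that $\eta_k(\sigma_0)$ is random, so one should condition on $\sigma_0$ before applying Lemma~\ref{lem:domination-by-ASEP} and then average — is harmless because the ASEP's convergence to $\nu^{(k)}_{[n]}$ is uniform over initializations on the finite state space, so bounded convergence closes the gap.
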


The same inequalities hold for the restricted dynamics where we restrict to $\vec{\ell}\loc$ for a vector $\vec{\ell}$ in $\mathcal L_n$. Indeed, that will be a direct consequence of the ASEPs own monotonicity.

\begin{lemma}\label{lem:comparison-to-ASEP-ell-restricted}
     Suppose $\vec{\ell}\loc$ for $\vec{\ell} \in \mathcal L_n$. If $\sigma_t$ is the adjacent transposition chain restricted to $\vec{\ell}\loc$, for any $k$ the processes $\eta_k(\sigma_t)$ are stochastically to the left of the $k$-particle ASEP $Y^{(k)}_t$ started from $\eta_k(\sigma_0)$. 
     It also holds that $\eta_k(\si) \preceq Y^{(k)}$ where $\si$ is drawn from the restricted stationary distribution and $Y^{(k)}$ is drawn from the stationary distribution of $q$-ASEP.
     \end{lemma}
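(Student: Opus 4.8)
The plan is to reuse, essentially verbatim, the coupling from the proof of \Cref{lem:domination-by-ASEP}, the only change being that $(\sigma_t)$ now denotes the adjacent transposition chain \emph{restricted} to $\vloc$. Fix $k$ and couple $(\sigma_t)$ to a single $k$-particle $q$-ASEP $(Y^{(k)}_t)$ started from $Y^{(k)}_0=\eta_k(\sigma_0)$ by choosing the same update edge $(i,i+1)$ at each step and driving both updates with a shared $U_{t+1}\sim\mathrm{Unif}[0,1]$, exactly as in \Cref{lem:domination-by-ASEP}; when a proposed swap of $\sigma_t$ on the chosen edge would exit $\vloc$, the restricted chain rejects it and sets $\sigma_{t+1}=\sigma_t$. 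As in \Cref{lem:domination-by-ASEP}, the objective is to maintain $\eta_k(\sigma_t)\preceq Y^{(k)}_t$ for all $t$, and by \Cref{obs:ways-to-violate-monotonicity} it suffices to rule out the two forbidden one-step transitions listed there, which only arise when both of the pairs $(\eta_k(\sigma_t)(i),\eta_k(\sigma_t)(i+1))$ and $(Y^{(k)}_t(i),Y^{(k)}_t(i+1))$ lie in $\{(0,1),(1,0)\}$.

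The one new ingredient is \Cref{cl:ooo}: swapping two particles that are out of order in a configuration of $\vloc$ keeps it in $\vloc$. Hence the restricted chain can reject a swap only on an edge $(i,i+1)$ with $\sigma_t(i)<\sigma_t(i+1)$. I would then run the edge-by-edge case analysis of \Cref{lem:domination-by-ASEP}. If $\eta_k(\sigma_t)$ has the pair $(0,1)$ on the chosen edge, then $\sigma_t(i)>k\ge\sigma_t(i+1)$, so the two particles are out of order, no rejection is possible, the restricted chain performs the swap under the identical rule as the unrestricted one, and the argument of \Cref{lem:domination-by-ASEP} applies verbatim (using $p_{\sigma_t(i+1),\sigma_t(i)}\ge q$ from $\eps$-positive bias): $\eta_k(\sigma_{t+1})$ has pair $(0,1)$ only if $Y^{(k)}_{t+1}$ does. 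If instead $\eta_k(\sigma_t)$ has the pair $(1,0)$, then $\sigma_t(i)\le k<\sigma_t(i+1)$, this is an in-order swap and may be rejected; but with the shared-uniform convention the restricted chain performs it only when $U_{t+1}>p_{\sigma_t(i),\sigma_t(i+1)}\ge q$, which in particular forces $Y^{(k)}$ to end the step with its particle at site $i+1$ rather than $i$. So whenever $Y^{(k)}_{t+1}$ has pair $(1,0)$ the restricted chain did not swap and $\eta_k(\sigma_{t+1})$ has pair $(1,0)$ as well, and a rejection only reinforces this. These are precisely the two implications used in \Cref{lem:domination-by-ASEP}, so by \Cref{obs:ways-to-violate-monotonicity} neither forbidden transition occurs and $\eta_k(\sigma_t)\preceq Y^{(k)}_t$ is preserved for all $t$, which proves the dynamic statement.

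For the stationary assertion I would let $t\to\infty$: by \Cref{cl:ooo} the restricted chain is ergodic on $\vloc$ and converges to its stationary law, $Y^{(k)}_t$ converges to $\nu_{[n]}^{(k)}$, and since $\eta_k(\sigma_t)\preceq Y^{(k)}_t$ holds for every $t$ under the coupling, the domination passes to the limit and gives $\eta_k(\sigma)\preceq Y^{(k)}$ with $\sigma$ from the restricted stationary distribution and $Y^{(k)}\sim\nu_{[n]}^{(k)}$. The only genuinely substantive point — the step I would flag as the crux — is the use of \Cref{cl:ooo} to see that the $\vloc$-restriction can only block moves that push a small-index particle to the right and never moves that push it left, so that the projected restricted chain is pushed left at least as hard as the ASEP on every edge; everything else is the bookkeeping already carried out in \Cref{lem:domination-by-ASEP} and \Cref{obs:ways-to-violate-monotonicity}.
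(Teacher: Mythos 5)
Your proof is correct and takes essentially the same approach as the paper: reuse the coupling from Lemma~\ref{lem:domination-by-ASEP}, invoke Claim~\ref{cl:ooo} to see that the $\vloc$-restriction can only reject updates on in-order edges (where $\eta_k(\sigma_t)$ is $(0,0)$, $(1,1)$, or $(1,0)$, never $(0,1)$), so a rejection only holds the projection further left and never produces a forbidden transition of Observation~\ref{obs:ways-to-violate-monotonicity}. The paper's proof is terser and closes by appealing to Lemma~\ref{lem:ASEP-monotonicity}, while you run the explicit edge-by-edge check against Observation~\ref{obs:ways-to-violate-monotonicity}; these are equivalent ways of finishing the argument.
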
 

\begin{proof}
Consider the same coupling as in the proof of \Cref{lem:domination-by-ASEP}, except that rejections override the coupling.
    By Claim~\ref{cl:ooo}, the only moves that get rejected are ones where $\sigma_t(v) = i$ and $\sigma_t(v+1) =j$ for $i<j$. In the coupling to the $k$-particle ASEP, 
    $(Y_t^{(k)}(v), Y_t^{(k)}(v+1))$ has to be either $(0,0)$, $(1,1)$, or $(1,0)$. 
        This means that a move $(0,1)\to (1,0)$ is never rejected. By the monotonicity of the ASEP process per Lemma~\ref{lem:ASEP-monotonicity}, this implies that the restricted dynamics $\eta_k(\sigma_t)$ stays stochastically below the ASEP $Y^{(k)}_t$.  
\end{proof}

\subsection{Analysis of the ASEP} 

The domination of the biased adjacent transposition by $n$ $q$-ASEP processes implies that if the coupled ASEP processes have mixed, their stationary distributions can be used to control the locations of the particles in the shuffle. The paper~\cite{benjamini2005mixing} established that the discrete-time mixing time of $q$-ASEP is $O(n^2)$, and~\cite{LabbeLacoinCutoffASEP} proved cutoff. 

\begin{lemma}[\cite{benjamini2005mixing}, with \cite{LabbeLacoinCutoffASEP} for the $\eps$ dependencies]
\label{lem:ASEP-mixing-time}
    The mixing time of the     $k$-particle $(\frac{1}{2}+\ep')$-ASEP process is at most $\fc{n^2}{\ep'} (1+o(1))$ for every $k\le n$, asymptotically as $n\to \iy$, and its inverse spectral gap is $O\pf{n}{\ep'}$ for all $n$.     \footnote{Note this $\ep'$ is such that $\fc{\rc 2+\ep'}{\rc 2-\ep'} = 1+\ep$. When $\ep,\ep'$ are small, they differ by constant factors.}
\end{lemma}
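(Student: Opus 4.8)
The plan is to assemble both bounds from the cited works, the only genuine work being to reconcile normalizations and to track the dependence on the bias parameter. First I would record the dictionary between the discrete-time chain in the statement and the conventional $q$-ASEP on a segment: our chain picks one of the $n-1$ edges uniformly and then attempts the biased swap, so it is a deterministic time change by the factor $n-1$ of the continuous-time $q$-ASEP in which each edge updates at rate one; moreover the bias is reparametrized from $q$ (equivalently $\ep$ via \eqref{eq:q-eps-relation}) to $\ep' = q - \tfrac12$, which by the footnote relation $\fc{\rc 2 + \ep'}{\rc 2 - \ep'} = 1+\ep$ differs from $\ep$ by a bounded factor for small $\ep$.

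For the mixing-time bound I would invoke the cutoff theorem of Labbé and Lacoin \cite{LabbeLacoinCutoffASEP} for the $k$-particle ASEP (and for the biased card shuffle). Their result identifies, for each $k$, a cutoff location which in the continuous-time per-edge normalization is of the form $c_{k,n}/\ep'$ with $c_{k,n}$ of order $n$; applying the time change of the previous paragraph turns this into a cutoff at $\sim n\,c_{k,n}/\ep'$ steps, which is $O(n^2/\ep')$. Taking the supremum over $k\in\{1,\dots,n-1\}$ gives the claimed $\Tmix \le \fc{n^2}{\ep'}(1+o(1))$; the worst case over $k$ occurs for $k$ proportional to $n$, where the density profile must traverse a macroscopic distance at drift speed of order $\ep'$, and for $k$ near $0$ or $n$ the mixing time is strictly smaller. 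A convenient way to see that the constant $1$ in $\fc{n^2}{\ep'}$ is safely not tight is to note that the biased card shuffle on $n$ cards dominates every $k$-particle projection and, by the same reference, mixes in $(1+o(1))\,n^2/(2\ep')$ steps in our normalization, so $\max_k\Tmix$ of the $k$-ASEP is at most $(1+o(1))\,n^2/(2\ep') \le \fc{n^2}{\ep'}(1+o(1))$; this slack means the reconciliation of constants need not be done to high precision.

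For the spectral-gap bound I would instead go back to Benjamini, Berger, Hoffman and Mossel \cite{benjamini2005mixing}, who prove that the constant-bias card shuffle (equivalently, each $k$-particle ASEP) has relaxation time of order $n$ in discrete time; re-examining their argument — which controls the gap via a monotone coupling together with a Wilson-type test function of the form $\sum_i z^i\,\mathbf 1\{\text{there is a particle at site }i\}$ with $z$ chosen in terms of the bias — yields the dependence $\Trelax = O(n/\ep')$ uniformly in $n$ (for fixed $\ep'$), and uniformly in $k$. I would emphasize that this step cannot be shortcut through the mixing-time bound, since $\Trelax \le \Tmix$ would only give $O(n^2/\ep')$, off by a factor of $n$; the direct argument of \cite{benjamini2005mixing} is what produces the correct scaling.

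The main obstacle is purely bookkeeping: extracting the explicit $\ep'$-dependence (as opposed to constant-bias statements) from \cite{benjamini2005mixing,LabbeLacoinCutoffASEP}, and confirming that the supremum over $k$ of the $k$-particle cutoff locations is indeed at most $\fc{n^2}{\ep'}(1+o(1))$ after the time change — in particular that taking the worst $k$ does not cost more than the stated constant. Neither point is conceptually deep, but both require care with the several competing conventions (continuous versus discrete time, per-edge versus per-step, $\ep$ versus $\ep'$, and the precise definition of ``bias'' in the cited papers).
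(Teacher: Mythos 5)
Your proposal matches the paper's approach: the paper does not prove \Cref{lem:ASEP-mixing-time} but simply cites \cite{benjamini2005mixing} and \cite{LabbeLacoinCutoffASEP}, and you correctly identify the content to be extracted from each (the discrete/continuous time change, the $k$-particle projection/domination relationship, and the fact that the $O(n/\ep')$ relaxation time needs a direct argument rather than being deduced from the mixing time). The only minor quibble is that your asserted cutoff constant $n^2/(2\ep')$ for the full card shuffle is almost certainly not smaller than the worst-$k$ ASEP cutoff (the shuffle has to order all levels, so it mixes at least as slowly as its slowest projection; the worst $k\sim n/2$ projection already sits at $(1+o(1))n^2/\ep'$ in the discrete-time normalization), but as you note the lemma's claimed bound absorbs that constant either way.
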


Given the stochastic comparisons of Lemma~\ref{lem:comparison-to-ASEP-ell-restricted} and the mixing time bound of Lemma~\ref{lem:ASEP-mixing-time}, we can use the stationary distribution of the ASEP to bound the behavior of the biased adjacent transposition shuffle using stationary estimates of the ASEP for certain monotone events. The stationary distribution of the $q$-ASEP is carefully characterized in~\cite{liggett1985interacting};  we need the following tail bound on the right-most particle.   

\begin{lemma}\label{lem:ASEP-stationary-estimate}
        Let $Y^{(k)}$ be drawn from the $k$-particle $q$-ASEP stationary distribution with $q> \rc 2$ as in~\eqref{eq:q-eps-relation} ($\fc{q}{1-q}=1+\ep$). Then for every $k\le n$, for $r \ge C_q$,        \begin{align*}
            \nu^{(k)}_{[n]}( \max \{v: Y^{(k)}(v) =1\} \ge k + r) 
            \le             e^{-\ep' r/4}
        \end{align*}
                                when $r\ge \fc{4}{\ep'}\log \pf{2}{\ep'}$.
        \end{lemma}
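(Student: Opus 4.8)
The plan is to use the explicit product form of the $q$-ASEP stationary measure to express the location of the rightmost particle as a sum of independent geometric random variables, and then to invoke the concentration estimate of Lemma~\ref{lem:geometric-sum-tail}, in exactly the manner of the proof of Lemma~\ref{lem:stationary-highprob}. First I would record the classical fact (see~\cite{liggett1985interacting}; it is a one-line detailed-balance check using $P((0,1)\to(1,0))=q$, $P((1,0)\to(0,1))=1-q$) that $\nu^{(k)}_{[n]}$ is reversible with
\[
\nu^{(k)}_{[n]}(Y)=\frac{1}{Z}\prod_{v=1}^n\lambda^{\,v\,Y(v)},\qquad \lambda:=\frac{1-q}{q}=\frac1{1+\eps}\in(0,1).
\]
Writing the particle positions as $1\le p_1<\cdots<p_k\le n$ and setting $g_i:=p_i-i$, we have $0\le g_1\le\cdots\le g_k\le n-k$ and $\sum_v vY(v)=\sum_i p_i=\tfrac{k(k+1)}2+\sum_i g_i$; passing to the increments $d_i:=g_i-g_{i-1}\ge0$ (with $g_0:=0$) and using $\sum_i g_i=\sum_i(k-i+1)d_i$, the law of $(d_1,\dots,d_k)$ under $\nu^{(k)}_{[n]}$ is proportional to $\prod_{i=1}^k(\lambda^{k-i+1})^{d_i}$ on $\{(d_i)\in\Z_{\ge0}^k:\sum_i d_i\le n-k\}$. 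Equivalently, it is the distribution making the $d_i$ independent with $d_i$ having mass $\propto(\lambda^{k-i+1})^m$ at $m\in\Z_{\ge0}$, conditioned on the event $D:=\{\sum_i d_i\le n-k\}$. The crucial observation is that the rightmost particle sits at $p_k=k+g_k=k+\sum_{i=1}^k d_i$, so the quantity to be bounded is precisely $\nu^{(k)}_{[n]}(\sum_{i=1}^k d_i\ge r)$.

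Next I would remove the truncation. Let $\widehat\nu$ denote the untruncated product measure above, so $\nu^{(k)}_{[n]}=\widehat\nu(\cdot\mid D)$. Since $\{\sum_i d_i\ge r\}$ is increasing and $D$ decreasing for the statistic $\sum_i d_i$, conditioning on $D$ only lowers its probability: assuming $r\le n-k$ (else the probability is $0$) and writing $a=\widehat\nu(\sum_i d_i<r)$, $b=\widehat\nu(r\le\sum_i d_i\le n-k)$, $c=\widehat\nu(\sum_i d_i>n-k)$, we have $a+b+c=1$ and $\tfrac b{a+b}\le b+c\iff ac\ge0$. Hence $\nu^{(k)}_{[n]}(\sum_i d_i\ge r)\le\widehat\nu(\sum_i d_i\ge r)$ with the $d_i$ genuinely independent. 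Relabelling $j=k-i+1$ and shifting each $d_i$ by $1$ to the convention of Lemma~\ref{lem:geometric-sum-tail}, we get $\sum_{i=1}^k d_i=\sum_{j=1}^kY_j-k$ with $Y_j\sim\mathsf{Geometric}(1-\lambda^j)$ independent, and, with $S:=\sum_{j=1}^k\tfrac1{1-\lambda^j}=k+E$ and $E:=\sum_{j=1}^k\tfrac{\lambda^j}{1-\lambda^j}$, the target probability equals $\Pr(\sum_{j=1}^kY_j\ge S+(r-E))$.

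Finally I would apply Lemma~\ref{lem:geometric-sum-tail} with $a_j=1-\lambda^j$ and $u=r-E$, using the elementary estimates already carried out in the proof of Lemma~\ref{lem:stationary-highprob} (split the range of $j$ at $\Theta(1/\eps)$): $\min_j a_j=1-\lambda=\tfrac{\eps}{1+\eps}\ge\tfrac\eps2$, $\sum_{j=1}^k\tfrac{1-a_j}{a_j^2}=\sum_j\tfrac{\lambda^j}{(1-\lambda^j)^2}=O(1/\eps^2)$, and $E=O(\tfrac1\eps\log\tfrac1\eps)$. Then for $r\ge2E$ one has $u\ge r/2$, so Lemma~\ref{lem:geometric-sum-tail} yields a bound $\exp(-c\min\{r^2\eps^2,r\eps\})$ for a universal $c$; and once $r\ge1/\eps$ the minimum equals $r\eps$, giving $\exp(-cr\eps)$. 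Since $\eps=\tfrac{2\eps'}{1/2-\eps'}\ge4\eps'$, this is $\le\exp(-4cr\eps')$, and taking $C_q$ large enough (as a function of $q$) that all the $O(\cdot)$ estimates hold and that the hypothesis $r\ge\tfrac4{\eps'}\log\tfrac2{\eps'}$ already forces both $r\ge2E$ and $r\ge1/\eps$, we recover $\exp(-\eps'r/4)$ after tracking the universal constant of Lemma~\ref{lem:geometric-sum-tail}.

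The only step that is not a direct reuse of Section~\ref{sec:spatial-mixing} is the product-form identification together with the truncation-removal argument, which I have isolated above; that, plus making the $\eps$-versus-$\eps'$ constants line up with the stated exponent and threshold, is where the (mild) care is needed, and is the step I would expect to be the main obstacle.
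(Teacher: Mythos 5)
Your proposal is correct and takes a genuinely different route from the paper. The paper proves the tail bound by lifting the ASEP back into $S_n$: it introduces the auxiliary permutation measure $\bar\mu$ (with $p_{i,j}=q$ across the cut $i\le k<j$ and $p_{i,j}=1$ within each block) whose pushforward under $\eta_k$ equals $\nu^{(k)}_{[n]}$, and then repeatedly applies Lemma~\ref{l:stoch-dom-geo} to bound $\bar\mu(\si^{-1}(1),\dots,\si^{-1}(k)<k+r)$ by a telescoping product of terms $1-(1+\ep)^{-(k+r-i)}$, summed directly. Your proof instead works natively on $\binom{[n]}{k}$: you write $\nu^{(k)}_{[n]}(Y)\propto\lambda^{\sum_v vY(v)}$, change variables to the gap increments $d_i=g_i-g_{i-1}$, identify their law as a truncated product of geometrics with parameters $1-\lambda^{k-i+1}$, remove the truncation by the $ac\ge0$ monotonicity observation (which is correct), and apply Lemma~\ref{lem:geometric-sum-tail}. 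Both hinge on the same product-form structure, but your parametrization is more self-contained (no detour through $S_n$), at the price of an extra truncation-removal step and reliance on the generic Chernoff bound of Lemma~\ref{lem:geometric-sum-tail}. The one soft spot, which you flag yourself, is the final constant: Lemma~\ref{lem:geometric-sum-tail} carries an unspecified universal constant $C$, so your final exponent is $c\,\eps r$ for some universal $c$ which is not a priori $\ge 1/16$; the paper's direct computation avoids this and gets the stated $\eps'/4$ cleanly (note also that in the paper's proof $\eps'$ is defined locally as $\min\{\eps,1\}$, not $q-\frac12$, so the quantity you manipulate at the end differs by a factor from the paper's). This is a constant-chasing issue rather than a gap in the method: to nail the exact constant one would need to either redo the Chernoff calculation for these particular geometrics by hand (in which case you may as well use the paper's direct bound $1-\prod_{i\ge r}(1-\lambda^i)\le\lambda^r\frac{1+\eps}{\eps}$, which falls out of your untruncated product measure in one line), or accept a weaker universal constant in the exponent, which is harmless for all downstream uses.
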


\begin{proof}
        Consider the distribution $\bar \mu$ on $S_n$ with 
    \[
p_{i,j} = 
\begin{cases}
    q, &i\le k<j\\
        1, &i,j\in \{1,\ldots, k\} \text{ or } i,j\in \{k+1,\ldots, n\}
\end{cases}\,.
    \]
    Recall the projection $\eta_k:S_n\to \binom{[n]}{k}$ defined in \Cref{def:projection-to-compare-to-ASEP}.
            Because the particles $\{1,\ldots,k\}$ must be in order and $\{k+1,\ldots, n\}$ must also be in order, the preimage of each $Y\in \binom{[n]}{k}$ is a single point, and
    \[(\eta_k)_* \bar\mu(Y)
    =
    q^{|\set{1\le i<j\le n}{Y(i)=0,Y(j)=1}|}
    \pa{1-q}^{|\set{1\le i<j\le n}{Y(i)=1,Y(j)=0}|}\,.
    \]
    It is easy to check that this distribution satisfies detailed balance with respect to the ASEP moves, and hence this is the stationary distribution of ASEP.

    Now letting $\ep$ be as in~\eqref{eq:q-eps-relation}, 
    by \Cref{l:stoch-dom-geo} applied to the probability conditioned on specific values of $\si^{-1}(1),\ldots, \si^{-1}(i-1)$ and then averaging,
    \[
\bar \mu(\si^{-1}(i) <k+r\mid
\si^{-1}(1),\ldots, \si^{-1}(i-1)<k+r)
\ge 1-\prc{1+\ep}^{k+r-i},
    \]
    noting that we are asking for the probability that $\si^{-1}(i)$ is one of the first $k+r-i$ available indices.
    Therefore,
    \begin{align*}
        \bar \mu(\si^{-1}(1),\ldots, \si^{-1}(k)<k+r)
        & \ge
        \prod_{i=1}^k \pa{1-\prc{1+\ep}^{k+r-i}}
        \ge
        \prod_{i=r}^{\iy}\pa{1-\prc{1+\ep}^i}\\
        &\ge 1-\sum_{i=r}^{\iy} \prc{1+\ep}^i
        = 1 - \prc{1+\ep}^r \fc{1+\ep}{\ep}
                            \end{align*}
                                                Hence
    \begin{align*}
    \mathbb P( \max \{v: Y^{(k)}(v) =1\} \ge k + r) &\le 
\prc{1+\ep'}^r \fc{1+\ep'}{\ep'}\\
&\le e^{-\ep' r/4}\cdot e^{-\ep' r/4} \fc{1+\ep'}{\ep'}\le e^{-\ep' r/4}\,,
    \end{align*}
        where $\ep'=\min\{\ep,1\}$ and $r\ge \fc{4}{\ep'} \log \pf{2}{\ep'}$.
\end{proof}

Since we also want a burn-in estimate for block dynamics, not just the adjacent transposition chain, we also require a mixing time estimate for $q$-ASEP block dynamics. 

We define a block-update dynamics for the ASEP, and argue that the mixing time of this block dynamics of the ASEP is order $1$ and moreover, that its inverse spectral gap is order~$1$.

Consider the block dynamics with the following blocks (West and East): \begin{align*}
    B_W = \{1,...,\lceil 2n/3\rceil\} \qquad B_E= \{\lfloor n/3\rfloor,...,n\}\,. 
\end{align*}
In each step, randomly pick $B\in\{B_W,B_E\}$ to update, and when block $B$ is selected to update, the configuration on $B$ is replaced by a stationary sample of the $q$-ASEP on $B$ conditional on the configuration on $B^c$. Note that this distribution is simply that of a $q$-ASEP on $B$ with the number of particles equal to $k$ minus the number on $B^c$.

\begin{lemma}\label{lem:ASEP-block-mixing-time}
    Consider the block dynamics for $q$-ASEP on an interval of length $n$ with $k\le n$ particles. There exists a constant $C_1$ independent of $\eps$ such that for all $n \ge C_1\log (1/\eps) /\eps$, the mixing time and inverse spectral gap of the block dynamics is at most $C_1$. 
\end{lemma}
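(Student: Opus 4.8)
It suffices to prove that the mixing time is bounded by an absolute constant: the bound on the inverse spectral gap then follows from the standard comparison $\gamma^{-1}\le 1+\Tmix(1/4)/\log 2$ \cite[Theorem 12.5]{LP}. The first reduction is to monotonicity. A direct computation with the product form of $\nu^{(k)}_{[n]}$ shows that for $B\in\{B_W,B_E\}$ the heat-bath law of $\sigma(B)$ given $\sigma(B^c)$ depends on $\sigma(B^c)$ \emph{only through the number of particles it contains}; it is simply the $q$-ASEP on $B$ with $k-|\sigma(B^c)\cap\text{particles}|$ particles. Combining this with Lemma~\ref{lem:ASEP-monotonicity} and stochastic monotonicity of the $q$-ASEP in the number of particles, one builds a monotone coupling of two block-dynamics chains: when $X\succeq Y$ and block $B$ is resampled (with $m_X,m_Y$ particles, a count unchanged by the resample), couple the two block-resamples so that the one with more particles dominates; writing $h_\sigma(r)$ for the number of particles of $\sigma$ in $\{1,\dots,r\}$, a one-line check using $\delta:=h_X-h_Y\ge0$ at the left endpoint of $B$, $h_X\ge h_Y$ at the right endpoint, and $m_Y-m_X\le\delta$, shows $X\succeq Y$ is preserved. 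In particular, when $m_X=m_Y$ this is the \emph{identity} coupling. Hence it suffices to couple the two extreme chains started from the all-particles-left configuration $\eta_{\max}$ and the all-particles-right configuration $\eta_{\min}$ and show they coalesce within $O(1)$ updates with probability $\ge 3/4$.

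\emph{Burn-in.} I would first show that a fixed bounded prefix of updates (e.g.\ $B_W,B_E,B_W,B_E$) brings both chains within distance $\ell_*=O(1/\eps)$ of the ground state $(1,\dots,n)$, except with probability $e^{-\Omega(1)}$. This is a finite computation: after each resample of a block $B$ with $m$ particles, Lemma~\ref{lem:ASEP-stationary-estimate} and its particle-hole and left-right reflections show that $\sigma(B)$ is within $O(1/\eps)$ of the fully packed profile on $B$, except with probability $e^{-\Omega(1)}$; tracking the induced boundary counts $h(\lfloor n/3\rfloor-1)$ and $h(\lceil 2n/3\rceil)$ through the four alternating resamples, one checks that the packed profiles on $B_W$ and $B_E$ become consistent with the global ground state up to $O(1/\eps)$. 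A bounded number of resamples suffices precisely because each block covers all but a $\Theta(n)$-sized buffer of the other, and once a configuration is $\ell_*$-localized any further block resample keeps it $O(1/\eps)$-localized.

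\emph{Coalescence after burn-in.} Here I would use the dichotomy that, since $n\ge C_1\log(1/\eps)/\eps$ with $C_1$ a sufficiently large absolute constant, for \emph{every} $1\le k\le n$ either $k\ge n/3+C_2/\eps$ or $k\le 2n/3-C_2/\eps$ (the complementary window is empty), where $C_2$ is an absolute constant fixed below. Treat the first case; the second is identical under particle-hole reversal, which exchanges $B_W$ and $B_E$. Suppose both chains are $\ell_*$-localized and the next two updates are $B_W$ then $B_E$. After the $B_W$ resample each chain has $m=k-|\cdot\cap B_W^c|\ge \min(k,2n/3)-O(1/\eps)\ge n/3+\Omega(C_2/\eps)$ particles in $B_W$, so its packed profile reaches well past position $n/3$; hence $\{1,\dots,\lfloor n/3\rfloor-1\}$ is entirely occupied, except with probability $e^{-\Omega(C_2)}$ (a hole there would require an overhang of size $\Omega(C_2/\eps)$, ruled out by Lemma~\ref{lem:ASEP-stationary-estimate}). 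Thus both chains agree on $B_E^c=\{1,\dots,\lfloor n/3\rfloor-1\}$ — both are all-ones there. The subsequent $B_E$ resample then draws $\sigma(B_E)$ from the \emph{same} conditional law for both chains ($q$-ASEP on $B_E$ with $k-(\lfloor n/3\rfloor-1)$ particles, which by the first paragraph does not depend on the boundary beyond its particle count), so the identity coupling makes the chains agree on $B_E$, hence on $B_E\cup B_E^c=[n]$: they have coalesced.

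\emph{Conclusion.} Choose $C_2$ large enough that every $e^{-\Omega(C_2)}$ (and $e^{-\Omega(1)}$) failure probability above is at most $1/100$, and then $C_1$ large enough that $n\ge C_1\log(1/\eps)/\eps$ leaves room for all the $O(1/\eps)$ margins (so the dichotomy holds with a genuine gap). In a window of $6$ updates, with probability $\ge(1/2)^6$ the updates are the burn-in prefix followed by the correct $2$-update pattern, and on that event the chains coalesce with probability $\ge 1-1/50$; on failure both chains remain $\ell_*$-localized and we retry the $2$-update pattern, succeeding with probability $\ge(1/2)^2(1-1/50)\ge 1/5$ per attempt, so after $O(1)$ attempts the chains have coalesced with probability $\ge3/4$. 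Hence $\Tmix=O(1)$ with an absolute constant, and $\gamma^{-1}=O(1)$ follows. The step I expect to be the main obstacle is the bookkeeping of the burn-in and the two-step coalescence in the boundary regime $k\approx n/3$ or $k\approx 2n/3$, where the stationary $q$-ASEP interface sits near a block boundary: it is handled only via the above dichotomy (which is available exactly because $n\gg 1/\eps$), using that whichever of $B_W,B_E$ is the ``large'' block contains $\gg n/3$ particles and so its packed bulk reaches past the opposite block's boundary; the other genuinely-used fact is that the $q$-ASEP heat-bath block update depends on the boundary only through a particle count, without which the two chains' $B_E$-resamples would come from different laws and the identity coupling would not close.
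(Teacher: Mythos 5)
Your proposal is correct in substance but takes a genuinely different route from the paper's. The paper's proof argues directly in total variation: it fixes a deterministic sequence of four alternating block updates, uses Lemma~\ref{lem:ASEP-stationary-estimate} (and particle-hole duality for $k>n/2$) to show each update shrinks the particle support, and concludes that after the fourth update the law of the chain is \emph{exactly} $\nu^{(k)}_{B_W}$, which is $O(e^{-\Omega(1)})$-close in TV to $\nu^{(k)}_{[n]}$. Your proof instead runs a monotone coupling of the two extremal initializations, performs the same kind of left-packing burn-in, and then closes the coupling with a two-update pattern using a dichotomy on $k$ relative to $n/3$ and $2n/3$. Both arguments hinge on Lemma~\ref{lem:ASEP-stationary-estimate} and on $n\gg 1/\eps$, and both bound the mixing time by an absolute constant; your coupling framework buys you a cleaner reduction to extremal chains, while the paper's version is slightly shorter because it never needs to discuss monotonicity of the heat-bath block update. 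The one point you rely on that the paper does not supply is \emph{stochastic monotonicity of the $q$-ASEP stationary distribution in the number of particles} (needed to make the block resample a monotone map when $m_X\neq m_Y$). This is true and essentially follows by rerunning the grand coupling of Lemma~\ref{lem:ASEP-monotonicity} from ordered initial conditions with $m$ and $m'$ particles — Observation~\ref{obs:ways-to-violate-monotonicity} is stated for configurations in $\binom{[n]}{k}$ with equal $k$, but its height-function proof never uses that, so it extends verbatim — but you should state and prove (or cite) this extension explicitly rather than invoking it as folklore, since Lemma~\ref{lem:ASEP-monotonicity} as written is only for a fixed particle count. With that lemma in hand, your argument goes through and gives the same conclusion as the paper's.
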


\begin{proof}
    We first consider the case $k\le n/2$. By Lemma~\ref{lem:ASEP-stationary-estimate} and the lower bound on $n$, the stationary distribution of $k$-particle $q$-ASEP on $[n]$ and $k$-particle $q$-ASEP on $B_{W}$ it suffices to couple the chain $Y_{t}^{(k)}$ from any initialization $Y^{(k)}_0$ to that stationary distribution $\nu_{[\ce{2n/3}]}^{(k)}$.

    We claim that so long as $n \ge \frac{C \log (1/\eps)}{\eps}$ for a uniform constant $C$, then the mixing time of the block dynamics is an $\eps$-independent constant. 
    \begin{enumerate}
        \item If the East block is updated first, then by Lemma~\ref{lem:ASEP-stationary-estimate} and the lower bound on $n$, with probability at least $0.99$, the number of particles $k_E$ that remain in $B_E\setminus B_W$ is at most $n/2 - n/3 + \delta n < n/(5.5)$ (for $\delta = 1/100$).
        \item If the West block is updated next, then by Lemma~\ref{lem:ASEP-stationary-estimate} and the lower bound on $n$, with probability at least $0.99$ at most $n/2 - k_E - n/3 + \delta n$ will remain in $B_E \cap B_W$ and therefore at most $n/(5.5) + \delta n <n/5$ many particles will remain in $B_E$. 
        \item If the East block is updated next, by Lemma~\ref{lem:ASEP-stationary-estimate} and the lower bound on $n$, with probability at least $0.99$, all particles will be in $B_W$; 
        \item If the West block is updated next, then the distribution will be exactly equal to the stationary $q$-ASEP distribution conditioned on all particles being in $B_W$.
    \end{enumerate}
    The above bounds the mixing time on a segment of length $n \ge (C \log (1/\eps))/\eps$ for a uniform constant $C$ by the minimum time by which with probability $\ge 0.99$, we get the above four consecutive updates. This therefore gives a uniform constant bound on the block mixing time (time $100$ should for instance suffice). 

    For $k \ge n/2$, we argue by symmetry: 
    consider the mapping which flips 0 and 1, so that the particles are following $q$-ASEP where the bias is to the right instead of to the left, and where the number of particles is at most $n/2$. The argument then proceeds exactly symmetrically to the above. 
\end{proof}

\subsection{Proofs of burn-in consequences}

We are now able to combine the results of the previous two subsections to deduce the burn-in results for the adjacent transposition and block chains for the biased adjacent transposition shuffle. 

\begin{proof}[\textbf{\emph{Proof of Lemma~\ref{lem:single-site-burn-in}}}]
        By Lemma~\ref{lem:comparison-to-ASEP-ell-restricted}, the $k$th particle of $\sigma_t$ is to the left of the right-most particle of the $k$-particle ASEP, $Y^{(k)}_t$. By the mixing time and inverse spectral gap bounds of Lemma~\ref{lem:ASEP-mixing-time}, for each $k$, for large enough $n$, for all $t \ge T_1:=C(\ep) n^2 + r  n\log n$ for $C(\eps) = O\pf{1}{\eps}$
                                and constant $r$, 
        one has 
        \begin{align*}
            \max_{y_0}\|\mathbb P_{y_0}(Y^{(k)}_t\in \cdot) - \nu^{(k)}\|_{\tv} \le n^{-12}\,.
        \end{align*}
        In particular, combined with Lemma~\ref{lem:ASEP-stationary-estimate} and ensuring $n\ge n_0 = \Omega(\frac{\log (1/\eps)}{\eps})$ and $\ell = \Omega(\frac{\log (1/\eps)}{\eps})$, we have for all $t\ge T_1$ that  
        \begin{align*}
            \mathbb P( \exists k: \max \{v: Y^{(k)}_t(v) = 1\}\ge k+\ell) \le n \big(n^{-12} + e^{ - \eps \ell/4} \big).
        \end{align*}
                By the stochastic domination between $Y_t^{(k)}$ and $\si_t$, this implies 
        \begin{align*}
            \mathbb P( \exists k: \sigma_t^{-1}(k)\ge k+\ell_0) \le n \big(n^{-12} + e^{ - c_\eps \ell_0} \big)
        \end{align*}
        which is at most $n^{-10}/2$ for $\ell_0 = C'(\ep) \log n$ with a large enough constant $C'(\ep) = O\prc{\eps}$. To localize in the other direction, consider ASEP where holes are swapped with particles, and carry out the identical argument symmetrically to also obtain
        \begin{align*}
            \mathbb P( \exists k: \sigma_t^{-1}(k)\le k-\ell_0) \le \frac{1}{2} n^{-10}\,.
        \end{align*}
        Together with a union bound, these two inequalities imply the lemma. 
\end{proof}

\begin{proof}[\textbf{\emph{Proof of Lemma~\ref{lem:restricted-block-dynamics-burn-in}}}]
        By the stochastic domination of the stationary distributions from Lemma~\ref{lem:comparison-to-ASEP-ell-restricted}, the projections of the block dynamics via $\eta^{(k)}$ are each stochastically dominated by the $k$-particle ASEP block dynamics. 

    By the spectral gap bound of Lemma~\ref{lem:ASEP-block-mixing-time}, there is a constant $C_1$ not depending on $\ep$ and $n_0(\ep) = O(\log (1/\eps)/\eps)$ such that as long as $n \ge n_0(\ep)$, after $C_1 \log n$ many steps of block dynamics, for any $k\le n$, the TV-distance of the ASEP block dynamics is within $n^{-12}$ from its stationary distribution. Following the same reasoning as for the single site dynamics, that implies that 
        \begin{align*}
            \mathbb P( \exists k: \sigma_t^{-1}(k)\notin [k-\ell_0,k+\ell_0]) \le 2 n \big(n^{-12} + e^{ - \eps \ell_0/4}\big) \le n^{-10}\,,
        \end{align*}
        implying the claimed burn-in time. 
\end{proof}

\section{Polynomial mixing time}
\label{sec:poly-mixing}

In this section we prove a more detailed version of the polynomial mixing time bound stated in \Cref{thm:poly}.

\begin{proposition}\label{prop:polynomial-mixing-time}
    There exists $C>0$ such that for all $\ep>0$, there exists $C'(\ep) = e^{\tilde O(1/\eps^2)}$, such that if $\vec{\ell}\in \mathcal L_n$, then $\Mnn$ restricted to $\vec{\ell}\loc$ has mixing time at most $C'(\ep) n^C$.
\end{proposition}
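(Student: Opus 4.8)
The plan is to mirror the proof of \Cref{thm:poly} line by line, with two upgrades: (i) replace every ingredient by its conditional ($\vloc$-restricted) counterpart — \Cref{lem:restricted-block-dynamics-burn-in} in place of the block burn-in, \Cref{lem:spatial-mixing} in place of \Cref{lem:spatial-mixing-simpler}, and the general form \Cref{lem:block-mixing-time} of \Cref{lem:block-mixing-time-simpler} — and (ii) keep track of the $\eps$-dependence of every constant. As in \Cref{thm:poly} it suffices to prove the inverse-spectral-gap statement: there is an absolute constant $C>0$ so that for every $\eps>0$, every $\eps$-positively biased $\vec p$, and every $\vell\in\mathcal L_n$, the chain $\Mnn$ restricted to $\vloc$ has $\gamma^{-1}\le C'(\eps)n^C$ with $C'(\eps)=e^{\tilde O(1/\eps^2)}$; the mixing-time bound then follows from this by a standard argument as in \Cref{thm:poly}. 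The proof is an induction on the scale $n$.

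\textbf{Base case.} Fix a threshold $n_0(\eps)$ large enough that \Cref{lem:single-site-burn-in}, \Cref{lem:restricted-block-dynamics-burn-in}, \Cref{lem:spatial-mixing}, and \Cref{lem:block-mixing-time} all apply whenever $n\ge n_0(\eps)$; tracing the constants, $n_0(\eps)=e^{O(1/\eps)}$ works, the binding requirement being that the spatial-mixing decay $\exp(-r/(C'(\eps)\ell_{\max}))$ from \Cref{lem:spatial-mixing} — where $C'(\eps)=e^{O(1/\eps)}$ and $\ell_{\max}=O(\log n/\eps)$ is the burn-in localization scale — be $o(1)$ across the $r=\Theta(n)$ overlap of $B_W$ and $B_E$ at the smallest recursion scale. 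For $n\le n_0(\eps)$ we bound $\gamma^{-1}$ of the restricted chain crudely and uniformly in $\vec p$: since for any adjacent pair the transition probability toward its more-ordered arrangement is at least $q=(1+\eps)/(2+\eps)>1/2$, a coupling that always attempts to order the selected pair coalesces to the identity in $\poly(n)$ steps regardless of how close the entries of $\vec p$ are to $1$, so $\gamma^{-1}\le\poly(n_0(\eps))$. Absorbing the cruder bookkeeping of the conditional version and leaving room for the lower-order terms produced below, we record the base case as $C'(\eps)=e^{\tilde O(1/\eps^2)}$.

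\textbf{Inductive step.} Assume the claim for all $N<n$ and let $n\ge n_0(\eps)$. Apply the block-dynamics decomposition (the general form \Cref{prop:block-dynamics} of \eqref{eqn:simple-block-dynamics}) to $\Mnn$ restricted to $\vloc$ with the blocks $B_W,B_E$ from the definition of $\Mwe$ and stationary measure $\mu(\cdot\mid\vloc)$:
\[
\gamma\big(\Mnn\text{ on }\vloc\big)^{-1}\ \le\ 2\,C_{\mathsf{block}}\cdot\max_{B\in\{B_W,B_E\}}\ \max_{\eta}\ \gamma\big(\Mnn(B^\eta)\text{ on }\vloc\big)^{-1}.
\]
By \Cref{lem:block-mixing-time} (whose proof uses \Cref{lem:restricted-block-dynamics-burn-in} and \Cref{lem:spatial-mixing}, and requires $n\ge n_0(\eps)$) the block chain restricted to $\vloc$ satisfies $\gamma(\Mwe\text{ on }\vloc)^{-1}\le C_{\mathsf{block}}$ with $C_{\mathsf{block}}$ independent of $\eps$. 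For each fixed $B$ and boundary configuration $\eta$, \Cref{l:relabeling} identifies $\Mnn(B^\eta)$ restricted to $\vloc$ with the adjacent-transposition chain on a segment of length at most $\lceil 2n/3\rceil$, with an $\eps$-positively biased probability vector (item 3) and restricted to an admissible $\vell'\loc$ with $\ell_{\max}'\le\ell_{\max}$ (item 2). By the inductive hypothesis its inverse gap is at most $C'(\eps)(2/3)^Cn^C$ (for $n$ above an absolute constant), hence
\[
\gamma\big(\Mnn\text{ on }\vloc\big)^{-1}\ \le\ 2C_{\mathsf{block}}\cdot C'(\eps)\,(2/3)^Cn^C.
\]
Choosing the absolute constant $C$ large enough that $2C_{\mathsf{block}}(2/3)^C\le 1$ closes the induction and yields the claim with $C$ absolute and $C'(\eps)=e^{\tilde O(1/\eps^2)}$.

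\textbf{Main obstacle.} The genuinely new content — the conditional spatial-mixing estimate (\Cref{lem:spatial-mixing}) and the ASEP-comparison burn-in in its $\vloc$-form (\Cref{lem:single-site-burn-in}, \Cref{lem:restricted-block-dynamics-burn-in}, and, through \Cref{lem:block-mixing-time}) — is established in \Cref{sec:spatial-mixing} and \Cref{sec:burn-in}, so what remains here is essentially the $\eps$-accounting: confirming that the crossover scale $n_0(\eps)$ past which the block gap becomes $\eps$-independent can be taken as $e^{O(1/\eps)}$ (this is exactly the tension between the $e^{O(1/\eps)}$ rate in \Cref{lem:spatial-mixing} and the $O(\log n/\eps)$ localization scale of the burn-in), and checking that the base case together with the $O(\log n)$ recursion levels — each multiplying by only the $\eps$-free factor $2C_{\mathsf{block}}$ — telescopes to $C'(\eps)n^C$ with $C$ absolute. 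A secondary subtlety is the passage from the gap bound to the mixing-time bound when $\vec p$ has entries near $1$, where $\mu_{\min}$ is tiny and the naive estimate $\Tmix\le\gamma^{-1}\log(1/\mu_{\min})$ is lossy; this is handled as in \Cref{thm:poly}, using that after the $O(n^2/\eps)$-step burn-in of \Cref{lem:single-site-burn-in} the chain is $(C_0\log n)$-localized, which controls the remaining distance before one applies the gap-based contraction.
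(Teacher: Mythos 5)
Your overall framework matches the paper's: induction on the segment length, the $B_W,B_E$ block decomposition via \Cref{prop:block-dynamics}, the relabeling of a block via \Cref{l:relabeling} to a smaller $\eps$-positively biased chain restricted to an admissible $\vell'\loc$, and closing the induction with $2C_{\mathsf{block}}(2/3)^C<1$. The genuine gap is in the base case and the $\eps$-accounting. You collapse everything into a single threshold $n_0(\eps)=e^{O(1/\eps)}$ below which you assert, without a real argument, that a grand coupling ``that always attempts to order the selected pair'' coalesces in $\poly(n)$ steps uniformly in $\vec p$. No such polynomial coupling bound is established anywhere in the paper — and indeed the paper's own base-case argument only yields $e^{\tilde O(N^2)}$ at scale $N$ (by forcing $N$ consecutive right-to-left sorting sweeps, each with probability $(\tfrac12 N^{-1})^{N}$). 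Plugging in your threshold $N=e^{O(1/\eps)}$ gives $e^{\tilde O(e^{O(1/\eps)})}$, which is doubly exponential in $1/\eps$ and far exceeds the claimed $C'(\eps)=e^{\tilde O(1/\eps^2)}$.

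The paper sidesteps this precisely because \Cref{lem:block-mixing-time} has \emph{two} regimes, and you use only the first. The base case is taken at the much smaller scale $n_0=O(\log(1/\eps)/\eps)$, where the crude coupling gives $e^{\tilde O(n_0^2)}=e^{\tilde O(1/\eps^2)}$. Then over the intermediate regime $N\in[n_0,n_1]$ with $n_1=e^{O(1/\eps)}$ — where the block gap is only $C'_{\mathsf{block}}(\eps)=e^{O(1/\eps)}$, not $\eps$-free — the recursion is still run, at the cost of a larger exponent $C_2/\eps+C$ on $N$. At $N=n_1$ the overhead $n_1^{C_2/\eps+C}=e^{O(1/\eps^2)}$ is absorbed into $C'(\eps)$, and only from $n_1$ onward does the $\eps$-independent constant $C_{\mathsf{block}}$ kick in and give the clean exponent $C$. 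This two-tier bookkeeping, driven by the second bullet of \Cref{lem:block-mixing-time}, is exactly what produces the $e^{\tilde O(1/\eps^2)}$ dependence; without it, and without a proof of your polynomial base-case claim, the stated constant is not established.
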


As a corollary, by taking $\vec{\ell} = (-\infty,\infty)_i$, which is evidently admissible, this gives polynomial mixing of the original unrestricted chain as stated in \Cref{thm:poly}.

The proof of \Cref{prop:polynomial-mixing-time} combines the burn-in estimates of Section~\ref{sec:burn-in} with the spatial mixing estimates amongst burnt-in configurations from Section~\ref{sec:spatial-mixing} to show that the adjacent transposition chain restricted to any admissible $\vec{\ell}\loc$ has polynomial mixing time. 

\subsection{Block dynamics}

The block dynamics is a standard tool used in spin system dynamics for upper bounding mixing times using spatial self-similar structure of the process. It is typically phrased in the context of spin system dynamics. 

Let us first define what we mean by block dynamics for the biased adjacent transposition shuffle dynamics. In this subsection we are using $N$ to denote an arbitrary length of the 1D chain. 

\begin{definition}\label{def:block-dynamics}
Let $\cB=\{B_1,\dots,B_\ell\}$ denote any collection of subsets of $[N]$ where $\bigcup_i B_i=[N]$.  
The heat-bath block dynamics 
is the {discrete-time} Markov chain which, from a configuration $X_t$, chooses a block $B_i$ with probability $|B_i|/(\sum_i |B_i|)$ from $\cB$, and resamples the entire configuration $X_t(B_i)$ 
according to the stationary distribution conditioned on $X_{t}(B_i^c)$.
\end{definition}

In the context of the biased adjacent transposition shuffle and its stationary distribution $\mu$, as long as 
the edge-sets of the blocks cover the edges of $[N]$, then the block dynamics is ergodic and reversible with respect to $\mu$. Ergodicity follows since the block dynamics can simulate any adjacent transposition,  and reversibility is straightforward from the definition of heat-bath. 

The block dynamics are important for analyzing spin systems because they give us the ability to upper bound the inverse gap of the adjacent transposition chain on $[N]$ by the worst inverse gap of the adjacent transposition chain on a block, times the inverse gap of the block dynamics chain. 
More precisely, the following decomposition result is standard in the analysis of the Glauber dynamics for spin systems, see e.g., \cite[Proposition 3.4]{Martinelli}. 
Our setting does not exactly fit into these spin system setting because two vertices update their assignments simultaneously, but the proof is essentially the same. We include a proof for completeness.

    \begin{proposition}\label{prop:block-dynamics}
    Let $\chi(\cB) = \max_{j\in [N]} |\{i: B_i \ni j\}|$ denote the maximum coverage
    of a position in $[N]$.
    For any collection of blocks $\cB$, we have the following:
    \begin{align*}
        \gamma(\Mnn([N]))\ge \chi(\cB)^{-1}  \cdot \gamma(\mathcal M_\cB) \cdot \min_{\eta, i} \gamma(\Mnn(B_i^\eta))\,,
    \end{align*}
    where $\gamma(\Mnn([N]))$ is the spectral gap of $\Mnn$ on $[N]$, $\gamma(\mathcal M_\cB)$ is the spectral gap of the  block dynamics defined by $\cB$, and $\gamma(\Mnn(B_i^\eta))$ is the spectral gap of $\Mnn$ restricted to the set $B_i$ with configuration $\eta$ on $B_i^c=[N]\setminus B_i$. 
\end{proposition}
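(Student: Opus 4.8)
The plan is to run the standard ``decomposition of variance'' argument adapted to the two-site update structure, via the variational characterization of the spectral gap. Recall that for a reversible chain $\M$ with stationary law $\mu$ and Dirichlet form $\mathcal{E}_\M(f,f)=\langle f,(I-P)f\rangle_\mu$, one has $\gamma(\M)=\inf_f \mathcal{E}_\M(f,f)/\Var_\mu(f)$ over non-constant $f$. So fix a non-constant $f$; it suffices to show
\[
\mathcal{E}_{\Mnn([N])}(f,f)\ \ge\ \chi(\cB)^{-1}\,\gamma(\mathcal M_\cB)\,\gamma_{\mathrm{loc}}\,\Var_\mu(f),\qquad \gamma_{\mathrm{loc}}:=\min_{\eta,i}\gamma(\Mnn(B_i^\eta)).
\]
The one structural fact we use is that a heat-bath update on block $B_i$ acts on $L^2(\mu)$ exactly as the orthogonal projection $\Pi_i:=\E_\mu[\,\cdot\mid X(B_i^c)]$ onto the $\sigma(X(B_i^c))$-measurable functions: it is self-adjoint and idempotent, so $\langle f,(I-\Pi_i)f\rangle_\mu=\E_\mu[\Var_\mu(f\mid X(B_i^c))]$. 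Since the block-dynamics kernel is $\sum_i \tfrac{|B_i|}{\sum_j|B_j|}\Pi_i$, this gives the identity
\[
\mathcal{E}_{\mathcal M_\cB}(f,f)=\sum_i \frac{|B_i|}{\sum_j|B_j|}\,\E_\mu\big[\Var_\mu(f\mid X(B_i^c))\big].
\]

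Next, for each block $B_i$ and boundary condition $\eta$ on $B_i^c$, the restricted chain $\Mnn(B_i^\eta)$ is reversible with respect to $\mu(\cdot\mid X(B_i^c)=\eta)$, and ergodic (this uses that the block edge-sets cover $[N]$, cf.\ Claim~\ref{cl:ooo}), hence $\gamma(\Mnn(B_i^\eta))\Var_{\mu(\cdot\mid\eta)}(f)\le \mathcal{E}_{\Mnn(B_i^\eta)}(f,f)$. Taking expectation over $\eta$ distributed as $X(B_i^c)$ under $\mu$, bounding $\gamma(\Mnn(B_i^\eta))\ge \gamma_{\mathrm{loc}}$, and using the tower property of conditional variance, we get $\gamma_{\mathrm{loc}}\,\E_\mu[\Var_\mu(f\mid X(B_i^c))]\le \E_\eta[\mathcal{E}_{\Mnn(B_i^\eta)}(f,f)]$. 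Combining with the previous display and with $\gamma(\mathcal M_\cB)\Var_\mu(f)\le \mathcal{E}_{\mathcal M_\cB}(f,f)$ chains the inequalities into
\[
\gamma(\mathcal M_\cB)\,\gamma_{\mathrm{loc}}\,\Var_\mu(f)\ \le\ \sum_i \frac{|B_i|}{\sum_j|B_j|}\,\E_\eta\big[\mathcal{E}_{\Mnn(B_i^\eta)}(f,f)\big].
\]

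It remains to bound the right-hand side by $\chi(\cB)\,\mathcal{E}_{\Mnn([N])}(f,f)$, and this combinatorial bookkeeping is where the care is needed (and where the two-site nature of our update, unlike standard single-site Glauber, has to be checked to be harmless). Write $\mathcal{E}_{\Mnn([N])}(f,f)=\tfrac1{N-1}\sum_{e}\tilde{\mathcal{E}}_e(f,f)$, the sum over the $N-1$ adjacent edges, where $\tilde{\mathcal{E}}_e(f,f)=\tfrac12\sum_\sigma \mu(\sigma)\,r_e(\sigma)\,(f(\sigma)-f(\sigma\circ e))^2$ is the un-normalized local contribution of the single Metropolis move at edge $e$ (which swaps the two particles at $e$ toward their preferred order, and depends only on those two particles). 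Because that local move is literally the same in $\Mnn([N])$ and in $\Mnn(B_i^\eta)$ for $e$ internal to $B_i$, integrating $\mathcal{E}_{\Mnn(B_i^\eta)}(f,f)$ over $\eta\sim X(B_i^c)$ reconstructs the full $\mu$-marginal and gives exactly $\E_\eta[\mathcal{E}_{\Mnn(B_i^\eta)}(f,f)]=\tfrac1{|B_i|-1}\sum_{e\in E(B_i)}\tilde{\mathcal{E}}_e(f,f)$. Substituting, using $|B_i|/(|B_i|-1)\le 2$ together with $\sum_j|B_j|\ge N$ (and in the overlapping-block regime actually $\sum_j|B_j|$ substantially exceeds $N$, which is what lets the constant be exactly $\chi(\cB)$), and the fact that each edge $e$ lies in at most $\chi(\cB)$ blocks, reassembles the sum as $\le \chi(\cB)\,\mathcal{E}_{\Mnn([N])}(f,f)$. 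Dividing through by $\Var_\mu(f)$ and taking the infimum over $f$ yields the proposition. The main obstacle is precisely this final accounting: matching the index-selection normalizations $\tfrac1{N-1}$ versus $\tfrac1{|B_i|-1}$, the block weights $|B_i|/\sum_j|B_j|$, and the multiplicity $\chi(\cB)$ so that the averaged restricted Dirichlet forms cleanly recover the full-chain Dirichlet form; everything before it is the routine heat-bath $=$ orthogonal-projection decomposition.
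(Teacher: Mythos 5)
Your proposal is essentially the same argument as the paper's: decompose the block Dirichlet form as a weighted sum of conditional variances via the heat-bath-as-projection identity, lower-bound each conditional variance by the restricted Dirichlet form over $\gamma_{\mathrm{loc}}$, then reassemble by observing that integrating the restricted Dirichlet form over boundary conditions recovers the per-edge terms of the full chain, with over-counting controlled by $\chi(\cB)$.

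The only place where you and the paper diverge is in the edge-count normalization for the restricted chain, and you correctly flagged the issue yourself. You normalize the within-block Dirichlet form by $1/(|B_i|-1)$ (the number of edges in $B_i$), which is the literal transition rate of the adjacent-transposition chain on a segment of length $|B_i|$; this leaves you with $\sum_{i:e\in E(B_i)}\tfrac{|B_i|}{(\sum_j|B_j|)(|B_i|-1)}$ to compare against $\tfrac{\chi}{N-1}$, and in general the bound $|B_i|/(|B_i|-1)\le 2$ only yields $2\chi$ unless the blocks are large or overlap substantially. The paper instead normalizes by $1/|B_i|$, which makes the $|B_i|$ factor in the block-selection weight cancel exactly and gives $\chi$ cleanly, at the (unremarked) cost of slightly understating $\mathcal E_{B_i}^\sigma$ relative to the chain whose gap appears in the bound. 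Neither version is airtight to the stated constant for arbitrary $\cB$, but both are correct up to a $\max_i |B_i|/(|B_i|-1)$ factor, and in the paper's actual applications (two blocks of size $\approx 2N/3$, or many blocks of size $\Theta((\log n)^3)$) that factor is $1+o(1)$, so the distinction is immaterial. Your instinct to track the $N-1$ versus $|B_i|-1$ mismatch carefully was the right one; the hand-wave at the end (``the overlap lets the constant be exactly $\chi$'') is true in the regimes where the proposition is invoked, but it is not quite a proof of the proposition as stated, and you should note explicitly that the result as used is only needed up to such constants.
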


\begin{proof}
    By definition of the block dynamics transition rates and their Dirichlet form, 
    \begin{align*}
        \mathcal{E}_{\mathcal B}(f,f) \le \frac{1}{2}\sum_{\sigma, \sigma'} \mu(\sigma) (f(\sigma) - f(\sigma'))^2 \frac{1}{|\mathcal B|}\sum_{i} \frac{|B_i|}{\sum_j |B_j|}\mu_{B_i}^\sigma(\sigma') \mathbf1\{\sigma(B_i^c) = \sigma'(B_i^c)\}\,
    \end{align*}
    where $\mu_{B_i}^{\si}$ denotes the distribution on $B_i$ conditional on equaling $\si$ on $B_i^c$.
    By the Markov property for the distribution, $\mu(\sigma) = \mu(\sigma(B_i^c))\mu^\sigma_{B_i}(\sigma(B_i))$, so we can rewrite this sum as 
    \begin{align*}
       \frac{1}{2} \frac{1}{|\mathcal B|}\sum_i \sum_{\sigma(B_i^c)} \mu(\sigma(B_i^c)) & \sum_{\sigma(B_i),\sigma'(B_i)}\frac{|B_i|}{\sum_j |B_j|} \mu^\sigma_{B_i} (\sigma(B_i)) \mu^{\sigma'}_{B_i}(\sigma'(B_i)) (f(\sigma ) - f(\sigma'))^2 \\ 
       & = \frac{1}{|\mathcal B|}\sum_i  \frac{|B_i|}{\sum_j |B_j|}\sum_{\sigma(B_i^c)} \mu(\sigma(B_i^c)) \text{Var}_{B_i}^\sigma(f)\,.
    \end{align*}
    Plugging in the spectral gap of $B_i$ with boundary conditions, we get 
    \begin{align}\label{eq:block-dynamics-dirichlet-form-bound}
        \mathcal{E}_{\mathcal B}(f,f) \le \frac{1}{|\mathcal B|} \sum_i \frac{|B_i|}{\sum_j |B_j|}\sum_{\sigma(B_i^c)} \mu(\sigma(B_i^c)) \frac{\mathcal E_{B_i}^\sigma(f,f)}{\gamma(\Mnn(B_i^\sigma))}\,.
    \end{align}
    At the same time, for every value of $\mu(\sigma(B_i^c))$,
    \begin{align*}
        \mathcal E_{B_i}^\sigma(f,f)   = \frac{1}{2} \sum_{\sigma'(B_i)} \mu^\sigma_{B_i}(\sigma'(B_i))\frac{1}{|B_i|}\sum_{(v,v+1)\in E(B_i)} p_{\si(v),\si(v+1)} (f(\sigma \opl \sigma') - f(\sigma \opl \sigma' \circ (v,v+1)))^2\,,
    \end{align*}
    where $E(B)$ denotes the set of edges (pairs of adjacent positions) in $B$ and $(\si\opl \si')(x) = \begin{cases}
        \si(x), &x\nin B_i\\
        \si'(x), &x\in B_i
    \end{cases}$.
    Plugging into~\eqref{eq:block-dynamics-dirichlet-form-bound}, we overcount edges $(v,v+1)$ at most $\chi=\chi(\mathcal B)$ many times, and thus using the lower bound $\sum_i |B_i| \ge N$, we can bound  
    \begin{align*}
        \mathcal E_{\mathcal B}(f,f) & \le \chi   \Big(  \frac{1}{N} \sum_{\sigma} \mu(\sigma) \sum_{v\in [N-1]} p_{\sigma(v),\sigma(v+1)} (f(\sigma)- f( \sigma\circ (v,v+1))^2\Big)\cdot \max_{i,\sigma(B_i^c)} \gamma^{-1}(\Mnn(B_i^\sigma)) \\ 
        & = \chi \cdot \mathcal E(f,f)  \cdot \max_{i,\sigma(B_i^c)}\gamma^{-1}(\Mnn(B_i^\sigma))\,.
    \end{align*}
    Using $\text{Var}(f)  \le \mathcal E_{\mathcal B}(f,f) / \gamma(P_{\mathcal B})$, we get 
    \begin{align*}
        \frac{\text{Var}(f)}{\mathcal E(f,f)} \le \chi \cdot  \gamma^{-1}(\mathcal{M}_{\mathcal B}) \cdot \max_{i,\sigma(B_i^c)} \gamma^{-1}(\Mnn(B_i^\sigma))\,,
    \end{align*}
    which gives the claimed bound on $\gamma(\Mnn)$ (after inverting everything). 
\end{proof}

\begin{rem}\label{rem:restricted-block-dynamics}
Just as the adjacent transposition chain could be restricted to $\vec{\ell}\loc$ per Definition~\ref{defn:localized}, the block dynamics can be restricted to $\vec{\ell}\loc$ by rejecting any update that takes it out of the set. This restricted block dynamics remains ergodic (as it can simulate any move of the restricted adjacent transposition chain, which was ergodic by Claim~\ref{cl:ooo}), and reversible with respect to the conditional stationary distribution $\mu(\cdot \mid \vec{\ell}\loc)$. 

The same proof of Proposition~\ref{prop:block-dynamics} holds in that context, since this is just a change of the reference measure with respect to which we are making heat-bath updates, and the conditioned measure still satisfies the consistency relation $\mu(\sigma \mid \vec{\ell}\loc) = \mu(\sigma(B_i^c) \mid \vec{\ell}\loc) \mu_{B_i}^\sigma(\sigma(B_i)\mid \vec{\ell}\loc)$ used in the proof. 
\end{rem}

\subsection{Spatial recursion}
\label{sub:spatial-recursion}

We now describe the main spatial recursion that gives us the polynomial mixing time bound. Consider the block dynamics $\Mwe$ with the following blocks: 
\begin{align*}
B_{W}= \{1,...,\lceil 2N/3\rceil \}\,,\qquad B_E = \{\lfloor N/3\rfloor ,...,N\}\,.
    \end{align*}

We first establish fast mixing of the block dynamics.  The following result is a generalization of \Cref{lem:block-mixing-time-simpler}.

\begin{lemma}\label{lem:block-mixing-time}
    There exist constants $C_{\mathsf{block}}>0$ (independent of $\eps$), $C_{\mathsf{block}}'(\eps) = e^{O(1/\eps)}$, $n_0 = O (\log(1/\eps)/\eps)$ and $n_1 = e^{O(1/\eps)}$, such that 
    \begin{itemize}
        \item for all $N\ge n_1$, the inverse gap of the block dynamics restricted to $\vec{\ell}\loc$ for $\vec{\ell}\in \mathcal L_N$ is at most $C_{\mathsf{block}}$;
        \item for all $N\in [n_0,n_1]$, the inverse gap of the block dynamics restricted to $\vec{\ell}\loc$ for $\vec{\ell}\in \mathcal L_N$ is at most $C_{\mathsf{block}}'(\eps)$.
    \end{itemize} 
\end{lemma}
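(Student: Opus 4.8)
The plan is to treat the two scale ranges of the statement separately. \textbf{Small scales $N\in[n_0,n_1]$.} Since $n_1=e^{O(1/\eps)}$, both blocks $B_W,B_E$ and the whole segment $[N]$ have length at most $e^{O(1/\eps)}$. The restricted block dynamics is ergodic and reversible with respect to $\mu(\cdot\mid\vloc)$ by \Cref{rem:restricted-block-dynamics}. Running the proof of \Cref{prop:block-dynamics} with $\mu(\cdot\mid\vloc)$ as the reference measure (here $\chi=2$, since the overlap edges are covered twice and all others once) gives $\gamma(\Mwe)\ge c\cdot\big(\min_{i,\eta}\gamma(\Mnn(B_i^\eta))\big)\cdot\gamma(\Mnn([N]))$ for a universal $c>0$, all chains restricted to the relevant localized set. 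By the base-case argument in the proof of \Cref{thm:poly} --- at each step making the choice that places the two chosen adjacent particles in their preferred order, which by \Cref{cl:ooo} never leaves $\vloc$ --- the inverse gap of (restricted) $\Mnn$ on a segment of length at most $e^{O(1/\eps)}$ is $e^{O(1/\eps)}$, which bounds the right-hand factors and yields inverse gap $\le C'_{\mathsf{block}}(\eps)=e^{O(1/\eps)}$.

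\textbf{Large scales $N\ge n_1$: a coupling.} Couple two copies $\blocksigma_t,\bar\blocksigma_t$ of the restricted block dynamics from arbitrary $\vell$-localized starts; we show the coupling time is $O(1)$, from which the universal constant bound on the inverse gap follows. Two facts drive this. \emph{(i) One update localizes its block.} Conditioned on any $\vell$-localized configuration on $B_i^c$, a single $B_i$-update produces, on $B_i$, a configuration in $\ell_0\loc$ with $\ell_0=C_0(\eps)\log N$ except with probability $\le N^{-10}$: by \Cref{l:relabeling} the resampled law is the pushforward of a biased shuffle on a segment of length $\le N$ with an admissible induced localization, so the conditional form of \Cref{lem:stationary-highprob} applies at that scale. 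Hence after any two consecutive steps of the form $(B_W,B_E)$ or $(B_E,B_W)$ the whole configuration is in $\ell_0\loc$ except with probability $\le 2N^{-10}$ (the first update localizes one block; the second localizes the other while leaving the first block's non-overlapping part, already localized, untouched), and a single update keeps an $\ell_0$-localized configuration in $\ell_0\loc$ except with probability $\le N^{-10}$. \emph{(ii) A $(B_E,B_W)$ pair can close the coupling once both chains are $\ell_0$-localized.} On the $B_E$-update, reveal $\blocksigma(B_E)$ and $\bar\blocksigma(B_E)$ sequentially from the boundary with $W=B_E^c$ inward, searching for a common disconnecting point exactly as in the proof of \Cref{lem:spatial-mixing}: once a common disconnecting point $p$ in the overlap $B_W\cap B_E$ is found, the conditional marginals of $\blocksigma$ and $\bar\blocksigma$ on $\{p+1,\dots,N\}\supseteq B_E\setminus B_W$ coincide, so the remainder is identity-coupled and $\blocksigma(B_E\setminus B_W)=\bar\blocksigma(B_E\setminus B_W)$; then on the following $B_W$-update the two resampling laws on $B_W$ coincide (the chains agree on $B_W^c=B_E\setminus B_W$) and the identity coupling gives $\blocksigma_{t+2}=\bar\blocksigma_{t+2}$. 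A common disconnecting point is found with probability $\ge 1-(1-\rho^2)^{|B_W\cap B_E|/(C'(\eps)\ell_0)}$ for $\rho=e^{-O(1/\eps)}$; since $|B_W\cap B_E|=\Theta(N)$ and $N\ge n_1=e^{O(1/\eps)}$ (the threshold $n_1$ being chosen precisely to absorb the $\eps$-dependence), this is $\ge 99/100$.

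\textbf{Putting it together.} Within any four consecutive steps, with probability $\ge\tfrac1{16}(1-2N^{-10})\cdot\tfrac{99}{100}\ge\tfrac1{20}$, steps $1$--$2$ form a $(B_W,B_E)$ pair (placing both chains in $\ell_0\loc$) and steps $3$--$4$ form a $(B_E,B_W)$ pair whose closing coupling succeeds, after which $\blocksigma_4=\bar\blocksigma_4$. Iterating over disjoint four-step windows, and union-bounding the $\le\poly(N)\cdot N^{-10}$ probability of ever leaving $\ell_0\loc$ over a polynomial horizon, yields an exponential-in-$t$ bound on the probability the chains have not coalesced; hence the restricted block dynamics has $\Tmix=O(1)$ and inverse gap at most a universal constant $C_{\mathsf{block}}$, independent of $\eps$.

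\textbf{The main obstacle.} The delicate step is the disconnecting-point estimate in~(ii): \Cref{lem:spatial-mixing} and \Cref{lem:prob-of-good-point} as stated are governed by $\ell_{\max}(\vell)$, which admissibility permits to be of order $N$, whereas the decay required across the width-$\Theta(N)$ overlap $B_W\cap B_E$ must hold on the logarithmic scale $\ell_0$. The resolution --- the one piece of genuine work beyond the statements already established --- is to note that $\mu(\cdot\mid\blocksigma(B_E^c),\vloc)$ concentrates on $\ell_0\loc$ up to total variation $N^{-10}$ (conditional form of \Cref{lem:stationary-highprob}) and to re-run the Peierls/stochastic-domination argument of \Cref{lem:prob-of-good-point} conditioned further on $\ell_0$-locality, which confines the particles relevant to a candidate disconnecting point at position $p$ to an $O(\ell_0)$-window and hence supplies a fresh success probability $e^{-O(1/\eps)}$ in every window of length $O(\ell_0)$, i.e.\ decay $\exp(-r/(C'(\eps)\ell_0))$. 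This is the general form of \Cref{lem:spatial-mixing}/\Cref{lem:prob-of-good-point} that the argument actually invokes; the remaining ingredients --- the block-by-block burn-in, the identity coupling that closes the argument, and the small-$N$ comparison --- are routine.
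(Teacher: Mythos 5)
Your coupling strategy for large $N$ (revealing for a common disconnecting point across the overlap, then closing on the next block update) is the same as the paper's, and your ``main obstacle'' paragraph is right that the relevant $\ell_{\max}$ must be $\Theta(\log N)$ rather than the ambient $\ell_{\max}(\vell)$. But claim (i) --- that a single $B_i$-update sends $\si(B_i)$ into $\ell_0\loc$ for an arbitrary $\vell$-localized boundary --- is false, and it is the load-bearing step. What \Cref{lem:stationary-highprob} together with \Cref{l:relabeling} gives is that the \emph{relabelled} configuration $R_b\si$ is $\ell_0$-localized; in the original labels $\si(B_i)$ can be arbitrarily far from $\ell_0\loc$ when the set of labels $[N]\setminus b(B_i^c)$ available to $B_i$ is far from the ground-state labels for $B_i$. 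Concretely, for $B=B_W$ and $b(x)=x-\ell_0$ on $B_W^c$ (a valid, $\ell_0$-localized partial assignment, and $\vell$-localized for trivial $\vell$), the label $N$ is among those assigned to $B_W$, so it cannot lie within $\ell_0$ of position $N$: \emph{no} resample on $B_W$ is $\ell_0\loc$. This is precisely why the paper introduces \Cref{lem:restricted-block-dynamics-burn-in}, whose proof goes through the ASEP block-dynamics comparison machinery of \Cref{sec:burn-in} and yields burn-in only after $O(\log N)$ block steps. Your proposal never uses that comparison, and without it the claimed four-step, constant-probability coupling does not hold; the paper instead couples in $O(\log N)$ steps and converts that into an $O(1)$ inverse-gap bound through $d(t)\geq\tfrac12(1-\gamma)^t$. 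Your sketched ``resolution'' in the final paragraph also relies on the same false concentration of $\mu(\cdot\mid\blocksigma(B_E^c),\vloc)$ on $\ell_0\loc$.

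The small-$N$ branch does not deliver the stated $C'_{\mathsf{block}}(\eps)=e^{O(1/\eps)}$. The base-case coupling in the proof of \Cref{prop:polynomial-mixing-time} gives mixing time $e^{\tilde O(N^2)}$ on a segment of length $N$; applied up to $N=n_1=e^{O(1/\eps)}$ this is doubly exponential in $1/\eps$, far worse than $e^{O(1/\eps)}$. (Separately, the inequality $\gamma(\Mwe)\geq c\cdot\min_{i,\eta}\gamma(\Mnn(B_i^\eta))\cdot\gamma(\Mnn([N]))$ is not what \Cref{prop:block-dynamics} gives --- that proposition bounds $\gamma(\Mnn([N]))$ from below, i.e.\ it gives an \emph{upper} bound on $\gamma(\Mwe)$ in terms of $\gamma(\Mnn([N]))$; the correct and simpler Dirichlet-form comparison $\gamma(\Mwe)\geq c\,\gamma(\Mnn([N]))$ does hold, but it would not rescue the exponent.) The paper handles all $N\geq n_0$ with the same burn-in plus spatial-mixing argument, simply paying an $\exp(O(1/\eps))$ factor in the per-step coupling success probability for $N\in[n_0,n_1]$, which is how $C'_{\mathsf{block}}(\eps)=e^{O(1/\eps)}$ actually arises.
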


\begin{proof}
   To show the block dynamics has $C(\ep)$ spectral gap for some $\ep$, it is sufficient  to show that in time $O(C(\ep) \log N)$, the distance to stationarity is at most $N^{-8}$. This is because the exponential decay rate to stationarity is bounded by the spectral gap: see~\cite[Lemma 20.11]{LP}. 

    By Lemma~\ref{lem:restricted-block-dynamics-burn-in}, the restricted block dynamics $\hat X_t$ burns in to $\ell_0 \loc$ where $\ell_0 = C_0(\ep) \log N$ for $C_0(\ep) = O(1/\eps)$: there exists     $C_1$ 
    such that for any $t\ge  C_1  \log N$,
    \begin{align*}
        \mathbb P(\hat X_{t} \notin \ell_0\loc) \le N^{ - 10}\,.
    \end{align*}
    Let $C'(\ep)=\Te(1)$ for $N\ge n_1$, and $C'(\ep)=e^{\Te(1/\ep)}$ for $N\in [n_0,n_1)$ (to match the expressions for $C_{\mathsf{block}}$ and $C_{\mathsf{block}}'(\eps)$). 
    Then by a union bound, the above holds for all times between $C'(\ep) \log N$ and $2C'(\ep) \log N$: 
    \begin{align*}
        \mathbb P(\exists t\in [C'(\eps) \log N, 2C'(\ep)\log N]: \hat X_{t} \notin \ell_0\loc) \le N^{ - 9}\,.
    \end{align*}
    We now bound the coupling time of two restricted block dynamics chains $\hat X_t, \hat X_t'$  from two different initializations. We use the same choices of blocks for the two, and work on the event that both of them are in $\ell_0\loc$ for all $t\in [C'(\eps) \log N, 2C'(\eps) \log N]$. 
    
    Now consider the event that there is an East block update, immediately followed by a West block update. This happens at least $\delta^{-1} \log N$ many times in the time-frame $[C'(\eps) \log N,2C'(\eps) \log N]$ with probability at least $1-N^{-10}$, as long as the $\eps$-independent constant in $C'(\eps)$ is large enough (as a function of $\delta$). On that event, intersected with the above event that $\hat X_t, \hat X_t'\in \ell_0 \loc$ for all such times, 
    \begin{itemize}
        \item When $B_E$ updates, by Lemma~\ref{lem:spatial-mixing}, as long as $r \ge 10 C_0(\ep) \log N$, there exists a coupling of the resampling step such that with probability at least $1- e^{ - r/(C''(\ep) C_0(\ep)\log N)}$ the two configurations agree fully on $B_E\setminus B_W$, where $C''(\ep) = \exp(O\prc{\ep})$. 
        \begin{itemize}
            \item For $N\ge n_1$, with appropriate $n_1=e^{O(1/\ep)}$, this probability is $\Om(1)$.
            \item 
            For $n_0\le  N\le n_1$, with appropriate choice of $n_0=O(\log(1/\ep)/\ep)$, we have $10C_0(\ep)\log N\le N\le n_1$, so this probability is $1-\exp(-\exp(-O(1/\ep))) = \Om(\exp(-O(1/\ep)))$.
        \end{itemize}
        \item On that event, when $B_W$ updates in the next step, the induced distributions by the two chains on $B_W$ given the configurations on $B_E \setminus B_W$ are identical and therefore they are coupled to equal using the identity coupling. 
    \end{itemize}
    Hence for $N\ge n_1$, with probability at least $1-N^{-9}$, the two chains couple after time $O(\log N)$. For $n_0\le N\le n_1$, with probability at least $1-N^{-9}$, the two chains couple after time $\exp(O(1/\ep))\log N$. Choose constants such that these are bounded by $C'(\ep)\log N$.
    This bounds the mixing time to $N^{-9}$ and hence finishes the proof.
\end{proof}

\subsection{Proof of the polynomial bound}

We are now in position to recurse using Lemma~\ref{lem:block-mixing-time} and apply a crude bound for the base case bound to deduce Proposition~\ref{prop:polynomial-mixing-time}. 

\begin{proof}[\textbf{\emph{Proof of Proposition~\ref{prop:polynomial-mixing-time}}}]
We make the following more precise statement which we prove by induction: For all $\Mnn$ on $[N]$ with  $\eps$-positively biased $\vec{p}$, and restricted to any $\vec{\ell}^{(N)}\loc$ for $\vec{\ell}^{(N)}\in \mathcal L_N$, the spectral gap of $\Mnn$ is bounded as follows, where $C$ is $\ep$-independent and $C'(\ep) = e^{O(1/\eps^2)}$. Here, $n_0 = O (\log(1/\eps)/\eps)$ and $n_1 = e^{O(1/\eps)}$.
\begin{enumerate}
    \item For $N\ge n_0$, $\ga(\Mnn)^{-1}\le C'(\ep)N^{1/\ep + C}$.
    \item For $N\ge n_1$, $\ga(\Mnn)^{-1}\le C'(\ep) N^{C}$.
\end{enumerate}
    
    \medskip
    \noindent \emph{Base case}. 
    As in previous lemmas, let $n_0= O(\log (1/\eps)/\eps)$. Then as base cases, for all $N\le 2n_0$, we give a crude bound on the mixing time.     We claim that we can couple any two initializations in time $[0,n_0^2]$ with probability  at least $\exp(-\tilde \Omega(n_0^2))$. Indeed, suppose that the first $N(N-1)$ many updates are exactly $N$ ordered sweeps from right-to-left (this has probability at least $(\frac{1}{N})^{N^2} = e^{ - \tilde O(n_0^2)}$). Furthermore, suppose that every time an edge is selected to be updated, it swaps the particles at its end points if and only if they are out of order. This has probability at least $\frac{1}{2}$ in each step, and therefore $\pf{1}{2}^{N^2}$, conditional on the sequence of updates. The resulting configuration at time $N^2$ will be exactly the ground state $(1,2,...,N)$ in both chains, and they will be thereafter coupled. Since this coupling can be reattempted every $N^2$ many steps, we get a coupling time, and therefore mixing time bound, of $O(N^2) N^{N^2} 2^{N^2} = e^{\tilde O(n_0^2)}$. 

    \medskip
    \noindent \emph{Inductive step}. 
    By \Cref{prop:block-dynamics}, we can bound the inverse spectral gap in terms of the inverse spectral gap of the block dynamics $\mathcal M_\cB$ and of the restriction:
    \begin{align*}
        \gamma(\Mnn)^{-1} \le  2 \ga(\mathcal M_\cB)^{-1}\max_{\xi} \max_{B\in\{ B_W,B_E\}} \gamma(\Mnn(B^{\xi}))^{-1},
    \end{align*}
    where the $2$ comes from $\chi$, and where $\xi$ is a ``boundary condition" meaning a configuration on $[N]\setminus B$. Since $B^{\xi}$ is itself a segment of length $\lceil2N/3\rceil$ with some arbitrary boundary conditions and alphabet available to it, 
    and with $\ep$ still serving as a lower bound on the bias, we can apply Lemma~\ref{l:relabeling} to relabel it so that it is equivalent to an $\eps$-positively biased adjacent transposition chain on $[\ce{2N/3}]$ restricted to some $\vec{\ell}' \in \mathcal L_{\lceil2N/3\rceil}$. 
    Letting $G_N$ be the maximum possible value of $\ga(\Mnn)^{-1}$ for $\ep$-positively biased $\vec p$ and restricted to any ${\vec\ell}^{(N)}\loc$ for ${\vec\ell}^{(N)}\in \cal L_N$. Then substituting in the bounds from \Cref{lem:block-mixing-time}, 
    \begin{align*}
        G_N \le \begin{cases}
            2C_{\mathsf{block}}'(\ep)G_{\ce{2N/3}}, & N \ge n_0\,. \\
            2C_{\mathsf{block}} G_{\ce{2N/3}}, & N\ge n_1\,,
        \end{cases}
    \end{align*}
    Now the inductive statement holds for $N\in [n_0,2n_0]$ by the base case analysis. For $N\ge 2n_0$, the induction step is
    \[
G_{N} \le 
2C_{\mathsf{block}}'(\ep)C'(\ep) \ce{2N/3}^{C_2/\ep+C}
\le C'(\ep) \pf{N}{\ce{2N/3}}^{C_2/\ep + C} \ce{2N/3}^{C_2/\ep+C}
\le C'(\ep) N^{C_2/\ep+C}\,,
    \]
    for appropriate choice of $C_2,C$. This gives that the inductive statement holds for $N\in [n_1,2n_1]$ possibly for a different $C'(\ep)$, still of the same order, by absorbing $(2n_1)^{1/\ep+C}$ into the constant. For $N\ge 2n_1$, the induction step is
    \[
G_{N} \le 2C_{\mathsf{block}} C'(\ep) \ce{2N/3}^C
\le C'(\ep)\pf{N}{\ce{2N/3}}^{C} \ce{2N/3}^C
\le C'(\ep) N^C\,,
    \]
    for appropriate $C$ independent of $\ep$.    
                    \end{proof}

\section{Boosting to sharp mixing time bound}

In this section, we boost the polynomial bound of Proposition~\ref{prop:polynomial-mixing-time} to the optimal $O(n^2)$ mixing time, and prove Theorem~\ref{thm:main}.

\subsection{Fast mixing within the localized set}
Because of Lemma~\ref{lem:single-site-burn-in}, we know that in $C'(\ep) n^2 + O(n \log n)$ time, the adjacent transposition process is burnt-in and gets to a localized configuration, and stays there for large polynomial times. We then need to show that the mixing time restricted to $\ell_0\loc$ is $o(n^2)$.

Consider the restricted Markov chain $\overline{\si}_t$ which rejects any updates that take the configuration outside $\ell_0\loc$ for $\ell_0 = C_0(\ep) \log n$.

\begin{lemma}\label{lem:fast-mixing-within-ell-loc}
    There exist $C(\ep) = e^{O(1/\eps^2)}$, $n_0(\ep) = O(\log (1/\eps)/\eps)$, $C_0(\ep) = O(1/\eps)$ and a constant $C_1>0$ independent of $\eps$ such that the following holds. For all $n \ge n_0(\ep)$, for $\ell_0=C_0(\ep)\log n$, the $\ell_0\loc$ restricted chain $\overline{\si}_t$ has mixing time at most $C(\ep) n (\log n)^{C_1}$. 
\end{lemma}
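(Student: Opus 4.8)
The plan is to prove fast mixing of the $\ell_0\loc$-restricted chain $\overline{\sigma}_t$ by a block-dynamics reduction at the scale $(\log n)^3$, as sketched in the proof overview. Introduce the block dynamics $\M_\cB$ with $\cB=\{B_0,B_1\}$ as defined there: each block is a disjoint union of $\Theta(n/(\log n)^3)$ length-$4(\log n)^3$ segments, and $B_0,B_1$ overlap in length-$(\log n)^3$ stretches, so that $B_0\setminus B_1$ and $B_1\setminus B_0$ lie at distance $(\log n)^3$. We run $\M_\cB$ restricted to $\ell_0\loc$, so that its state space is $\ell_0\loc$ itself and no further burn-in is needed. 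By \Cref{prop:block-dynamics} (in the restricted form of \Cref{rem:restricted-block-dynamics}), using that each position is covered by at most $\chi(\cB)=2$ blocks,
\[
\gamma(\overline{\M})^{-1}\;\le\;2\,\gamma(\M_\cB)^{-1}\,\max_{B\in\{B_0,B_1\}}\max_{\eta}\gamma\big(\overline{\M}(B^{\eta})\big)^{-1},
\]
so it is enough to show $\gamma(\M_\cB)^{-1}=O(1)$ and $\gamma(\overline{\M}(B^{\eta}))^{-1}=C(\ep)\,n(\log n)^{O(1)}$. This already yields $\gamma(\overline{\M})^{-1}=C(\ep)\,n(\log n)^{O(1)}$, which will supply the $\delta$-dependence in \Cref{thm:main}; the mixing-time statement of the present lemma will come from a coupling argument resting on the same two ingredients.

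The structural fact underlying both bounds is that, restricted to $\ell_0\loc$ with $\ell_0=C_0(\ep)\log n\ll(\log n)^3$, the conditional law $\mu(\sigma(B)\in\cdot\mid\sigma(B^c)=\eta,\ell_0\loc)$ factorizes over the constituent segments of $B$. Indeed, consecutive segments of $B$ are separated by gaps of length $\ge 2(\log n)^3\ge 2\ell_0$, which forces the set of particle labels available to any one segment to be a fixed interval determined by $\eta$; these intervals are pairwise disjoint and ordered, and the weight contributed by pairs of positions lying in distinct segments does not depend on the within-segment arrangements. Thus $\overline{\M}(B^{\eta})$ is a product of single-site adjacent-transposition chains on the individual segments, and by \Cref{l:relabeling} each factor is, after relabeling, an $\ep$-positively biased adjacent transposition chain on a segment of length $\Theta((\log n)^3)$ restricted to an admissible $\vec{\ell}\loc$; \Cref{prop:polynomial-mixing-time} then bounds the mixing time of each factor by $C(\ep)(\log n)^{O(1)}$. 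Since in the product chain each segment receives its own updates at rate $\Theta((\log n)^3/n)$, a Chernoff bound together with a union bound over the $\Theta(n/(\log n)^3)$ segments shows that after $C(\ep)\,n(\log n)^{O(1)}$ steps every segment has mixed to within $n^{-\Omega(1)}$; equivalently, tensorization of the spectral gap gives $\gamma(\overline{\M}(B^{\eta}))^{-1}=C(\ep)\,n(\log n)^{O(1)}$.

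For the block dynamics $\M_\cB$, I would couple two copies through one round consisting of a $B_1$-update followed by a $B_0$-update. On the $B_1$-update, couple segment by segment using the factorization above: on each segment $T$ of $B_1$ apply \Cref{lem:spatial-mixing} with $A=T$ and $r=(\log n)^3$ (admissible since $\ell_{\max}=\ell_0=C_0(\ep)\log n\le r$ and $r\le(|T|-\ell_0-1)/2$), so the two resampled marginals on $A_r=T\setminus B_0$ differ in total variation by at most $\exp\big(-(\log n)^3/(C'(\ep)C_0(\ep)\log n)\big)=\exp(-\Omega((\log n)^2))$; a union bound over the $\Theta(n/(\log n)^3)$ segments leaves the two copies agreeing on $B_1\setminus B_0=B_0^c$ with probability $1-n^{-\Omega(1)}$. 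On that event the subsequent $B_0$-update sees identical boundary data $\sigma(B_0^c)$ in both copies, so the identity coupling makes them agree everywhere. Hence $\M_\cB$ couples in $O(1)$ steps and $\gamma(\M_\cB)^{-1}=O(1)$.

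The step I expect to be the main obstacle is converting these two facts into a mixing-time bound for $\overline{\sigma}_t$ itself, rather than merely a spectral-gap bound: the relaxation-time estimate alone is too lossy, since $\log(1/\mu_{\min})$ on $\ell_0\loc$ is of order $n\log n$. The plan is to realize the $O(1)$-round coupling of $\M_\cB$ inside windows of single-site moves: over a window of $C(\ep)\,n(\log n)^{O(1)}$ steps of $\overline{\M}$ during which essentially only moves internal to $B$ are effective (which keeps $\sigma(B^c)$ frozen), the configuration on $B$ reaches within $n^{-\Omega(1)}$ of its conditional equilibrium by the second paragraph. Because the true chain does not censor its boundary moves, the technical heart is to show that over such a window the disagreements created by moves on $\partial B$ cannot propagate across the overlaps between $B_0$ and $B_1$; this is exactly where the buffer of length $(\log n)^3\gg\ell_0$ separating $B_0\setminus B_1$ from $B_1\setminus B_0$, together with the spatial mixing estimate of \Cref{lem:spatial-mixing} applied on each constituent segment, is used. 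With $O(1)$ rounds, each costing $C(\ep)\,n(\log n)^{O(1)}$ single-site steps and incurring $n^{-\Omega(1)}$ error, $\overline{\sigma}_t$ and a stationary copy couple after $C(\ep)\,n(\log n)^{C_1}$ steps, which is the assertion of the lemma (and, combined with \Cref{lem:single-site-burn-in} and \Cref{lem:stationary-highprob}, with \Cref{thm:main}).
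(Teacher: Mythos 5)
Your proposal follows the same strategy as the paper: restrict to $\ell_0\loc$, run the two-block dynamics $\cB=\{B_0,B_1\}$ at scale $\Theta((\log n)^3)$, bound the block-dynamics spectral gap by a coupling together with the segmentwise spatial mixing of \Cref{lem:spatial-mixing}, observe the product structure of $\mu(\cdot\mid\sigma(B^c),\ell_0\loc)$ over the constituent segments of $B$, bound each segment's inverse gap via \Cref{prop:polynomial-mixing-time} and \Cref{l:relabeling}, and combine through \Cref{prop:block-dynamics} in its restricted form. All of those ingredients line up with the paper's proof.

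Where you diverge is in your (correct) worry about the last step, and this is worth dwelling on. The paper's written proof, like the first three paragraphs of your proposal, ends with an inverse \emph{spectral-gap} bound $\gamma^{-1}\le C(\eps)\,n(\log n)^{O(1)}$; it then asserts the mixing-time conclusion. You rightly observe that the naive conversion $\Tmix(\delta)\le\gamma^{-1}\log\bigl(1/(\mu_{\min}\delta)\bigr)$ is too lossy here, since on $\ell_0\loc$ one has $\log(1/\mu_{\min})=\Theta(n\log n)$ (e.g., for the homogeneous chain the number of inversions can be $\Theta(n\ell_0)$ and $|\ell_0\loc|$ is $(2\ell_0+1)^n$), which would degrade the bound to $C(\eps)\,n^2\,\mathrm{polylog}(n)$ and spoil the $\Theta(n^2)$ bound in \Cref{thm:main}. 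So the spectral-gap decomposition of \Cref{prop:block-dynamics} alone does not close the argument, and this step is left implicit in the paper.

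Your proposed patch --- realizing the $O(1)$-round block coupling inside length-$C(\eps)n(\log n)^{O(1)}$ windows of the single-site chain and controlling disagreement propagation across the $B_0/B_1$ overlaps --- targets exactly the right issue, but as you note it is not worked out. The obstruction you identify (that $\overline{\M}$ does not actually freeze $\sigma(B^c)$ during the window) is genuine: the natural tool for pretending it does is the Peres--Winkler censoring inequality, which requires monotonicity of the dynamics, and for general $\eps$-positively biased $\vec p$ the adjacent-transposition chain is not monotone. A cleaner way to patch the paper's route would be to upgrade \Cref{prop:block-dynamics} to a log-Sobolev or modified log-Sobolev decomposition, so that $\Tmix\le\alpha^{-1}\bigl(\log\log(1/\mu_{\min})+\log(1/\delta)\bigr)$ with $\log\log(1/\mu_{\min})=O(\log n)$; but note that even this requires an $O(1)$ log-Sobolev (or entropy-decay) constant for the restricted block dynamics, which does not follow merely from its $O(1)$ coupling/mixing time when $\mu_{\min}$ is small, so it too needs an additional argument. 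In short, your spectral-gap half of the proposal matches the paper; the remaining conversion to a mixing-time statement is a real gap that your sketch flags but does not resolve, and it appears to be a gap in the paper's written proof as well.

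One minor quantitative point worth fixing in your second paragraph: the tensorization for $\Mnn(B^\eta)$ gives inverse gap (number of segments) $\times$ (worst segment inverse gap) $= \Theta\bigl(n/(\log n)^3\bigr)\cdot C(\eps)(\log n)^{O(1)}$, not literally $n$ times the segment inverse gap; this changes only the exponent $C_1$, not the conclusion.
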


\begin{proof}
    We prove this utilizing a block dynamics restricted to the set $\ell_0\loc$ which we will now denote by $\bar \Omega$. 
    Let $M = e^{\Te(1/\ep)}(\log n)^3$ with appropriate constants to be specified implicitly later in the proof.
    Let $\hat X_t$ be the restricted block dynamics with the following two blocks: 
    \begin{enumerate}
\item $B_{0}=\bigcup_{i\geq 0} (6iM, (6i+4)M]$;
\item $B_{1}=\bigcup_{i\geq 0} ((6i+3)M, (6i+7)M]$.
\end{enumerate}
Observe that except at the ends, for the $i$-th segment of $B_0$, its left-most and right-most stretches of length $M$ are subsets of $B_1$. 

Recall from Lemma~\ref{lem:restricted-block-dynamics-burn-in} that the restricted block dynamics means that when one of these blocks, say $B_0$ is  updated, the configuration is resampled on $B_0$ conditioned on the configuration on $B_1\setminus B_0$ as well as on  remaining in $\bar \Omega$. 

    By Proposition~\ref{prop:block-dynamics}, we need to bound the inverse gap of the restricted block dynamics, and the inverse gap of the restricted adjacent transposition shuffle within a block.

    \medskip
    \noindent \emph{Block dynamics spectral gap}. We establish an ($\eps$-independent) $O(1)$ bound on the mixing time of the restricted block dynamics, which implies the same bound on the inverse spectral gap. Consider any two initial states $\sigma, \sigma'$ both in $\ell_0\loc$. 
    Let $T$ be the first time for an update of $B_0$ followed by an update of $B_1$ at time $T+1$.  For any $\delta>0$, observe that $T\le C(\delta)=O(1)$ with probability at least $ 1-\delta$.
    
    First observe that when updating $B_0$ (at time $T$), the update within each segment $(6i M, (6i+4)M]$ is independent of each other.  
    By Lemma~\ref{lem:spatial-mixing}, because $M$ is greater than the required buffer of $\ell_0$, for any $i$, for any fixed configurations $\sigma,\sigma'$ on $(6i M, (6i+4)M]^c$, we can couple so that $\sigma([(6i+1)M, (6i+3)M])\equiv \sigma'([(6i+1)M, (6i+3)M])$ with probability $\geq 1-\exp(- M/C'(\ep)\ell_0)$, where $C'(\ep)=e^{O(1/\ep)}$.
        Therefore, by a union bound over $i$ we have the following:
       $$\mathbb P(\sigma_{T+1}( B_0 \setminus B_1) \not \equiv \sigma'_{T+1}(B_0 \setminus B_1))\le n \cdot e^{ - M/C'(\ep) \ell_0}= e^{-C''(\log{n})^2}\,$$ 
       for some absolute constant $C''$.
       On the next update at time $T+1$, which is on $B_1$, assuming the above coupling event occurred (i.e., they agree on $B_0\setminus B_1$) then the two chains have the same configuration on the complement of $B_1$ and thus we can use the identity coupling to obtain $\sigma_{T+2}(B_1)=\sigma'_{T+2}(B_1)$ and hence $\sigma_{T+2}=\sigma'_{T+2}$. This bounds the mixing time of the block dynamics by an $\eps$-independent constant $C$.

    \medskip
    \noindent 
    \emph{Spectral gap in a block}. We first observe that restricted to $\ell_0\loc$, the possible particles that could be in a constituent segment of $B_0$ and the possible particles that could be in another constituent segment of $B_0$ are disjoint. Therefore, the $\ell_0\loc$-restricted measure on $B_0$ is a product distribution over its distinct segments. The adjacent transposition shuffle on  $B_0$ is also a product chain across the disjoint segments, and therefore the spectral gap of $B_0$ tensorizes to give $n$ times the minimal spectral gap across the segments $((6iM, (6i + 4)M])_i$ with worst-case boundary conditions in $\ell_0\loc$, and restricted to $\ell_0\loc$ inside the segment. 

    By Lemma~\ref{l:relabeling}, up to a relabeling of the particle labels, this is equivalent to an $\eps$-positively biased adjacent transposition chain $\Mnn([4M])$ on $[4M]$ restricted to some $\vec{\ell}^{(4M)} \in \mathcal L_{4M}$. In particular, the bound of Proposition~\ref{prop:polynomial-mixing-time} can be applied to bound its mixing time and inverse spectral gap by $C'(\ep) (4M)^{C_2}=C'(\ep) (e^{O(1/\ep)}\log^3 n)^{C_2}$ for a $C_2$ that is $\eps$-independent. 

    \medskip
    Applying Proposition~\ref{prop:block-dynamics} we get the bound on the  inverse spectral gap of the $\ell_0\loc$-restricted adjacent transposition shuffle $\overline{\sigma}_t$ by 
    \begin{align*}
        \gamma^{-1} \le 2 C'(\ep) C_1 n (\log n)^{3C_2}\,,
    \end{align*}
    where the factor $2$ is from $\chi$ and the factor $n$ comes from the tensorization in discrete time of the gap of $B_0, B_1$. 
        \end{proof}

\subsection{Proof of main theorem}

We can now combine Lemma~\ref{lem:fast-mixing-within-ell-loc} with the burn-in result for $\Mnn$ from Lemma~\ref{lem:single-site-burn-in}, and a simple lower bound, to deduce our main result.  

\begin{proof}[\textbf{\emph{Proof of Theorem~\ref{thm:main}}}]
    \emph{Upper bound}.     Let $(\bar \si_t)_{t\ge 0}$ be the adjacent transposition shuffle restricted to $\ell_0\loc$. Let $\hat \si_t$ be the chain that follows the unrestricted $\Mnn$ chain $\si_t$ for time $T_0:=C_\eps n^2$, and then starting from $\hat \si_{T_0} = \si_{T_0}$, follows the restricted chain $\hat \si_{T_0 + s} = \bar \si_s$ with $\bar \si_0 = \hat \si_{T_0}$, coupled so that $\hat \si_{T_0+s}$ agrees with $\si_{T_0+s}$ on the event that $(\si_{T_0+r})_{r\le s}$ is entirely in $\ell_0\loc$. 
    
    By this coupling, for $t = T_0 + s$ and $s\le n^3 - C_\ep n^2$, we get for any event $A \subseteq S_n$, 
    \begin{align*}
        \max_{x_0} \mathbb P_{x_0} (\si_t \in A) \le  \max_{x_0}  \mathbb P_{x_0}\Big(\bigcup_{t\in [T_0 , n^3]} \{\si_t\notin \ell_0\loc\}\Big) + \max_{y_0 \in \ell_0\loc} \mathbb P_{y_0}( \bar \si_{s} \in A)\,.
    \end{align*}
    By Lemma~\ref{lem:fast-mixing-within-ell-loc}, if $s\ge C(\ep) n (\log n)^{C_1+1}$ for $C(\ep) = e^{O(1/\ep^2)}$, $C_1=O(1/\ep)$, then the second term is at most 
    $$\mu(A \mid \ell_0\loc) + n^{-10}\le \mu(A) + 3n^{-10}\,,$$
    where we use Lemma~\ref{lem:stationary-highprob} to remove the conditioning. 
    By Lemma~\ref{lem:single-site-burn-in} and a union bound over $t\in [T_0,n^3]$, the first term is at most $n^{-7}$, yielding 
    \begin{align*}
        \max_{x_0} \mathbb P_{x_0} (\si_t \in A) \le \mu(A) + 4n^{-7}\,.
    \end{align*}
    The same bound holds for $A^c$. Hence for any $\de$, for large enough $n$, we achieve total variation distance of $\delta$ in time $T_0 + C(\ep) n(\log n)^{C_1+1}\log (1/\de)$, giving the upper bounds for the mixing time as well as for pre-cutoff. 

\medskip
\noindent 
\emph{Lower bound}.  For the lower bound, we use a test statistic, which is the location of particle~$1$. Consider the initialization that is fully inverted, $\sigma_0 = (n, n-1, ..., 1)$. The location of particle~$1$, i.e., $\sigma^{-1}(1)$ is stochastically to the right of the process which flips a $\text{Ber}\prc{n-1}$ coin to decide whether or not to move, and if it comes up heads, then it deterministically moves to the left. It is straightforward to see by a Chernoff bound that for any $\eta>0$, this latter process takes time at least $(1-\eta) n^2$ to have $\Omega(1)$ probability of particle $1$ being in the interval $[\sqrt n]$. Therefore, by the stochastic domination, 
\begin{align*}
    \mathbb P(\sigma^{-1}_{(1-\eta)n^2}(1) \in [\sqrt n])  = o(1)\,.
\end{align*}
On the other hand, by Lemma~\ref{lem:stationary-highprob}, the event $\sigma^{-1}(1) \in [\sqrt n]$ has probability $1-o(1)$ under stationarity as long as $\fc{\sqrt{n}}{\log n} \gg 1/\eps$. This implies for every $\eta>0$, a lower bound of $\Tmix(1-\delta) \ge (1-\eta) n^2$ for all $\delta>0$ and all $n =\wt \Omega(1/\eps^2)$. 
\end{proof}

\subsection*{Acknowledgments}
This work originated from conversations at the Rocky Mountain Summer Workshop in Algorithms, Probability, and Combinatorics at Colorado State University. The research of R.G.\ is supported in part by NSF CAREER grant 2440509 and NSF DMS grant 2246780. The research of E.V.\ is supported by NSF grant CCF-2147094.

\bibliographystyle{alpha}
\bibliography{refs}

\end{document}